\newcommand{\DG}{\mathrm{DG}}
\newcommand{\mesh}{\mathcal{E}}
\newcommand{\aelip}{a_{\mathrm{ellip}}}
\newcommand{\err}{\bm{\tilde{e}}_h^n}
\newcommand{\errn}{\bm{\tilde{e}}_h^{n+1}}
\newcommand{\Bk}{\color{black}}
\newcommand{\bfX}{\bm X}
\newcommand{\vertiii}[1]{{\left\vert\kern-0.25ex\left\vert\kern-0.25ex\left\vert #1 
    \right\vert\kern-0.25ex\right\vert\kern-0.25ex\right\vert}}%
\begin{document}
    
    \title{Improved a priori error estimates for a discontinuous Galerkin pressure correction scheme for the Navier-Stokes equations} 
    
    
    
    \author{Rami Masri  \and
            Chen Liu \and 
            Beatrice Riviere}

    \institute{ R. Masri \at 
                Department of Computational and Applied Mathematics, Rice University \\
                \email{rami.masri@rice.edu} \\ \\
                C. Liu \at 
                Department of Mathematics, Purdue University \\
                \email{liu3373@purdue.edu} \\ \\
                B. Riviere \at 
                Department of Computational and Applied Mathematics, Rice University \\
                \email{riviere@rice.edu} \\ \\
              }
    
    \date{\today}
    \titlerunning{DG pressure correction scheme}
    \authorrunning{R.Masri, C.Liu, B.Riviere}
    \maketitle

    \begin{abstract}
The pressure correction scheme is combined with interior penalty discontinuous Galerkin method to solve the time-dependent
Navier-Stokes equations.  Optimal error estimates are derived for the velocity in the L$^2$ norm in time and in space. 
Error bounds for the discrete time derivative of the velocity and for the pressure are also established. 
    \keywords{Discontinuous Galerkin \and Pressure correction \and Time-dependent Navier--Stokes \and Error estimates }
    \subclass{65M12 \and 65M15 \and  65M60}
    \end{abstract}
    
    \section{Introduction}\label{sec:intro}
    This paper contains the derivation of a priori error estimates for the velocity and the pressure of a splitting scheme for solving the
incompressible Navier-Stokes equations. This work  follows up \cite{inspaper1} where the scheme is introduced and stability is proved.
In \cite{inspaper1}, a priori error estimates for the velocity in L$^2$ in time and in the broken H$^1$ space are obtained;
these error bounds are optimal in space but suboptimal in time.  The main goal of the current paper is to obtain optimal error
estimates for the velocity in the L$^2$ norm in time and space. After deriving error bounds for the discrete derivative of the velocity, 
we also show error estimates for the pressure that are optimal in space and suboptimal in time.

The type of pressure-correction splitting scheme used in this work, belongs to the class of operator splitting schemes that
decouple the nonlinearity in the momentum equation from the incompressibility constraint.
An overview of such splitting schemes can be found in the work by Guermond, Minev, and Shen \cite{guermond2006overview} (see also the comprehensive text \cite{Glowinski2003}). Splitting schemes are known to be computationally efficient for large scale problems.
Error analysis of pressure correction schemes combined with continuous finite element method has been carried out in several papers \cite{guermond1998approximation,guermond1998stability,nochetto2005gauge}. 

While  there are several computational works on discontinuous Galerkin schemes combined with the pressure correction splitting technique 
\cite{krank2017high,PiatkowskiMuthingBastian2018,liu2019interior}, the only theoretical convergence paper in the literature is \cite{inspaper1}. 
In this second paper, we improve upon the error estimate obtained for the velocity by employing a duality argument
and assuming convexity of the domain.  The discrete velocities belong to the space of discontinuous polynomials of degree $k$ and the
discrete pressure and potential belong to the space of discontinuous polynomials of degree $k-1$.  The theoretical rates
of convergence for the velocity are shown to be order one in time and order $k+1$ in space.  For the pressure error bounds, the
rate is $k$ in space and $1/2$ in time, which is consistent with the results obtained for the continuous finite element method
\cite{nochetto2005gauge}. 

This paper is organized as follows. Section \ref{sec:model_pb} introduces the model problem and the fully discrete scheme. The improved error estimate for the velocity is obtained in Section \ref{sec:improved_estimate}. We show error estimates for the discrete time derivative of the velocity in Section \ref{sec:time_derr_velocity} and for the pressure in Section \ref{sec:pressure_estimate}. 

    \section{Model problem and fully discrete scheme}\label{sec:model_pb}
Let $\Omega$ be an open bounded and convex polyhedral domain in $\mathbbm{R}^d$ where $d = 2$ or $3$.  
Consider the incompressible Navier-Stokes equations in $\Omega$ over the time interval $[0,T]$. 
\begin{alignat}{2}
\partial_t \bm{u}  - \mu \Delta \bm{u}+ \bm{u}\cdot \nabla \bm{u}+ \nabla p &= \bm{f},  && \quad\mathrm{ in} \,\, \Omega \times (0,T], \label{eq:first_eq_NS}\\ 
\nabla \cdot \bm{u} & = 0, &&\quad \mathrm{in} \,\, \Omega \times (0,T], \\  
\bm{u} & = \bm{u}^0,  &&\quad \mathrm{in} \,\, \Omega \times \{0\}, \\ 
\bm{u} & = \bm{0},&& \quad \mathrm{on} \,\, \partial \Omega \times (0,T], 
\label{eq:dirichlet_bc}   \\ 
\int_{\Omega} p(t) & =0, && \quad \forall t \in (0,T]. \label{eq:zero_avg}
\end{alignat}
In the above system, $\bm{u}$ is the fluid velocity, $p$ is the pressure, $\bm{f}$ is the external force, and $\mu>0$ is the viscosity. 
For a given non-negative integer $m$ and real number $r \geq 1$, the Sobolev space  $W^{m,r}(\mathcal{O})$ on a domain $\mathcal{O} \subset \mathbbm{R}^d$ is equipped with the usual Sobolev norms and semi-norms  $ \| \cdot \|_{W^{m,r}(\mathcal{O})} $ and $| \cdot |_{W^{m,r}(\mathcal{O})}$ respectively. If $r=2$, we denote $H^m(\mathcal{O}) = W^{m,r}(\mathcal{O})$,  $\| \cdot \|_{H^m(\mathcal{O})} = \|\cdot\|_{W^{m,2}(\mathcal{O})}$ and $|\cdot |_{H^m(\mathcal{O})} = |\cdot |_{W^{m,2}(\mathcal{O})}$. 
The $L^2$ inner-product over $\Omega$ is denoted by $(\cdot,\cdot)$ and the resulting $L^2$ norm by $\Vert \cdot\Vert$.
We assume that $\bm{u}^0 \in H^1_0(\Omega)^d$ and $\nabla \cdot \bm{u}^0 = 0$. 

Let $\mesh_h = \{E_k\}$ denote a family of regular and uniform partitions of the domain  $\Omega$ \cite{ciarlet2002finite}. 
We define the following broken Sobolev spaces.
\begin{alignat}{3}
    \bm{X} &= \{\bm{v}  \in L^2(\Omega)^d: && \quad \forall E \in \mesh_h,&& \quad \bm{v} \vert_E \in W^{2,4/3}(E)^d  \}, \\ 
    M & = \{ q \in L^2(\Omega): && \quad \forall E \in \mesh_h, && \quad q\vert_E \in W^{1,4/3} (E)\}.
\end{alignat}  
Let $h = \max_{E \in \mesh_h} h_E$ where $h_E   = \mathrm{diam}(E)$ . Denote by $\Gamma_h$ the set of all interior faces of the subdivision $\mesh_h$.  For an interior edge $e \in \Gamma_h$, we associate a normal $\bm{n}_e$ and we denote 
by $E_e^1$ and $E_e^2$ the two elements that share $e$, such that $\bm{n}_e$ points from $E_e^1$ to $E_e^2$. 
Define the average and jump for a function $\bm{\theta} \in \bfX$ as such, 
\begin{align}
\{ \bm{\theta} \}  = \frac12 (\bm{\theta}|_{E_e^1} + \bm{\theta}|_{E_e^2}), \quad 
[\bm{\theta}]   = \bm{\theta}|_{E_e^1} - \bm{\theta}|_{E_e^2}, \quad \forall e = \partial E_e^1 \cap \partial E_e^2. 
\end{align}
For a boundary face, $e \in \partial \Omega$, the vector $ \bm{n}_e$ is chosen as the unit outward vector to $\partial \Omega$. The definition of the average and jump in this case are extended as such, 
\begin{equation} 
\{ \bm{\theta} \} = [\bm{\theta}] = \bm{\theta}|_{E_e}, \quad \forall e = \partial E_e \cap \partial \Omega. 
\end{equation}
Similar definitions are used for scalar valued functions, $q \in M$. For the convection term, we use the same discretization form, $a_\mathcal{C}$, as in \cite{girault2005discontinuous}.  Denote by $\bm{n}_E$ the outward normal to $E$, and denote by $\bm{v}^\mathrm{int}$ and $\bm{v}^\mathrm{ext}$ the trace of a function $\bm{v}$
on the boundary of $E$ coming from the interior (resp. exterior) of $E$. By convention, $\bm{v}^\mathrm{ext}|_e = {\bf 0}$ if $e$ is a boundary face $(e\subset\partial\Omega)$. 
We also introduce the notation for the inflow boundary of $E$ with respect to a function $\bm{z}$: 
\begin{equation}
    \partial E_{-}^{\bm{z}} = \{ \bm{x} \in \partial E: \{\bm{z}  (\bm{x}) \}\cdot \bm{n}_E < 0 \}, \quad \bm{z} \in \bfX.
    \label{eq:inflow_boundary}
\end{equation}
With this notation, we have for $\bm{z}, \bm{w}, \bm{v}, \bm{\theta} \in \bfX$, 
\begin{multline}
a_\mathcal{C}(\bm{z};\bm{w}, \bm{v}, \bm{\theta}) = \sum_{E \in \mesh_h} \left( \int_E (\bm{w} \cdot \nabla \bm{v})\cdot \bm{\theta}  + \frac12 \int_E (\nabla \cdot \bm{w}) \, \bm{v} \cdot \bm{\theta} \right) \\ 
- \frac{1}{2} \sum_{e\in\Gamma_h \cup \partial \Omega} \int_{e} [\bm{w}] \cdot \bm{n}_e \{ \bm{v} \cdot \bm{\theta} \} + \sum_{E \in \mesh_h} \int_{\partial E_{-}^{\bm{z}}} | \{\bm{w}\} \cdot \bm{n}_E| (\bm{v}^{\mathrm{int}} - \bm{v}^{\mathrm{ext}}) \cdot  \bm{\theta}^{\mathrm{int}}.
\end{multline}
The form $a_\mathcal{C}$ admits an ``integration by parts" formula \cite{girault2005discontinuous,girault2005splitting}. To this end, define for $\bm{z}, \bm{w}, \bm{v}, \bm{\theta} \in \bfX$, the form $\bar{a}_{\mathcal{C}}$ as: 
\begin{multline}
\bar{a}_\mathcal{C}(\bm{z}; \bm{w}, \bm{\theta}, \bm{v}) = \sum_{E \in \mesh_h}   \left( \int_E (\bm{w} \cdot \nabla \bm{\theta} ) \cdot \bm{v} + \frac{1}{2}\int_E (\nabla \cdot \bm{w} ) \bm{\theta} \cdot \bm{v} \right)\\ - \frac{1}{2} \sum_{e \in \Gamma_h \cup \partial \Omega} \int_e [\bm{w}] \cdot \bm{n}_e \{\bm{\theta} \cdot \bm{v} \} 
+\sum_{E \in \mesh_h } \int_{(\partial E_{-}^{\bm{z}}) \backslash (\partial \Omega)} |\{ \bm{w}\} \cdot \bm{n}_E  | (\bm{\theta}^{\mathrm{int}} - \bm{\theta}^{\mathrm{ext}}) \cdot \bm{v}^{\mathrm{ext}}\\ - \frac{1}{2} \sum_{e\in\partial \Omega }\int_e (|\bm{w} \cdot \bm{n}_e| - \bm{w}\cdot \bm
{n}_e)  \bm{\theta} \cdot \bm{v}. 
  \end{multline}
The following holds (see (1.16) and (1.17) in \cite{girault2005splitting} and Lemma 6.1 in \cite{girault2005discontinuous}):
\begin{equation} 
\label{eq:integration_by_parts_c} 
a_{\mathcal{C}}(\bm{w}; \bm{w}, \bm{v},\bm{\theta}) = -\bar{a}_{\mathcal{C}}(\bm{w}; \bm{w}, \bm{\theta} , \bm{v}), \quad \forall \bm{w}, \bm{v}, \bm{\theta} \in \bm{X}.
\end{equation} 
We recall the positivity property satisfied by $a_\mathcal{C}$ (see (1.18) in \cite{girault2005discontinuous}):
\begin{equation}\label{eq:cpositivity}
a_\mathcal{C}(\bm{w};\bm{w};\bm{v},\bm{v}) \geq 0, \quad \forall \bm{w}, \bm{v} \in \bfX.
\end{equation}
In the analysis below, we will use properties of the form $a_\mathcal{C}$ and it helps to define the following forms.
For $\bm{z}, \bm{w}, \bm{v}, \bm{\theta} \in \bm{X}$, we write:
\begin{align}
\mathcal{C}(\bm{w},  \bm{v}, \bm{\theta}) = &\sum_{E \in \mesh_h}   \left( \int_E (\bm{w} \cdot \nabla \bm{v} ) \cdot \bm{\theta} + \frac{1}{2}\int_E (\nabla \cdot \bm{w} ) \bm{v} \cdot \bm{\theta} \right)  \nonumber\\ 
& - \frac{1}{2} \sum_{e \in \Gamma_h \cup \partial \Omega} \int_e [\bm{w}] \cdot \bm{n}_e \{\bm{v} \cdot \bm{\theta} \},
\nonumber\\
    \mathcal{U}(\bm{z}; \bm{w}, \bm{v}, \bm{\theta}) = &\sum_{E \in \mesh_h } \int_{\partial E_{-}^{\bm{z}}} \{ \bm{w}\} \cdot \bm{n}_E(\bm{v}^{\mathrm{int}} - \bm{v}^{\mathrm{ext}})\cdot \bm{\theta}^{\mathrm{int}}. \label{eq:split_ac1}
\end{align}
Therefore, we have
\begin{equation}
a_{\mathcal{C}}(\bm{w}; \bm{w}, \bm{v}, \bm{\theta})  = \mathcal{C}(\bm{w}, \bm{v}, \bm{\theta})-\mathcal{U}(\bm{w}; \bm{w}, \bm{v}, \bm{\theta}). \label{eq:split_ac2}
\end{equation}
With this notation, for any $\bm{u},\bm{w},\bm{v}_1, \bm{v}_2, \bm{\theta} \in \bm{X}$, we have: 
\begin{align}
a_\mathcal{C}(\bm{u};\bm{u}, \bm{v}_1, \bm{\theta})&- a_\mathcal{C}(\bm{w};\bm{w}, \bm{v}_2, \bm{\theta})=a_\mathcal{C}(\bm{w};\bm{w}, \bm{v}_1-\bm{v}_2, \bm{\theta}) + \mathcal{C}(\bm{u}-\bm{w},\bm{v}_1, \bm{\theta})\nonumber \\ & \quad  - \mathcal{U}(\bm{w}; \bm{u}-\bm{w},\bm{v}_1, \bm{\theta})   -(\mathcal{U}(\bm{u};\bm{u},\bm{v}_1, \bm{\theta}) - \mathcal{U}(\bm{w}; \bm{u},\bm{v}_1,\bm{\theta})). \label{eq:splitting_technique}
\end{align}
We use the symmetric interior penalty dG for the elliptic operator $-\Delta \bm{v}$ \cite{riviere2008discontinuous}. For $\bm{v}, \bm{\theta} \in \bfX$, 
\begin{multline}
a_{\mathcal{D}}(\bm{v}, \bm{\theta}) = \sum_{E \in \mesh_h} \int_E \nabla \bm{v} : \nabla \bm{\theta} 
- \sum_{e\in\Gamma_h \cup \partial \Omega} \int_e \{\nabla_h \bm{v}\}\bm{n}_e \cdot [\bm{\theta}]  \\ 
- \sum_{e \in \Gamma_h \cup \partial \Omega} \int_e \{\nabla_h \bm{\theta} \}\bm{n}_e\cdot [\bm{v}] 
+  \sum_{e\in \Gamma_h \cup \partial \Omega} \frac{\sigma}{h_e} \int_{e} [\bm{v}]\cdot [\bm{\theta}].
\label{eq:eliptic_form_v}
\end{multline} 
In the above form, $h_e =  |e|^{1/(d-1)}$, $\sigma > 0$ is a user specified penalty parameter, and $\nabla_h$ denotes the broken gradient operator . The discretization for the term $ - \nabla p$ is given as follows. For $\bm{\theta} \in \bfX$ and $q \in M$, define  
\begin{align}
b(\bm{\theta}, q) &= \sum_{E \in \mesh_h }\int_E (\nabla \cdot \bm{\theta}) \, q  - \sum_{e \in \Gamma_h \cup \partial \Omega} \int_e \{q\}[\bm{\theta}]\cdot \bm{n}_e  \label{eq:form_b}  \\
& = -\sum_{E \in \mesh_h} \int_E \bm{\theta} \cdot \nabla q  + \sum_{e\in \Gamma_h} \int_e \{ \bm{\theta} \}\cdot \bm{n}_e [q].  \label{eq:equiv_form_b}
\end{align}
To approximate $\bm{u}$ and $p$, we introduce discrete function spaces $\bm{X}_h \subset \bm{X}$ and  $M_{h0}\subset M_h \subset M $. For any integer  $k \geq 1$:
\begin{alignat}{2}
\bfX_h &= \{ \bm{v}_h \in (L^2(\Omega))^d: &&\quad \forall E \in \mesh_h,\quad \bm{v}_h \vert_{E} \in (\mathbbm{P}_{k}(E))^d\}, \\
M_h &= \{ q_h \in L^2(\Omega):&& \quad  \forall E \in \mesh_h, 
\quad q_h \vert_E \in \mathbbm{P}_{k-1}(E)\}, \\
M_{h0}&=\{ q_h \in M_h:  && \quad \int_\Omega q_h = 0\}. 
\end{alignat}
In the above, for $n \in \mathbbm{N}$, $\mathbbm{P}_n(E)$ denotes the space of polynomials of degree at most $n$. 
To discretize the elliptic operator $-\Delta \phi$, we define for $\phi_h, q_h\in M_h$,
\begin{multline}
    \aelip(\phi_h,q_h) = \sum_{E \in \mesh_h} \int_E \nabla \phi_h \cdot \nabla q_h - \sum_{e \in \Gamma_h} \int_e \{\nabla_h \phi_h \}\cdot \bm{n}_e [q_h] \\ 
   -  \sum_{e\in \Gamma_h} \int_e \{ \nabla_h q_h\}\cdot \bm{n}_e [\phi_h] +  \sum_{e\in \Gamma_h} \int_e \frac{\tilde{\sigma}}{h_e}[\phi_h] [q_h]. 
\end{multline}
Here, $\tilde{\sigma}>0$ is a penalty parameter that will be specified later. \\
We now present the fully discrete scheme by
partitioning the time interval $(0,T]$ into $N_T$ subintervals with equal size $\tau$. Throughout the paper, we use the notation $\bm{g}^n = \bm{g}(t^n)$ and $q^n = q(t^n)$ for  given functions $\bm{g}$ and $q$. We start by setting $p_h^0 = \phi_h^0 = 0$ and letting $\bm{u}_h^0$ be the  $L^2$ projection of $\bm{u}^0$ onto $\bm{X}_h$.  
For $n =1, \ldots, N_T$, given $(\bm{u}_h^{n-1}, p_h^{n-1}) \in \bfX_h \times  M_h$, compute $\bm{v}_h^n \in \bfX_h$ such that for all $\bm{\theta}_h \in \bfX_h$, 
\begin{multline}
    (\bm{v}_h^n, \bm{\theta}_h) + \tau a_{\mathcal{C}}(\bm{u}_h^{n-1};\bm{u}_h^{n-1}, \bm{v}_h^n, \bm{\theta}_h) + \tau \mu a_\mathcal{D}(\bm{v}_h^n, \bm{\theta}_h ) = (\bm{u}_h^{n-1}, \bm{\theta}_h) \\ + \tau b(\bm{\theta}_h,p_h^{n-1}) + \tau (\bm{f}^{n}, \bm{\theta}_h). \label{eq:intermidiate_velocity}
\end{multline}
Next, compute $\phi_h^n \in  M_{h0} $ such that for all $q_h \in M_{h0}$,
\begin{equation}
a_{\mathrm{ellip}}(\phi_h^n, q_h) = - \frac{1}{\tau} b(\bm{v}_h^n, q_h). \label{eq:pressure_correction}
\end{equation} 
Finally, compute $p_h^n \in M_h$ and  $\bm{u}_h^n \in \bfX_h$ such that for all $q_h \in M_h$ and for all  $\bm{\theta}_h \in \bfX_h$, 
\begin{align}
    (p_h^n, q_h) & = (p_h^{n-1}, q_h) + (\phi^n_h, q_h) - \delta \mu b(\bm{v}_h^n, q_h), \label{eq:update_pressure_1} \\ 
    (\bm{u}_h^n, \bm{\theta}_h) & = (\bm{v}_h^n, \bm{\theta}_h) + \tau b(\bm{\theta}_h, \phi_h^n).\label{eq:update_velocity_1}
    \end{align}
Here $\delta$ is a positive constant to be determined in the subsequent sections.
 The unique solvability of above algorithm is proved in \cite{inspaper1}.

For $\bm{\theta}\in \bfX$, define the energy norm as follows: 
\begin{equation}
    \| \bm{\theta}\|^2_{\DG} = \sum_{E \in \mesh_h}  \|\nabla \bm{\theta}  \|^2_{L^2(E)} + \sum_{e \in \Gamma_h \cup \partial \Omega } \frac{\sigma}{h_e} \|[\bm{\theta}]\|^2_{L^2(e)}.
  \end{equation}
  For $q \in M$, the energy semi-norm is defined as such: 
  \begin{equation}
    \vert q_h \vert^2_{\DG} = \sum_{E \in \mesh_h}  \|\nabla q \|^2_{L^2(E)} 
+ \sum_{e \in \Gamma_h } \frac{\tilde{\sigma}}{h_e} \|[q]\|^2_{L^2(e)}.
  \end{equation}
 Clearly, $\vert \cdot \vert_{\DG}$ is a norm for the space $M_{h0}$. We recall the following coercivity properties which hold if $\sigma$ and $\tilde{\sigma}$ are large enough \cite{riviere2008discontinuous}. For all $\bm{\theta}_h \in \bm{X}_h$ for all $q_h \in M_h$:
\begin{alignat}{2}
    a_\mathcal{D} (\bm{\theta}_h, \bm{\theta}_h) \geq \frac{1}{2} \|\bm{\theta}_h \|_{\DG}^2, \quad 
    \aelip(q_h, q_h ) \geq \frac12 | q_h |^2_{\DG}.  \label{eq:coercivity_a_ellip} 
\end{alignat}
In addition, we have the following continuity bound: 
\begin{equation}
| a_{\mathcal{D}}(\bm{\theta}_h , \bm{v}_h) | \leq C \|\bm{\theta}_h \|_{\DG} \|\bm{v}_h\|_{\DG}, \quad \forall \bm{\theta}_h, \bm{v}_h \in \bm{X}_h. \label{eq:continuity_a}
\end{equation}
We also recall the following lift operators \cite{inspaper1},
$R_h: \bfX_h\rightarrow M_h$ and $\bm{G}_h: M_h\rightarrow \bfX_h$:
%
\begin{alignat}{2}
    (R_h([\bm{\theta}_h]),q_h) &= \sum_{e\in \Gamma_h \cup \partial \Omega} \int_e \{ q_h\} [\bm{\theta}_h]\cdot\bm{n}_e,&& \quad \bm{\theta}_h \in \bfX_h, \quad q_h\in M_h,  \label{eq:lift_op_1}\\ 
    (\bm{G}_h([q_h]),\bm{\theta}_h) &= \sum_{e\in \Gamma_h} \int_e \{\bm{\theta}_h\} \cdot\bm{n}_e [q_h],&& \quad \bm{\theta}_h \in \bfX_h, \quad q_h \in M_h. \label{eq:lift_op_2} 
\end{alignat}
%
There exist constants $M_{k-1}, \tilde{M}_{k} > 0$ independent of $h$ but depending on the polynomial degree $k$, such that 
   for $q_h \in M_h$ and $\bm{\theta}_h \in \bfX_h$, the following bounds hold. 
       \begin{align}
       \|R_h([\bm{\theta}_h])\|^2 &\leq M_{k-1}^2  \sum_{e \in \Gamma_h \cup \partial \Omega} h_e^{-1}\|[\bm{\theta}_h]\|_{L^2(e)}^2, \label{eq:lift_prop_r}\\  
       \|\bm{G}_h([q_h])\|^2 &\leq \tilde{M}_{k}^2 \sum_{e \in \Gamma_h} h_e^{-1}\|[q_h]\|_{L^2(e)}^2. \label{eq:lift_prop_g}
       \end{align}
Considering the definitions of the lift operators \eqref{eq:lift_op_1}-\eqref{eq:lift_op_2} and \eqref{eq:form_b}-\eqref{eq:equiv_form_b}, we have
\begin{align}
    b(\bm{\theta}_h, q_h) & = ( \nabla_h   \cdot \bm{\theta}_h, q_h ) - (R_h([\bm{\theta}_h]), q_h), 
\quad \forall \bm{\theta}_h\in\bfX_h, \, \forall q_h \in M_h, \label{eq:def_b_lift} \\ 
    b(\bm{\theta}_h, q_h) & = -(\nabla_h q_h, \bm{\theta}_h) + (\bm{G}_h([q_h]), \bm{\theta}_h),
\quad \forall \bm{\theta}_h\in\bfX_h, \, \forall q_h \in M_h. \label{eq:def_b_lift_2}
 \end{align}
Consequently, using Cauchy-Schwarz's inequality and \eqref{eq:lift_prop_g}, it is easy to show that
\begin{equation}\label{eq:boundformb}
b(\bm{\theta}_h, q_h) \leq C \Vert \bm{\theta}_h \Vert \, \vert q_h \vert_{\mathrm{DG}}, \quad
 \forall \bm{\theta}_h\in\bfX_h, \, \forall q_h \in M_h. 
\end{equation}
 
    \section{Improved error estimate for the velocity}\label{sec:improved_estimate}
Throughout the paper, we denote by  $C$ a generic constant independent of $\mu, h ,$ and $\tau$ and by $C_\mu$ a generic constant independent of $h$ and $\tau$ but depending on $e^{1/\mu}$. These constants may take different values when used in different places.
For a function $\bm{v} \in (W^{1,3}(\Omega) \cap L^{\infty}(\Omega))^d$, define the norm 
    $\vertiii{\bm{v}} = \|\bm{v}\|_{L^{\infty}(\Omega)} + \vert \bm{v}\vert_{W^{1,3}(\Omega)}$.
We begin by recalling the interpolation operators, error equations, and the error and stability estimates derived in \cite{inspaper1}. 
We will make use of the operator $\Pi_h: H^1_0(\Omega)^d \rightarrow \bfX_h$ that satisfies the following \cite{riviere2008discontinuous,chaabane2017convergence}.
\begin{equation}
b(\Pi_h \bm{u} (t), q_h) = b(\bm{u} (t) ,q_h)=0 , \quad \forall q_h \in M_h,~\forall 0\leq t \leq T. \label{eq:def_pi_h}
\end{equation}
This operator satisfies the following stability and approximation properties \cite{chaabane2017convergence}. For $ 0\leq t\leq T$, if $\bm{u}(t) \in (W^{s,r}(\Omega) \cap H_0^1(\Omega) )^d$, we have the global estimates: 
\begin{alignat}{2}
    \|\Pi_h \bm{u}(t)\|_{L^{\infty}(\Omega)} & \leq C \vertiii{\bm{u}(t)}, && \quad r=3, s = 1,  \,\,   \bm{u}(t) \in L^{\infty}(\Omega)^d , \label{eq:stability_linf_pih} \\ 
    \| \Pi_h \bm{u}(t) - \bm{u}(t) \|_{L^r(\Omega)} &\leq Ch^s \vert \bm{u}(t) \vert_{W^{s,r}(\Omega)}, && \quad 1\leq r \leq \infty, \,\,  1 \leq s \leq k+1, \label{eq:approximation_prop_1} \\ 
     \| \Pi_h \bm{u}(t) - \bm{u}(t) \|_{\mathrm{DG}}& \leq C h^{s-1}  \vert \bm{u}(t) \vert_{H^{s}(\Omega)},  && \quad  1\leq r \leq \infty, \,\,  1 \leq s \leq k+1 \label{eq:approximation_prop_2}.
    \end{alignat}
For $E\in\mathcal{E}_h$ and $\bm{u}(t)\in \bfX\cap (W^{s,r}(E)\cap H_0^1(\Omega))^d$
\begin{equation}
\Vert \nabla (\Pi_h \bm{u}(t)-\bm{u}(t))\Vert_{L^r(E)} \leq C h_E^{s-1} \vert \bm{u}(t)\vert_{W^{s,r}(\Delta_E)}
\label{eq:stabgrad}
\end{equation}
where $\Delta_E$ is a macro element that contains $E$. As a corollary, we have
\begin{equation}
\vertiii{\bm{u}-\Pi_h\bm{u}}\leq C \vertiii{\bm{u}}. \label{eq:vertiii}
\end{equation}
\Bk
Let $\pi_h: L^2(\Omega) \rightarrow M_h$ denote the $L^2$ projection. 
For $0\leq t \leq T$ and $p(t) \in H^s(\Omega)$, the following estimate holds. 
\begin{align} \label{eq:l2_proj_approximation}
    \|\pi_h p(t) - p(t) \|_{L^2(E)} + h_E\|\nabla_h(\pi_h p(t) - p(t) )\|_{L^2(E)} \leq Ch_E^{\min(k,s)}\vert p(t) \vert_{H^s(E)}. 
\end{align}
We also recall that the $L^2$ projection is stable in the dG norm \cite{GiraultLiRiviere2016}. For $p(t) \in H^1(\Omega)$, 
\begin{equation}
    | \pi_h p(t)|_{\DG} \leq C | p(t) |_{H^1(\Omega)}.\label{eq:bd_phN}
\end{equation} 
Further, we recall the following Poincare inequality \cite{girault2005discontinuous,lasis2003poincare}. 
\begin{equation}
    \| \bm{\theta} \|_{L^{q}(\Omega)} \leq C_P \| \bm{\theta} \|_{\DG}, \quad \forall \bm{\theta} \in \bm{X}, \label{eq:discreter_poincare}
\end{equation}
where $2\leq q<\infty$ in 2D $(d=2)$ and $2\leq q\leq 6$ in 3D $(d=3)$.  
We proceed by stating the error equations. To this end, define the following discretization errors, $\bm{\tilde{e}}_h^{n}, \, \bm{e}_h^{n}\in\bm{X}_h$:
\begin{align}
\bm{\tilde{e}}_h^{n}= \bm{v}_h^n - \Pi_h \bm{u}^n, \quad \bm{e}_h^{n} = \bm{u}_h^n - \Pi_h \bm{u}^{n},  \quad \forall n\geq 0.  \label{eq:def_error_functions}
\end{align}
We define $\bm{v}_h^0 = \Pi_h \bm{u}^0$, therefore $\tilde{\bm{e}}_h^0 = \bm{0}$. 
To simplify the writeup, we also define  
\begin{align}
R_{\mathcal{C}} (\bm{\theta}_h) & = a_\mathcal{C}(\bm{u}_h^{n-1}; \bm{u}_h^{n-1}, \Pi_h \bm{u}^n, \bm{\theta}_h) -  a_\mathcal{C}(\bm{u}^{n}; \bm{u}^{n}, \bm{u}^n, \bm{\theta}_h), \\ 
R_{t}(\bm{\theta}_h) & = \tau((\partial_t \bm{u})^n, \bm{\theta}_h) + (\Pi_h \bm{u}^{n-1} - \Pi_h \bm{u}^{n}, \bm{\theta}_h).
\label{eq:defRt}
\end{align}
The error equations are, for $n\geq 1$ \cite{inspaper1}.
\begin{align}
    &(\err, \bm{\theta}_h) + \tau a_\mathcal{C}(\bm{u}_h^{n-1};\bm{u}_h^{n-1}, \err, \bm{\theta}_h) +  \tau R_{\mathcal{C}} (\bm{\theta}_h) + \tau \mu a_{\mathcal{D}}(\err, \bm{\theta}_h) = (\bm{e}_h^{n-1}, \bm{\theta}_h) \nonumber \\ & \quad - \tau \mu a_{\mathcal{D}}(\Pi_h \bm{u}^n - \bm{u}^n, \bm{\theta}_h )
 + \tau  b(\bm{\theta}_h, p_h^{n-1} - p^n) + R_t(\bm{\theta}_h),\, \forall \bm{\theta}_h \in \bm{X}_h, \label{eq:first_err_eq} \\ 
 &(\bm{e}_h^n, \bm{\theta}_h)  = (\bm{\tilde{e}}_h^n, \bm{\theta}_h) +  \tau b(\bm{\theta}_h, \phi_h^n ), \, \forall \bm{\theta}_h \in \bm{X}_h, \label{eq:sec_err_eq}\\
 &   \aelip(\phi_h^n, q_h)  = -\frac{1}{\tau} b(\bm{\tilde{e}}_h^n, q_h),  \forall q_h \in M_{h0}, \label{eq:error_pressure_correction}
    \\
 &   (p_h^n,q_h) = (p_h^{n-1},q_h) + (\phi_h^n,q_h) -\delta \mu (\nabla_h \cdot \err - R_h ([\err]), q_h ), \quad
\forall q_h\in M_h.\label{eq:error_update_pressure}
\end{align}
We now recall and establish useful properties for the error functions and for the forms $a_\mathcal{C}$ and $b$. The error function $\bm{e}_h^n$ satisfies the following important property \cite{inspaper1}. 
\begin{lemma}\label{prep:weak_div_error}  
    For all $q_h \in M_h$ and $n\geq 1$,  the following holds. 
    \begin{align}
        b(\bm{e}_h^n, q_h)& = b(\err, q_h) + \tau\aelip(\phi_h^n,q_h) -\tau \sum_{e \in \Gamma_h} \frac{\tilde{\sigma}}{h_e} \int_e [\phi_h^n][q_h] \nonumber \\ & \quad \quad  + \tau(\bm{G}_h([\phi_h^n]), \bm{G}_h([q_h])), \label{eq:div_error} \\ 
        b(\bm{e}_h^n, q_h) & = -\tau \sum_{e \in \Gamma_h} \frac{\tilde{\sigma}}{h_e} \int_e [\phi_h^n][q_h] + \tau(\bm{G}_h([\phi_h^n]), \bm{G}_h([q_h])).  \label{eq:div_error_2}
    \end{align} 
\end{lemma}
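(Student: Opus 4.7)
The plan is to prove \eqref{eq:div_error} first, then deduce \eqref{eq:div_error_2} as an immediate corollary via the error pressure-correction identity \eqref{eq:error_pressure_correction}. For \eqref{eq:div_error} I would start from $b(\bm{e}_h^n, q_h) - b(\bm{\tilde{e}}_h^n, q_h) = b(\bm{e}_h^n - \bm{\tilde{e}}_h^n, q_h)$ and rewrite the right-hand side via the equivalent representation \eqref{eq:def_b_lift_2} as $-(\nabla_h q_h, \bm{e}_h^n - \bm{\tilde{e}}_h^n) + (\bm{G}_h([q_h]), \bm{e}_h^n - \bm{\tilde{e}}_h^n)$. The crucial observation is that both $-\nabla_h q_h$ (piecewise polynomial of degree $k-2$) and $\bm{G}_h([q_h])$ belong to $\bm{X}_h$ and are therefore admissible test functions in the second error equation \eqref{eq:sec_err_eq}. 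Plugging each of them in turn into \eqref{eq:sec_err_eq} converts the two $L^2$ inner products into $-\tau b(\nabla_h q_h, \phi_h^n) + \tau b(\bm{G}_h([q_h]), \phi_h^n)$.

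The next step is to expand each of these two $b$-terms once more using \eqref{eq:def_b_lift_2}. This produces four contributions: the symmetric pieces $\tau(\nabla_h \phi_h^n, \nabla_h q_h)$ and $\tau(\bm{G}_h([\phi_h^n]), \bm{G}_h([q_h]))$, together with two mixed terms which, thanks to the defining identity \eqref{eq:lift_op_2} of $\bm{G}_h$ applied with the admissible test fields $\nabla_h q_h, \nabla_h \phi_h^n \in \bm{X}_h$, can be rewritten as $-\tau \sum_{e \in \Gamma_h} \int_e \{\nabla_h \phi_h^n\}\cdot \bm{n}_e [q_h]$ and $-\tau \sum_{e \in \Gamma_h} \int_e \{\nabla_h q_h\}\cdot \bm{n}_e [\phi_h^n]$. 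Comparing with the definition of $\aelip$, one recognises that these four pieces together assemble exactly $\tau \aelip(\phi_h^n, q_h) - \tau \sum_{e \in \Gamma_h} (\tilde{\sigma}/h_e)\int_e [\phi_h^n][q_h] + \tau (\bm{G}_h([\phi_h^n]), \bm{G}_h([q_h]))$, which gives \eqref{eq:div_error}.

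The identity \eqref{eq:div_error_2} then follows by substituting \eqref{eq:error_pressure_correction} into \eqref{eq:div_error} to cancel $b(\bm{\tilde{e}}_h^n, q_h)$ against $\tau \aelip(\phi_h^n, q_h)$. One subtlety is that \eqref{eq:error_pressure_correction} is posed only for $q_h \in M_{h0}$, while the lemma claims both identities for all $q_h \in M_h$; I would close this gap by checking that for any constant $c$ the quantities $b(\bm{\tilde{e}}_h^n, c)$ and $\aelip(\phi_h^n, c)$ both vanish (the former by an element-wise divergence-theorem computation using the convention $[\bm{\tilde e}_h^n]=\bm{\tilde e}_h^n$ on boundary faces, the latter because gradients and interior jumps of $c$ vanish), so the pressure-correction identity extends to all of $M_h$ and the same goes for the first identity. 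The main bookkeeping risk lies in the step where the four inner-product terms are rearranged into $\aelip$, as sign errors in the mixed terms or in the stabilization contribution are easy to make; beyond that everything is routine, and the only structural ingredients used are \eqref{eq:sec_err_eq}, \eqref{eq:def_b_lift_2}, and \eqref{eq:lift_op_2}.
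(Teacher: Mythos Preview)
Your argument is correct. The paper does not supply its own proof of this lemma (it is recalled from \cite{inspaper1}), so there is nothing to compare against; your derivation via \eqref{eq:sec_err_eq}, \eqref{eq:def_b_lift_2}, and \eqref{eq:lift_op_2}, followed by the extension of \eqref{eq:error_pressure_correction} from $M_{h0}$ to $M_h$ using $b(\cdot,c)=\aelip(\cdot,c)=0$ for constants $c$, is exactly the natural route and all steps check out. One minor remark: your proof of \eqref{eq:div_error} already holds for all $q_h\in M_h$ without any appeal to \eqref{eq:error_pressure_correction}, so the constant-extension argument is needed only for \eqref{eq:div_error_2}.
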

The forms $a_\mathcal{C}$ and $b$ admit the following bounds.
\begin{lemma} \label{prep:bounds_nonlinear_terms}
    There exists a constant $C$, independent of $h$, $\tau$, $\bm{w}_h, \bm{z}, \bm{v}$ and $\bm{\theta}_h$, such that the following estimates hold. 
    \begin{enumerate}[(i)]
    \item If $\bm{v} \in (W^{1,3}(\Omega)\cap H_0^1(\Omega) \cap L^{\infty}(\Omega))^d$, then for any $\bm{z} \in \bm{X}$ and $\bm{\theta}_h, \bm{w}_h \in \bm{X}_h$:
        \begin{equation}
        |a_\mathcal{C}(\bm{z}; \bm{w}_h, \bm{v}, \bm{\theta}_h)| + |b(\bm{w}_h, \bm{v} \cdot \bm{\theta}_h))| \leq 
    C \|\bm{w}_h \| \vertiii{\bm{v}} \| \bm{\theta}_h\|_{\DG}.
    \label{eq:first_estimate_nonlinear_form}
    \end{equation}
    \item If $\bm{v} \in (H^{k+1}(\Omega) \cap  H^1_0(\Omega))^d$, then for any $\bm{z} \in \bm{X}$ and $\bm{\theta}_h , \bm{w}_h \in \bm{X}_h$:
    \begin{multline}
    |\mathcal{C}(\bm{w}_h, \bm{v} - \Pi_h \bm{v}, \bm{\theta}_h)| + |b(\bm{w}_h, (\bm{v} - \Pi_h \bm{v}) \cdot \bm{\theta}_h)| \\  \leq C \|\bm{w}_h\|  |\bm{v}|_{H^{k+1}(\Omega)}\| \bm{\theta}_h\|_{\DG}.\label{eq:second_estimate_nonlinear_term_C1}
    \end{multline}
    \end{enumerate}
    \end{lemma}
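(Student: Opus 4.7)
The plan is to reduce both parts to a handful of elementary estimates by exploiting two structural observations. First, because $\bm{v}\in H_0^1(\Omega)^d\cap L^\infty(\Omega)^d$ is continuous across every interior face and vanishes on $\partial\Omega$ (recalling the convention $\bm{v}^{\mathrm{ext}}=\bm{0}$ on boundary faces), the factor $\bm{v}^{\mathrm{int}}-\bm{v}^{\mathrm{ext}}$ appearing in the upwind sum of $a_\mathcal{C}$ is identically zero, so $a_\mathcal{C}(\bm{z};\bm{w}_h,\bm{v},\bm{\theta}_h)=\mathcal{C}(\bm{w}_h,\bm{v},\bm{\theta}_h)$ independently of $\bm{z}$. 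Second, comparing the definition \eqref{eq:split_ac1} of $\mathcal{C}$ with \eqref{eq:form_b} gives the algebraic identity
\begin{equation*}
\mathcal{C}(\bm{w}_h,\bm{v},\bm{\theta}_h)=\sum_{E\in\mesh_h}\int_E(\bm{w}_h\cdot\nabla\bm{v})\cdot\bm{\theta}_h+\tfrac12\,b(\bm{w}_h,\bm{v}\cdot\bm{\theta}_h),
\end{equation*}
so in both parts it suffices to bound a convective volume integral and a $b$-form separately.

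For part (i), the convective integral is handled by H\"older's inequality with exponents $(2,3,6)$ together with the discrete Poincar\'e inequality \eqref{eq:discreter_poincare}, producing $\|\bm{w}_h\|\,|\bm{v}|_{W^{1,3}(\Omega)}\,\|\bm{\theta}_h\|_{L^6(\Omega)}\le C\|\bm{w}_h\|\,\vertiii{\bm{v}}\,\|\bm{\theta}_h\|_{\DG}$. For $b(\bm{w}_h,\bm{v}\cdot\bm{\theta}_h)$ I use the second (``integration-by-parts'') representation \eqref{eq:equiv_form_b}, moving the gradient onto $\bm{v}\cdot\bm{\theta}_h$ so that no inverse estimate on $\bm{w}_h$ is needed. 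The product rule then yields the same convective integral plus a term $\sum_E\int_E(\bm{w}_h\cdot\nabla_h\bm{\theta}_h)\cdot\bm{v}$, bounded directly by $\|\bm{v}\|_{L^\infty}\|\bm{w}_h\|\|\bm{\theta}_h\|_{\DG}$; the remaining interior-face term, using continuity of $\bm{v}$ so that $[\bm{v}\cdot\bm{\theta}_h]=\bm{v}\cdot[\bm{\theta}_h]$, is controlled via the weighted trace estimate $\sum_e h_e\|\{\bm{w}_h\}\|_{L^2(e)}^2\le C\|\bm{w}_h\|^2$ and the jump part of $\|\bm{\theta}_h\|_{\DG}$.

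For part (ii) the same decomposition applies with $\bm{v}$ replaced by $\bm{v}-\Pi_h\bm{v}$. The convective integral is bounded analogously, using the local gradient estimate \eqref{eq:stabgrad} with $s=1,\,r=3$ and the Sobolev embedding $H^{k+1}(\Omega)\hookrightarrow W^{1,3}(\Omega)$ (valid in $d\le 3$ for $k\ge 1$). For the corresponding $b$-form I switch to the first representation \eqref{eq:form_b}: an inverse estimate on $\nabla_h\cdot\bm{w}_h$ contributes an $h^{-1}$ factor which is absorbed by the approximation bound $\|\bm{v}-\Pi_h\bm{v}\|_{L^3(\Omega)}\le Ch|\bm{v}|_{W^{1,3}(\Omega)}\le Ch|\bm{v}|_{H^{k+1}(\Omega)}$ from \eqref{eq:approximation_prop_1}. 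I expect the hardest step to be the face contribution $\sum_e\int_e\{(\bm{v}-\Pi_h\bm{v})\cdot\bm{\theta}_h\}[\bm{w}_h]\cdot\bm{n}_e$: since $\bm{v}-\Pi_h\bm{v}$ now has nontrivial jumps, one cannot factor out an $L^\infty$-norm of $\bm{v}$; the plan is to combine a continuous trace inequality for $\bm{v}-\Pi_h\bm{v}$ in $L^3(e)$ with a discrete trace estimate for $\bm{\theta}_h$ in $L^6(e)$ and the weighted bound $\sum_e h_e\|[\bm{w}_h]\|_{L^2(e)}^2\le C\|\bm{w}_h\|^2$, then verify that the resulting powers of $h$ balance so that no negative power survives and only $|\bm{v}|_{H^{k+1}(\Omega)}$ appears on the right-hand side.
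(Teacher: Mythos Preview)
Your proof of part~(i) is essentially identical to the paper's: both use that the upwind term vanishes for $\bm{v}\in H_0^1(\Omega)^d$, reduce to the identity $a_\mathcal{C}=\text{(convective integral)}+\tfrac12 b$, bound the convective integral by H\"older $(2,3,6)$ and \eqref{eq:discreter_poincare}, and handle $b(\bm{w}_h,\bm{v}\cdot\bm{\theta}_h)$ via \eqref{eq:equiv_form_b}, the product rule, and the fact that $[\bm{v}\cdot\bm{\theta}_h]=\bm{v}\cdot[\bm{\theta}_h]$.

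For part~(ii) the two proofs diverge in how $b(\bm{w}_h,(\bm{v}-\Pi_h\bm{v})\cdot\bm{\theta}_h)$ is treated. The paper keeps the representation \eqref{eq:equiv_form_b}, applies the jump identity $[(\bm{v}-\Pi_h\bm{v})\cdot\bm{\theta}_h]=\{\bm{v}-\Pi_h\bm{v}\}\cdot[\bm{\theta}_h]+[\bm{v}-\Pi_h\bm{v}]\cdot\{\bm{\theta}_h\}$, bounds the first two resulting pieces exactly as in part~(i) using the stability bounds \eqref{eq:stability_linf_pih}--\eqref{eq:vertiii}, and disposes of the new term $\sum_e\int_e\{\bm{w}_h\}\cdot\bm{n}_e[\bm{v}-\Pi_h\bm{v}]\cdot\{\bm{\theta}_h\}$ by invoking an estimate from the companion paper. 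Your route via \eqref{eq:form_b} is different but sound: the inverse estimate on $\nabla_h\cdot\bm{w}_h$ is indeed balanced by $\|\bm{v}-\Pi_h\bm{v}\|_{L^3}=O(h)$, and your face term $\sum_e\int_e\{(\bm{v}-\Pi_h\bm{v})\cdot\bm{\theta}_h\}[\bm{w}_h]\cdot\bm{n}_e$ does close with the $(3,6,2)$ split, giving an $h^{2/3-1/6-1/2}=h^0$ balance. Your argument is more self-contained (no external citation), but be aware that you are using two ingredients not explicitly listed in the paper: an $L^3$ continuous trace inequality $\|\phi\|_{L^3(e)}\le Ch^{-1/3}(\|\phi\|_{L^3(E)}+h\|\nabla\phi\|_{L^3(E)})$, and a \emph{local} $L^3$ approximation bound for $\Pi_h$ (the paper only states the global version \eqref{eq:approximation_prop_1} and the local gradient bound \eqref{eq:stabgrad}). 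Both are standard for this class of interpolants, so just state them explicitly when you write out the details.
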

    \begin{proof}
    (i) Since $\bm{v}$ belongs to $H^1(\Omega)^d$ and vanishes on the boundary, we have: 
    \begin{equation}
     a_\mathcal{C}(\bm{z}; \bm{w}_h, \bm{v}, \bm{\theta}_h) =  \sum_{E \in \mesh_h} \int_{E} (\bm{w}_h \cdot \nabla \bm{v}) \cdot \bm{\theta}_h  + \frac{1}{2} b(\bm{w}_h, \bm{v} \cdot \bm{\theta}_h). \label{eq:ident_cont_u}
    \end{equation}
    With H\"{o}lder's inequality and \eqref{eq:discreter_poincare}, 
    we have 
    \begin{equation} \label{eq:first_nonlinear_estimate_0}
       \left|  \sum_{E \in \mesh_h} \int_{E} (\bm{w}_h \cdot \nabla \bm{v} ) \cdot \bm{\theta}_h \right| \leq C_P\| \bm{w}_h \| \vert \bm{v} \vert_{W^{1,3}(\Omega)} \|\bm{\theta}_h\|_{\DG}. 
    \end{equation}
    To bound the second term in \eqref{eq:ident_cont_u}, we use \eqref{eq:equiv_form_b}. We have 
    \begin{align*}
    \frac{1}{2} b(\bm{w}_h, \bm{v} \cdot \bm{\theta}_h) &= -\frac12 \sum_{E \in \mesh_h} \int_{E} \bm{w}_h \cdot \nabla(\bm{v} \cdot \bm{\theta}_h) +\frac12 \sum_{e\in \Gamma_h} \int_e \{\bm{w}_h\} \cdot \bm{n}_e (\bm{v} \cdot [\bm{\theta}_h]) = A_1 + A_2. 
    \end{align*}
    The term $A_1$ is bounded as follows. With H\"{o}lder's inequality and \eqref{eq:discreter_poincare},
    \begin{align*}
    |A_1| \leq  \|\bm{w}_h\| \left(\|\nabla \bm{v}\|_{L^3(\Omega)}\|\bm{\theta}_h\|_{L^6(\Omega)} + \|\bm{v}\|_{L^{\infty}(\Omega)} \|\nabla_h \bm{\theta}_h\|_{L^2(\Omega)}\right) \leq C \|\bm{w}_h\| \vertiii{\bm{v}} \|\bm{\theta}_h\|_{\DG}. 
    \end{align*}
    For $A_2$, consider a face $e \in \Gamma_h$ and let $E_e^1$ and $E_e^2$ denote the elements sharing $e$. 
    \begin{align*}
    |\int_e \bm{w}_h|_{E_e^1} \cdot \bm{n}_e (\bm{v} \cdot [\bm{\theta}_h]) |\leq C \|\bm{v}\|_{L^{\infty}(\Omega)} |e|^{1/2}|E_e^1|^{-1/2}\|\bm{w}_h|_{E_e^1}\|_{L^2(E_e^1)}\|[\bm{\theta}_h]\|_{L^2(e)}
    \end{align*}
    Since $|e|^{1/2}|E_e^1|^{-1/2}h_e^{1/2} \leq C$, we apply H\"{o}lder's inequality and we obtain 
    \begin{align*}
    \sum_{e \in \Gamma_h} |\int_e |\bm{w}_h|_{E_e^1} \cdot \bm{n}_e (\bm{v} \cdot [\bm{\theta}_h]) | \leq C \|\bm{v}\|_{L^{\infty}(\Omega)} \|\bm{w}_h\|_{L^2(\Omega)}\|\bm{\theta}_h\|_{\DG}.
    \end{align*}
    The remaining term in $A_2$ is handled similarly. We conclude that \eqref{eq:first_estimate_nonlinear_form} holds. 

    (ii) We remark that by a Sobolev embedding, $\bm{v} \in (W^{1,3}(\Omega) \cap L^{\infty}(\Omega))^d$ since $k\geq 1$. We have  
    \begin{multline}
        \mathcal{C}(\bm{w}_h, \bm{v}- \Pi_h \bm{v}, \bm{\theta}_h) =  \sum_{E \in \mesh_h} \int_{E} (\bm{w}_h \cdot \nabla (\bm{v} - \Pi_h\bm{v} ) )\cdot \bm{\theta}_h    + \frac{1}{2} b(\bm{w}_h, (\bm{v} -\Pi_h \bm{v}) \cdot \bm{\theta}_h). \nonumber 
       \end{multline}
       The first term is bounded exactly like in the proof of (i) (see the derivation of bound \eqref{eq:first_nonlinear_estimate_0}) with $\bm{v}$ replaced by $\bm{v}- \Pi_h\bm{v}$. By the stability of the interpolant \eqref{eq:stabgrad}, we have
       \begin{equation}
            \sum_{E \in \mesh_h} \left| \int_{E} (\bm{w}_h \cdot \nabla (\bm{v}- \Pi_h\bm{v} ) )\cdot \bm{\theta}_h \right|  
        \leq C \|\bm{w}_h\| |\bm{v}|_{W^{1,3}(\Omega)} \| \bm{\theta}_h \|_{\DG}. \label{eq:third_nonlinear_term_0}
       \end{equation}
       For the second term, we use \eqref{eq:equiv_form_b} and the following identity.
       $$[(\bm{v}-\Pi_h \bm{v})\cdot \bm{\theta}_h]=\{\bm{v}-\Pi_h \bm{v}\}\cdot [\bm{\theta}_h] + [\bm{v}-\Pi_h \bm{v}]\cdot\{\bm{\theta}_h\}, \quad \forall e \in \Gamma_h.$$
       We obtain: 
       \begin{align*}
    & b(\bm{w}_h, (\bm{v}-\Pi_h \bm{v}) \cdot \bm{\theta}_h) = - \sum_{E \in \mesh_h} \int_{E} \bm{w}_h \cdot \nabla((\bm{v} - \Pi_h \bm{v}) \cdot \bm{\theta}_h)\\ & + \sum_{e\in \Gamma_h} \int_e \{\bm{w}_h\} \cdot \bm{n}_e (\{\bm{v} -  \Pi_h \bm{v}\} \cdot [\bm{\theta}_h]) + \sum_{e\in \Gamma_h} \int_e \{\bm{w}_h\} \cdot \bm{n}_e ([\bm{v} -  \Pi_h \bm{v}] \cdot \{\bm{\theta}_h\}) = \sum_{i=1}^3 \tilde{A}_i. 
        \end{align*}
    The terms $\tilde{A}_1$ and $\tilde{A}_2$ are handled in a similar way to $A_1$ and $A_2$ with $\bm{v}$ being replaced with $(\bm{v} - \Pi_h \bm{v})$. With the stability property \eqref{eq:stability_linf_pih}  and \eqref{eq:vertiii}, we have
    \begin{align*}
    |\tilde{A}_1| +|\tilde{A}_2| &\leq C \|\bm{w}_h\|\vertiii{\bm{v} - \Pi_h \bm{v}}\|\bm{\theta}_h\|_{\DG}\leq C\|\bm{w}_h\|\vertiii{\bm{v}}\|\bm{\theta}_h\|_{\DG}. 
    \end{align*}
        To handle $\tilde{A}_3$, we refer to the proof of bound (6.60) in Lemma 6.5 in \cite{inspaper1}. We have
     \begin{equation*}
     |\tilde{A}_3| \leq C \|\bm{w}_h\| |\bm{v}|_{H^{k+1}(\Omega)}\|\bm{\theta}_h\|_{\DG}. 
     \end{equation*} 
     We conclude that \eqref{eq:second_estimate_nonlinear_term_C1} holds.
     \end{proof}
In addition, we have the following bounds on $a_\mathcal{C}$, $b$, $\mathcal{U}$, and  $\mathcal{C}$ \cite{inspaper1}.
\begin{lemma}\label{lemma:bounds_nonlinear_terms_2}
  There exists a constant $C$, independent of $h, \tau, \bm{w}, \bm{z}, \bm{v},$ and $\bm{\theta}_h$ such that the following bounds hold. 
 \begin{enumerate}[(i)]
      \item If $\bm{w} \in (H^{k+1}(\Omega)\cap H^1_0(\Omega))^d$ satisfies $b(\bm{w}, q_h) = 0,\,\, \forall q_h \in M_h$ and if $\bm{v}$ belongs to $(W^{1,3}(\Omega)\cap H^1_0(\Omega)\cap L^{\infty}(\Omega))^d$, then for any $\bm{z} \in \bm{X}$ and $ \bm{\theta}_h \in \bm{X}_h$:
     \begin{equation}
      |a_\mathcal{
          C}(\bm{z}; \Pi_h\bm{w}- \bm{w}, \bm{v}, \bm{\theta}_h )| + |b(\Pi_h\bm{w}- \bm{w}, \bm{v}\cdot\bm{\theta}_h )|  \leq C  h^{k+1} \vert \bm{w} \vert_{H^{k+1}(\Omega)} \vertiii{\bm{v}} \|\bm{\theta}_h\|_{\DG}. \label{eq:third_estimate_nonlinear_term}
     \end{equation}
     \item If $\bm{w} \in (H^{k+1}(\Omega) \cap  H^1_0(\Omega))^d$ and $\bm{v} \in (W^{1,3}(\Omega) \cap H^1_0(\Omega) \cap L^{\infty}(\Omega))^d$, then for any $\bm{z} \in \bm{X}$ and $\bm{\theta}_h \in \bm{X}_h$:
     \begin{equation}
     \hspace*{-0.5cm}|\mathcal{C}(\Pi_h \bm{v},\Pi_h \bm{w} - \bm{w},  \bm{\theta}_h) | + |\mathcal{U}(\bm{z};\Pi_h \bm{v},\Pi_h \bm{w} - \bm{w}
     ,  \bm{\theta}_h) |  \leq C h^{k}|\bm{w}|_{H^{k+1}(\Omega)}\vertiii{\bm{v}}\| \bm{\theta}_h\|_{\DG}. \label{eq:second_estimate_nonlinear_term_U2}
     \end{equation} 
     \item If $\bm{v} \in (H^{k+1}(\Omega) \cap  H^1_0(\Omega))^d$, then for any $\bm{z} \in \bm{X}$ and $\bm{\theta}_h \in \bm{X}_h$:\begin{equation}
        |\mathcal{U}(\bm{z}; \bm{w}_h, \bm{v}- \Pi_h \bm{v},  \bm{\theta}_h)|  \leq C h^{k-d/4}\| \bm{w}_h \||\bm{v}|_{H^{k+1}(\Omega)} \|\bm{\theta}_h \|_{\DG}. \label{eq:second_estimate_nonlinear_term}
     \end{equation}
 \end{enumerate}
\end{lemma}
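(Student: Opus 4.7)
All three estimates decompose into volume contributions on elements plus jump/upwind contributions on faces, and each term is bounded by H\"older together with the approximation and stability properties \eqref{eq:approximation_prop_1}--\eqref{eq:vertiii} of $\Pi_h$, the discrete Poincar\'e inequality \eqref{eq:discreter_poincare}, and trace and inverse inequalities on element boundaries. The structural trick distinguishing the three parts is the hypothesis used in each case: in (i) the orthogonality $b(\Pi_h\bm{w}-\bm{w},q_h)=0$ for all $q_h\in M_h$ recovers the optimal $h^{k+1}$ rate; in (ii) this orthogonality is unavailable and one loses a factor of $h$; in (iii) a further $h^{d/4}$ is lost as the price of three simultaneous face traces.

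\textbf{Parts (i) and (ii).} For (i), I begin with the identity \eqref{eq:ident_cont_u} applied to the argument $\Pi_h\bm{w}-\bm{w}$, splitting $a_\mathcal{C}$ into a volume term $\sum_E\!\int_E((\Pi_h\bm{w}-\bm{w})\cdot\nabla\bm{v})\cdot\bm{\theta}_h$ and a divergence-form term $\tfrac12\,b(\Pi_h\bm{w}-\bm{w},\bm{v}\cdot\bm{\theta}_h)$. The volume term is bounded by H\"older in $L^2\times L^3\times L^6$, the $L^2$-approximation \eqref{eq:approximation_prop_1} with $(r,s)=(2,k+1)$, and \eqref{eq:discreter_poincare}. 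For the $b$-term, the definition of $\Pi_h$ combined with $b(\bm{w},q_h)=0$ yields $b(\Pi_h\bm{w}-\bm{w},q_h)=0$ for every $q_h\in M_h$, so I insert the $L^2$ projection $\pi_h(\bm{v}\cdot\bm{\theta}_h)$, expand $b$ via \eqref{eq:form_b}, and combine \eqref{eq:l2_proj_approximation} with the $H^1$-stability \eqref{eq:stabgrad} of $\Pi_h\bm{w}-\bm{w}$ and face trace bounds to reach $h^{k+1}$. For (ii) the same identity and splittings are used, but now without orthogonality: the volume pieces use \eqref{eq:stability_linf_pih} to bound $\|\Pi_h\bm{v}\|_{L^\infty}$ by $\vertiii{\bm{v}}$, together with \eqref{eq:stabgrad} for $\Pi_h\bm{w}-\bm{w}$; the face integral in $\mathcal{C}$ and the upwind form $\mathcal{U}$ are estimated using $\|[\Pi_h\bm{v}]\|_{L^2(e)}^2\leq C h_e\vertiii{\bm{v}}^2$ and $\|\Pi_h\bm{w}-\bm{w}\|_{L^2(e)}^2\leq C h_e^{2k+1}|\bm{w}|_{H^{k+1}(\Delta_E)}^2$, yielding the $h^k$ rate after Cauchy-Schwarz over elements.

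\textbf{Part (iii) and the main obstacle.} This is the most delicate estimate: three functions must be controlled on the $(d{-}1)$-dimensional skeleton while $\bm{w}_h$ is only available in the bare $L^2$ norm. Starting from the definition of $\mathcal{U}$, a threefold H\"older inequality is needed on each face. A naive choice $(L^2,L^2,L^\infty)$ combining a discrete inverse estimate $\|\bm{\theta}_h\|_{L^\infty(E)}\leq C h_E^{-d/2}\|\bm{\theta}_h\|_{L^2(E)}$ with the discrete trace bound on $\{\bm{w}_h\}$ and face approximation of $\bm{v}-\Pi_h\bm{v}$ would produce only $h^{k-d/2}$. To sharpen this to the claimed $h^{k-d/4}$, one must replace the crude $L^\infty$ bound on $\bm{\theta}_h$ by a Sobolev embedding into $L^q(\Omega)$ controlled by \eqref{eq:discreter_poincare} (with $q\leq 6$ in 3D), while simultaneously interpolating the approximation of $\bm{v}-\Pi_h\bm{v}$ between $L^2$ and $L^\infty$. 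Choosing the H\"older exponents so that the face measure $h_e^{d-1}$ pairs correctly with the trace factors, and so that $\|\bm{\theta}_h\|_{\DG}$ (rather than the weaker $\|\bm{\theta}_h\|$) emerges on the right-hand side, is where the $d/4$ arises. Verifying this balance and the final Cauchy-Schwarz step over the element-face incidence is the step I expect to require the most care.
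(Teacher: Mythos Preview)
The paper does not prove this lemma: immediately before the statement it writes ``In addition, we have the following bounds on $a_\mathcal{C}$, $b$, $\mathcal{U}$, and $\mathcal{C}$ \cite{inspaper1}'' and cites the companion paper, so there is no proof here to compare against. Your outline for (i) and (ii) is correct in spirit. For (i), the key identity $b(\Pi_h\bm w-\bm w,q_h)=0$ for all $q_h\in M_h$ indeed follows from \eqref{eq:def_pi_h} together with the hypothesis on $\bm w$, and subtracting $\pi_h(\bm v\cdot\bm\theta_h)$ before expanding \eqref{eq:form_b} does recover $h^{k+1}$ once you note that $\bm v\cdot\bm\theta_h\in H^1(E)$ (since $\nabla\bm v\in L^3$, $\bm\theta_h\in L^6$, $\bm v\in L^\infty$). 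For (ii) the sketch is likewise reasonable.

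For (iii), you have correctly located the difficulty but misidentified where the $h^{-d/4}$ actually comes from. It is \emph{not} produced by ``interpolating the approximation of $\bm v-\Pi_h\bm v$ between $L^2$ and $L^\infty$''; the approximation error stays in $L^2$ on each face. The clean route is: on each face apply H\"older with exponents $(4,2,4)$,
\[
\int_e \vert\{\bm w_h\}\vert\,\vert[\bm v-\Pi_h\bm v]\vert\,\vert\bm\theta_h\vert
\le \|\{\bm w_h\}\|_{L^4(e)}\,\|[\bm v-\Pi_h\bm v]\|_{L^2(e)}\,\|\bm\theta_h\|_{L^4(e)},
\]
use the discrete trace \eqref{eq:trace_ineq_discrete} for the first and third factors and the standard $L^2$ face approximation $\|[\bm v-\Pi_h\bm v]\|_{L^2(e)}\le Ch^{k+1/2}|\bm v|_{H^{k+1}(\Delta_e)}$ for the middle one, then sum over faces with the discrete H\"older inequality in $\ell^4\times\ell^2\times\ell^4$. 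This produces $h^{k}\,\|\bm w_h\|_{L^4(\Omega)}\,|\bm v|_{H^{k+1}(\Omega)}\,\|\bm\theta_h\|_{L^4(\Omega)}$. Now \eqref{eq:discreter_poincare} bounds $\|\bm\theta_h\|_{L^4(\Omega)}\le C\|\bm\theta_h\|_{\DG}$, while the $h^{-d/4}$ loss comes from the inverse estimate $\|\bm w_h\|_{L^4(\Omega)}\le Ch^{-d/4}\|\bm w_h\|$ (cf.\ \eqref{eq:inverse_estimate}--\eqref{eq:inverse_estimate2}). So the suboptimal factor is entirely attributable to lifting $\bm w_h$ from $L^2$ to $L^4$, not to the treatment of $\bm v-\Pi_h\bm v$.
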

The following error and stability estimates were shown in \cite{inspaper1}.
\begin{lemma}\label{lemma:first_err_estimate}
    Assume that $\sigma \geq M_{k-1}^2/d$,  $\tilde{\sigma} \geq 4 \tilde{M}_k^2$, and $\delta \leq 1/(4d)$. We have
    \begin{equation}
        \| \bm{u}_h^m\|^2 +  \frac{\mu }{4} \tau \sum_{n=1}^{m} \| \bm{v}_h^n \|_{\DG}^2  
     \leq \|\bm{u}^0 \|^2 +  \frac{4C_P^2}{\mu} \tau \sum_{n=1}^{m} \|\bm{f}^{n}\|^2. 
    \end{equation}  
    Assume that $\bm{u} \in L^{\infty}(0,T; H^{k+1}(\Omega)^d)$, $\partial_t \bm{u} \in L^2(0,T ; H^{k}(\Omega)^d)$, $\partial_{tt} \bm{u} \in L^2((0,T)\times \Omega)$, and $p \in L^{\infty}(0,T; H^k(\Omega))$. Then, there exists a constant $\gamma$ independent of $\tau, h$ and $\mu$ such that if $\tau \leq \gamma \mu$, 
    \begin{multline} 
      \|\bm{e}_h^m\|^2 +  \sum_{n=1}^m \| \bm{e}_h^{n} - \bm{e}_h^{n-1} \|^2 + \frac{1}{16}\tau^2  \sum_{n=1}^m \vert \phi_h^n \vert_{\DG}^2 + \frac{\mu}{8} \tau \sum_{n=1}^m  \|\err \|_{\DG}^2 \\ \leq C_\mu \left(1 + \mu+\frac{1}{\mu}\right)( \tau + h^{2k}).  \label{eq:first_error_estimate}
    \end{multline}
\end{lemma}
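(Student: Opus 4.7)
The plan is to prove both estimates by energy-type arguments that exploit the corrector identities \eqref{eq:pressure_correction}--\eqref{eq:update_velocity_1} to create a telescoping structure.

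For the stability bound, I would test \eqref{eq:intermidiate_velocity} with $\bm{\theta}_h = 2\tau \bm{v}_h^n$. Polarization applied to $2(\bm{v}_h^n, \bm{v}_h^n - \bm{u}_h^{n-1})$ produces $\|\bm{v}_h^n\|^2 - \|\bm{u}_h^{n-1}\|^2 + \|\bm{v}_h^n - \bm{u}_h^{n-1}\|^2$; the convective term drops by the positivity property \eqref{eq:cpositivity}; coercivity \eqref{eq:coercivity_a_ellip} yields $\mu\tau\|\bm{v}_h^n\|_{\DG}^2$; and the forcing is absorbed through Cauchy--Schwarz, \eqref{eq:discreter_poincare}, and Young's inequality with constant proportional to $C_P^2/\mu$. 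The delicate point is the pressure term $2\tau b(\bm{v}_h^n, p_h^{n-1})$: combining \eqref{eq:update_pressure_1}, \eqref{eq:pressure_correction}, and \eqref{eq:update_velocity_1} together with the lift bounds \eqref{eq:lift_prop_r}--\eqref{eq:lift_prop_g} converts this quantity into a telescoping difference of $\|\bm{u}_h^n\|^2$ plus nonnegative dissipative contributions. The hypotheses $\sigma \geq M_{k-1}^2/d$, $\tilde{\sigma}\geq 4\tilde{M}_k^2$, and $\delta\leq 1/(4d)$ are precisely what is needed for the sign-indefinite remainders to be absorbed. Summing over $n=1,\dots,m$ produces the stated bound.

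For the error estimate, I would test \eqref{eq:first_err_eq} with $\bm{\theta}_h = 2\tau \err$. Polarization on $2(\err - \bm{e}_h^{n-1}, \err)$, positivity of $a_\mathcal{C}$ on the first argument, and coercivity of $a_\mathcal{D}$ produce the dominant left-hand side quantities $\|\err\|^2 - \|\bm{e}_h^{n-1}\|^2 + \|\err - \bm{e}_h^{n-1}\|^2$ and $\mu\tau\|\err\|_{\DG}^2$. The convective consistency $\tau R_\mathcal{C}(\err)$ is expanded via \eqref{eq:splitting_technique} with $\bm{u}=\bm{u}^n$, $\bm{w}=\bm{u}_h^{n-1}$, $\bm{v}_1=\bm{u}^n$, $\bm{v}_2=\Pi_h\bm{u}^n$; each resulting piece is controlled through Lemmas \ref{prep:bounds_nonlinear_terms} and \ref{lemma:bounds_nonlinear_terms_2}, yielding contributions of the form $\|\bm{e}_h^{n-1}\|\vertiii{\bm{u}^n}\|\err\|_{\DG}$ together with $h^k$-type approximation residuals. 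The viscous consistency $\tau\mu a_\mathcal{D}(\Pi_h\bm{u}^n - \bm{u}^n, \err)$ is $O(h^{2k})$ via \eqref{eq:approximation_prop_2} and Young, and the time truncation $R_t(\err)$ is $O(\tau^3\|\partial_{tt}\bm{u}\|^2)$ by Taylor expansion.

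The main obstacle is the pressure coupling $\tau b(\err, p_h^{n-1} - p^n)$. I would decompose $p_h^{n-1} - p^n = (p_h^{n-1} - \pi_h p^{n-1}) + (\pi_h p^{n-1} - \pi_h p^n) + (\pi_h p^n - p^n)$ and then use \eqref{eq:sec_err_eq}, \eqref{eq:error_pressure_correction}, \eqref{eq:error_update_pressure} together with Lemma \ref{prep:weak_div_error} to recast the $b(\err,\cdot)$ action in terms of $\bm{e}_h^n$ and $\phi_h^n$. Abel summation in $n$ then converts the discrete time differences of the pressure error into quantities involving $\tau^2\aelip(\phi_h^n,\phi_h^n)$ and $\tau^2|\phi_h^n|_{\DG}^2$, which are exactly the quantities appearing on the left-hand side of the claim and can be absorbed under the hypotheses on $\sigma,\tilde{\sigma},\delta$. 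After gathering all residuals on the right-hand side and applying discrete Gr\"onwall, the smallness condition $\tau \leq \gamma\mu$ keeps the amplification factor (essentially $e^{CT/\mu}$) finite, producing the constant $C_\mu$ and the final $O(\tau + h^{2k})$ bound.
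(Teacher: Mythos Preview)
The paper does not prove this lemma; it states it and attributes the proof to \cite{inspaper1}. There is therefore no proof in the present manuscript to compare your proposal against.

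Your outline is a reasonable high-level sketch of how such energy arguments proceed for pressure-correction DG schemes, and the ingredients you identify (polarization, positivity \eqref{eq:cpositivity}, coercivity, Lemmas~\ref{prep:bounds_nonlinear_terms}--\ref{lemma:bounds_nonlinear_terms_2} for the nonlinear residual, telescoping via the corrector equations) are the right ones. Two places where the sketch is thin: (i) the pressure coupling is not handled by a bare Abel summation but by introducing accumulated auxiliary quantities $S_h^n = \delta\mu\sum_{i\leq n}(\nabla_h\cdot\tilde{\bm e}_h^i - R_h([\tilde{\bm e}_h^i]))$ and $\xi_h^n = p_h^n + S_h^n$, which satisfy $\xi_h^n-\xi_h^{n-1}=\phi_h^n$ and convert $b(\err,\cdot)$ into a telescoping $\aelip$ difference plus sign-controlled remainders---this is exactly the machinery you can see reused for the time-derivative version in the proof of Lemma~\ref{lemma:stability_time_derivative}; and (ii) $R_t(\err)$ carries, in addition to the $O(\tau^{3/2})$ truncation piece, an interpolation-error contribution of order $h^{k+1}\tau^{1/2}\vert\partial_t\bm u\vert_{H^{k}}$ that must be tracked to land on $h^{2k}$ after Young and summation. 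Neither omission is fatal, but a complete argument requires writing these out.
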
 
 We will make use of the following inverse estimates, see Theorem 4.5.11 in \cite{brenner2007mathematical} for \eqref{eq:inverse_estimate}-\eqref{eq:inverse_estimate2} and see Lemma 3.8 in \cite{riviere2008discontinuous} for \eqref{eq:inverse_estimate_dG}. 
\begin{alignat}{2}\label{eq:inverse_estimate}
    \|\bm{\theta}_h \|_{L^3(\Omega)} &\leq C h^{-d/6} \| \bm{\theta}_h \|, \quad \forall \bm{\theta}_h \in \bm{X}_h , \\ 
    \|\bm{\theta}_h \|_{L^{\infty}(\Omega)} & \leq C h^{-d/2} \| \bm{\theta}_h \|, \quad \forall \bm{\theta}_h \in \bm{X}_h ,\label{eq:inverse_estimate2}\\ 
    \|\bm{\theta}_h \|_{\DG} &\leq Ch^{-1}\|\bm{\theta}_h\|, \forall \bm{\theta}_h \in \bm{X}_h, \,\,\,\,   \mathrm{ and } \,\,\,\,  |q_h|_{\DG}  \leq Ch^{-1}\|q_h\|, \forall  q_h \in M_h. \label{eq:inverse_estimate_dG}
\end{alignat}
We will also use the following trace inequalities, see Section 2.1.3 in \cite{riviere2008discontinuous} and Lemma 1.5.2 in \cite{di2011mathematical}. Let $E \in \mesh_h$. For all $e \in \partial E$,  
\begin{alignat}{2} 
\|\bm{v}\|_{L^2(e)} &\leq C h^{-1/2} (\|\bm{v}\|_{L^2(E)} + h\|\nabla \bm{v}\|_{L^2(E)}), && \quad \forall \bm{v}\in H^1(E)^d \label{eq:trace_ineq_continuous}\\ 
\|\bm{\theta}_h\|_{L^r(e)} &\leq Ch^{-1/r}\|\bm{\theta}_h\|_{L^r(E)},&&  \quad \forall \bm{\theta}_h \in \bm{X}_h, \,\,  r \geq 1. \label{eq:trace_ineq_discrete}
\end{alignat}
One important tool in deriving improved error estimates is to carefully construct a dual problem and its discretization. To this end, define the error functions
\[
\bm{\chi}(t) = \bm{u}_h^n - \bm{u}(t), \quad \forall t^{n-1} < t\leq t^n, \quad \forall n\geq 1, \quad
\bm{\chi}(0) = \bm{u}_h^0 - \bm{u}^0. 
\] 
For all $t\geq 0$, let $(\bm{U}(t),P(t)) \in  H^1_0(\Omega)^d \times L^2_0(\Omega)$ be the solution of the following dual Stokes problem. 
    \begin{alignat}{2}
        -\Delta \bm{U}(t)  + \nabla P(t) &= \bm{\chi}(t), && \quad \mathrm{in}\,\, \Omega,  \label{eq:aux_pb_1}\\ 
        \nabla \cdot \bm{U}(t) & = 0, && \quad \mathrm{in } \,\, \Omega, \label{eq:aux_pb_2} \\
        \bm{U}(t) & = 0, && \quad \mathrm{on }\,\, \partial \Omega. \label{eq:aux_pb_3}
    \end{alignat} 
    Since $\bm{\chi}(t)$ belongs to $L^2(\Omega)^d$, we can assume that the solution to the above problem satisfies the following,  if the domain is convex. 
    \begin{equation}
        \|\bm{U}(t)\|_{H^2(\Omega)} + \| P(t)\|_{H^1(\Omega)} \leq C \|\bm{\chi}(t) \| .  \label{eq:regularity_assumption}
    \end{equation}
    Further, for $t\geq 0$, define $(\bm{U}_h(t), P_h(t)) \in \bm{X}_h \times M_{h0}$ as the dG solution to the auxiliary problem \eqref{eq:aux_pb_1}-\eqref{eq:aux_pb_3}. 
    \begin{alignat}{2}
        a_{\mathcal{D}}(\bm{U}_h(t), \bm{\theta}_h) - b(\bm{\theta}_h, P_h(t)) & =(\bm{\chi}(t), \bm{\theta}_h), \quad \forall \bm{\theta}_h \in \bm{X}_h, \label{eq:aux_1_dg} \\ 
        b(\bm{U}_h(t), q_h) & = 0, \quad \forall q_h \in M_{h0}. \label{eq:aux_2_dg}
    \end{alignat}
    The proofs for existence and uniqueness of $(\bm{U}_h(t), P_h(t))$ are found in \cite{riviere2008discontinuous}. We now state stability and error bounds for the dG solution of the dual problem. 
    \begin{lemma}\label{lemma:regularity_bounds_dual_problem}
For $t \in [0,T]$, there exists a constant $C$ independent of $h$ such that
    \begin{align}
    \vertiii{\bm{U}(t)}&\leq C \| \bm{\chi}(t)\|,\label{eq:regularity_infinity_norm}\\ 
    \|\bm{U}(t)-\bm{U}_h(t)\| +  h\|\bm{U}(t) -\bm{U}_h(t) \|_{\DG} + h\|P(t) - P_h(t)\| &\leq Ch^2\|\bm{\chi}(t)\|, \label{eq:error_dg_aux} \\  
    \|\bm{U}_h(t)\|_{L^{\infty}(\Omega)} &\leq C \| \bm{\chi}(t) \|. \label{eq:Linf_bd_Uh} 
    \end{align}
    \end{lemma}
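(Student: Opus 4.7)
The plan is to prove the three bounds in order, using Sobolev embeddings for \eqref{eq:regularity_infinity_norm}, the standard a priori analysis of the dG Stokes discretization for \eqref{eq:error_dg_aux}, and a triangle-inequality argument that plays an inverse estimate against the $L^2$ error bound for \eqref{eq:Linf_bd_Uh}.

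For \eqref{eq:regularity_infinity_norm} I will invoke the elliptic regularity \eqref{eq:regularity_assumption} together with Sobolev embeddings, valid in dimension $d \leq 3$. The embedding $H^2(\Omega)\hookrightarrow L^\infty(\Omega)$ yields $\|\bm{U}(t)\|_{L^\infty(\Omega)} \leq C\|\bm{U}(t)\|_{H^2(\Omega)}$, while $H^1(\Omega)\hookrightarrow L^6(\Omega)\hookrightarrow L^3(\Omega)$ applied componentwise to $\nabla\bm{U}(t)$ yields $|\bm{U}(t)|_{W^{1,3}(\Omega)} \leq C\|\bm{U}(t)\|_{H^2(\Omega)}$. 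Summing these and invoking \eqref{eq:regularity_assumption} produces \eqref{eq:regularity_infinity_norm}.

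For \eqref{eq:error_dg_aux} these are the standard optimal-order a priori error bounds for the symmetric interior penalty dG approximation of the stationary Stokes problem. They follow from the consistency of \eqref{eq:aux_1_dg}--\eqref{eq:aux_2_dg}, the coercivity \eqref{eq:coercivity_a_ellip} of $a_{\mathcal D}$, the discrete inf-sup stability of $b$ on $\bfX_h\times M_{h0}$, the approximation properties \eqref{eq:approximation_prop_1}--\eqref{eq:approximation_prop_2} and \eqref{eq:l2_proj_approximation}, and an Aubin-Nitsche duality argument for the $L^2$ velocity bound. Since none of this is the novelty of the present paper, I would simply cite the corresponding theorems in the monograph \cite{riviere2008discontinuous}.

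For \eqref{eq:Linf_bd_Uh} I will split using the triangle inequality as $\|\bm{U}_h(t)\|_{L^\infty(\Omega)} \leq \|\bm{U}_h(t)-\Pi_h\bm{U}(t)\|_{L^\infty(\Omega)} + \|\Pi_h\bm{U}(t)\|_{L^\infty(\Omega)}$. The second term is controlled by \eqref{eq:stability_linf_pih} and the already-proved \eqref{eq:regularity_infinity_norm}, giving $C\vertiii{\bm{U}(t)}\leq C\|\bm{\chi}(t)\|$. For the first term I apply the inverse inequality \eqref{eq:inverse_estimate2} and a further triangle inequality to obtain
$$\|\bm{U}_h(t)-\Pi_h\bm{U}(t)\|_{L^\infty(\Omega)} \leq Ch^{-d/2}\bigl(\|\bm{U}(t)-\bm{U}_h(t)\| + \|\bm{U}(t)-\Pi_h\bm{U}(t)\|\bigr).$$
Combining \eqref{eq:approximation_prop_1} (with $s=r=2$), \eqref{eq:regularity_assumption}, and the $L^2$ part of \eqref{eq:error_dg_aux} bounds the right-hand side by $Ch^{2-d/2}\|\bm{\chi}(t)\|$, which is $O(h)$ in 2D and $O(h^{1/2})$ in 3D and hence is $\leq C\|\bm{\chi}(t)\|$ for $h$ sufficiently small. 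The main thing to watch is precisely this $L^\infty$ step: the inverse estimate sacrifices a factor $h^{-d/2}$, and the saving has to come entirely from the $O(h^2)$ rate in \eqref{eq:error_dg_aux}. The convexity of $\Omega$, which underpins the full $H^2$ regularity in \eqref{eq:regularity_assumption}, is essential here; without it the $L^2$ rate would degrade below $h^2$ and the argument would fail in three dimensions.
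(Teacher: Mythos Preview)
Your proposal is correct and follows essentially the same approach as the paper. The only cosmetic difference is in the proof of \eqref{eq:Linf_bd_Uh}: the paper inserts the Lagrange interpolant $\tilde{\bm U}_h(t)$ of $\bm U(t)$ (so that $\|\tilde{\bm U}_h(t)\|_{L^\infty(\Omega)}\leq C\|\bm U(t)\|_{L^\infty(\Omega)}$ is immediate) whereas you insert $\Pi_h\bm U(t)$ and invoke \eqref{eq:stability_linf_pih}; both choices yield the same $h^{2-d/2}$ surplus after the inverse estimate and are equally valid.
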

    \begin{proof}
    Fix $t \in [0,T]$. Note 
a Sobolev embedding result, see Theorem 1.4.6 in \cite{brenner2007mathematical}, and Theorem 2 in Section 5.6.1 in  \cite{evans2010partial},
 and \eqref{eq:regularity_assumption} 
     yield
    \begin{equation}
        \|\bm{U}(t) \|_{L^{\infty}(\Omega)} + |\bm{U}(t)|_{W^{1,3}(\Omega)} \leq C \| \bm{U}(t)\|_{H^2(\Omega)} \leq C \|\bm{\chi}(t)\|. 
    \end{equation}
    For $k \geq 1$, since the domain is convex, we have an error estimate for the dG error in the $L^2$ and energy norms \cite{riviere2008discontinuous,girault2005discontinuous}. Using \eqref{eq:regularity_assumption}, we obtain  \begin{align}
        \|\bm{U}(t)-\bm{U}_h(t)\|& + h\|\bm{U}(t) -\bm{U}_h(t) \|_{\DG} + h\|P(t) - P_h(t)\| \nonumber \\ & \leq Ch^2(\|\bm{U}(t)\|_{H^2(\Omega)} + \|P(t)\|_{H^1(\Omega)} ) \leq Ch^2 \|\bm{\chi}(t)\|.
    \end{align} 
    To show \eqref{eq:Linf_bd_Uh}, let $\tilde{\bm{U}}_h(t) \in X_h$ be the Lagrange  interpolant of $\bm{U}(t)$. As a result of \eqref{eq:regularity_infinity_norm}, \eqref{eq:error_dg_aux},  inverse estimate \eqref{eq:inverse_estimate2}, and approximation properties,  we obtain 
    \begin{align}
   & \| \bm{U}_h(t)\|_{L^{\infty}(\Omega)} \leq \| \bm{U}_h(t) - \tilde{\bm{U}}_h(t) \|_{L^{\infty}(\Omega)}+ \| \tilde{\bm{U}}_h(t) \|_{L^{\infty}(\Omega)} \nonumber \\ &\leq  C h^{-d/2}(\| \bm{U}_h(t) - \bm{U}(t) \| + \| \bm{U}(t) - \tilde{\bm{U}}_h(t)\|)\nonumber + C\| \bm{U}(t) \|_{L^{\infty}(\Omega)}
    \leq C \|\bm{\chi}(t)\|. 
    \end{align}
    \end{proof}
    \begin{theorem}\label{theorem:improved_estimate_velocity}
    Assume that if $k \geq 2$, the condition  $h^2 \leq \tau$ holds. Under the same assumptions as Lemma \ref{lemma:first_err_estimate}, 
with $\partial_t \bm{u} \in L^2(0,T ; H^{k+1}(\Omega)^d)$, we have, with $K_\mu = \sum_{i=-4}^2 \mu^i$:  
    \begin{equation}
         \mu \tau \sum_{n=1}^m  \| \bm{u}_h^n - \bm{u}^n \|^2  +  \mu \tau  \sum_{n=1}^m  \| \bm{v}_h^n - \bm{u}^n \|^2  \leq C_\mu K_\mu (\tau^2 + h^{2k+2}), \quad \forall 0\leq m \leq N_T. 
    \end{equation} 
    \end{theorem}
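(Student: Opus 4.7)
The plan is to execute an Aubin--Nitsche duality argument, exploiting the dual Stokes problem \eqref{eq:aux_pb_1}--\eqref{eq:aux_pb_3} and its dG analog \eqref{eq:aux_1_dg}--\eqref{eq:aux_2_dg} together with the stability/error bounds in Lemma~\ref{lemma:regularity_bounds_dual_problem}. The key driver is that the dG dual velocity $\bm U_h$ is not only discretely divergence-free against $M_{h0}$, but in fact satisfies $b(\bm U_h,q_h)=0$ for \emph{all} $q_h\in M_h$ (since $b(\bm U_h,1)=0$ after cancellation of element divergence and jump contributions), so every pressure coupling with $\bm U_h$ collapses to a small consistency error.

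First I reduce the target to $L^2$ control of $\bm e_h^n$ and $\bm{\tilde e}_h^n$ via the triangle inequality and the interpolation bound \eqref{eq:approximation_prop_1}, writing $\bm u_h^n-\bm u^n=\bm e_h^n+(\Pi_h\bm u^n-\bm u^n)$ and analogously for $\bm v_h^n-\bm u^n$. Since \eqref{eq:sec_err_eq} yields $\|\bm e_h^n-\bm{\tilde e}_h^n\|\leq C\tau |\phi_h^n|_{\DG}$, Lemma~\ref{lemma:first_err_estimate} reduces the task to bounding $\mu\tau\sum_n\|\bm e_h^n\|^2$.

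Next, for each $n$ I choose the dual data to be $\bm e_h^n$ (equivalently evaluating the setup in the excerpt at $t^n$), producing $(\bm U^n,P^n)$ and its dG version $(\bm U_h^n,P_h^n)$. By consistency \eqref{eq:aux_1_dg} with $\bm\theta_h=\bm e_h^n$,
\begin{equation*}
\|\bm e_h^n\|^2 = a_{\mathcal D}(\bm U_h^n,\bm e_h^n) - b(\bm e_h^n,P_h^n).
\end{equation*}
Using symmetry of $a_{\mathcal D}$ and testing the momentum error equation \eqref{eq:first_err_eq} with $\bm\theta_h=\bm U_h^n\in\bm X_h$, I substitute for $a_{\mathcal D}(\bm U_h^n,\bm{\tilde e}_h^n)$. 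The two cancellations $b(\bm U_h^n,\phi_h^n)=0$ (from $\phi_h^n\in M_{h0}$, used in \eqref{eq:sec_err_eq} to rewrite $\bm e_h^n-\bm{\tilde e}_h^n$) and $b(\bm U_h^n,p_h^{n-1})=0$ reduce the full pressure contribution to $b(\bm U_h^n,p^n-\pi_h p^n)$, which is $O(h^k\|\bm e_h^n\|)$ by \eqref{eq:l2_proj_approximation} and the dG stability $\|\bm U_h^n\|_{\DG}\leq C\|\bm e_h^n\|$. Summing the resulting identity over $n=1,\dots,m$ and performing Abel summation on the telescoping difference $(\bm e_h^n-\bm e_h^{n-1},\bm U_h^n)$ produces a boundary term $(\bm e_h^m,\bm U_h^m)$ and cross terms in $\bm U_h^{n+1}-\bm U_h^n$, which by linearity of the dG dual correspond to its solution with data $\bm e_h^{n+1}-\bm e_h^n$; the discrete time-difference bound $\sum\|\bm e_h^n-\bm e_h^{n-1}\|^2\leq C_\mu(\tau+h^{2k})$ from Lemma~\ref{lemma:first_err_estimate} is exactly what I need to close this. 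The viscous projection residual, the truncation $R_t(\bm U_h^n)$ (bounded by Taylor expansion using $\partial_{tt}\bm u\in L^2$), and the convection error $R_{\mathcal C}(\bm U_h^n)$ plus $a_{\mathcal C}(\bm u_h^{n-1};\bm u_h^{n-1},\bm{\tilde e}_h^n,\bm U_h^n)$ are all estimated via the splitting \eqref{eq:splitting_technique} and Lemmas \ref{prep:bounds_nonlinear_terms}--\ref{lemma:bounds_nonlinear_terms_2}, where the $L^\infty$ stability \eqref{eq:Linf_bd_Uh} of $\bm U_h^n$ extracts the factor $\|\bm e_h^n\|$ so that Young's inequality absorbs $\|\bm e_h^n\|^2$ on the left. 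A final discrete Gronwall argument delivers the bound $C_\mu K_\mu(\tau^2+h^{2k+2})$, and the analogous bound for $\bm v_h^n-\bm u^n$ follows by the $\bm e_h^n-\bm{\tilde e}_h^n$ reduction.

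The main obstacle is the convection residual. Several of its pieces couple the \emph{previous} step's error $\bm u_h^{n-1}-\bm u^{n-1}$, only known at the rough rate of Lemma~\ref{lemma:first_err_estimate}, with the dual factor $\bm U_h^n$; the upwind contribution carries the suboptimal factor $h^{k-d/4}$ in \eqref{eq:second_estimate_nonlinear_term}, and only the hypothesis $h^2\leq\tau$ when $k\geq 2$ makes this compatible with the target rate $h^{2k+2}$. Careful bookkeeping of the powers of $h$, $\tau$, and $\mu$ in these convection estimates, together with absorbing the resulting $\|\bm e_h^n\|$ terms before invoking Gronwall, will be the delicate part of the proof.
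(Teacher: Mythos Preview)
Your overall duality strategy matches the paper, but two steps as described will not yield the target rate $\tau^2+h^{2k+2}$.

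\textbf{Pressure consistency.} The bound $b(\bm U_h^n,p^n-\pi_hp^n)=O(h^k\|\bm e_h^n\|)$ is one order short. After Young and summation it leaves $Ch^{2k}$, not $Ch^{2k+2}$. The missing idea is that $\bm U^n\in H^2(\Omega)^d\cap H_0^1(\Omega)^d$, so $[\bm U^n]|_e=0$ a.e.\ and the face sum only sees $[\bm U_h^n-\bm U^n]$. Trace estimates for $p^n-\pi_hp^n$ together with the dual error bound \eqref{eq:error_dg_aux} then give the sharper estimate $|b(\bm U_h^n,p^n-\pi_hp^n)|\leq Ch^{k+1}\|\bm\chi^n\|$, which is exactly what the paper uses.

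\textbf{Time-difference term.} Your plan to Abel-sum $\sum_n(\bm e_h^n-\bm e_h^{n-1},\bm U_h^n)$ and close the cross terms via $\sum\|\bm e_h^n-\bm e_h^{n-1}\|^2\leq C_\mu(\tau+h^{2k})$ does not work at the required order. The cross term $(\bm e_h^n,\bm U_h^{n+1}-\bm U_h^n)$ is bounded by $C\|\bm e_h^n\|\,\|\bm e_h^{n+1}-\bm e_h^n\|$; to absorb the first factor into the left side you must weight Young by $\epsilon\mu\tau$, which forces the constant on the second factor to scale like $(\mu\tau)^{-1}$ and leaves $C_\mu(\mu\tau)^{-1}(\tau+h^{2k})=O(1)$ on the right, destroying the estimate. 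The paper avoids this entirely: using the dual equation with $\bm\theta_h=\bm U_h^n$ it rewrites $(\bm\chi^n-\bm\chi^{n-1},\bm U_h^n)=a_{\mathcal D}(\bm U_h^n-\bm U_h^{n-1},\bm U_h^n)$, which telescopes as $\tfrac12\big(a_{\mathcal D}(\bm U_h^n,\bm U_h^n)-a_{\mathcal D}(\bm U_h^{n-1},\bm U_h^{n-1})\big)+\tfrac12 a_{\mathcal D}(\bm U_h^n-\bm U_h^{n-1},\bm U_h^n-\bm U_h^{n-1})$. This puts $\|\bm U_h^m\|_{\DG}^2$ on the left alongside $\mu\tau\sum\|\bm e_h^n\|^2$, and the residual terms from $R_t$ and the nonlinearity contribute $C\tau\sum\|\bm U_h^n\|_{\DG}^2$ on the right; Gronwall is then applied to $\|\bm U_h^n\|_{\DG}^2$, not to $\|\bm e_h^n\|^2$. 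Without this energy telescoping your argument cannot close.

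A smaller point: the hypothesis $h^2\leq\tau$ for $k\geq2$ is used at the end to absorb a cross term $\tau h^2$ arising from products of the suboptimal Lemma~\ref{lemma:first_err_estimate} bounds, not to cure the $h^{k-d/4}$ upwind estimate.
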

    \begin{proof}
        By consistency of the dG discretization, we have for $\bm{\theta}_h \in \bm{X}_h$ and $n \geq 1$:  
        \begin{equation}
     ((\partial_t \bm{u})^n, \bm{\theta}_h ) + a_\mathcal{C}(\bm{u}^n; \bm{u}^n, \bm{u}^n, \bm{\theta}_h) + \mu a_\mathcal{D}(\bm{u}^n, \bm{\theta}_h) =  b(\bm{\theta}_h, p^n) + (\bm{f}^n , \bm{\theta}_h ).  \label{eq:consistency_true_sol}
        \end{equation}
        Multiplying \eqref{eq:consistency_true_sol} by $\tau$, subtracting it from \eqref{eq:intermidiate_velocity}, and choosing $\bm{\theta}_h = \bm{U}_h^n = \bm{U}_h(t^n)$ yields 
        \begin{multline}
        (\bm{v}_h^n - \bm{u}^n - \bm{\chi}^{n-1},\bm{U}_h^n) + \tau \tilde{R}_{\mathcal{C}} (\bm{U}_h^n) + \tau \mu a_\mathcal{D} (\bm{v}_h^n - \bm{u}^n , \bm{U}_h^n)\\ = \tau b(\bm{U}_h^n, p_h^{n-1}- p^n) +  (\tau (\partial_t \bm{u})^n - (\bm{u}^n - \bm{u}^{n-1}), \bm{U}_h^n), \label{eq:first_error_eq_dual} \end{multline}
        where $$ \tilde{R}_{\mathcal{C}} (\bm{U}_h^n) = a_\mathcal{C}(\bm{u}_h^{n-1}; \bm{u}_h^{n-1}, \bm{v}_h^n, \bm{U}_h^{n}) -  a_\mathcal{C}(\bm{u}^{n}; \bm{u}^{n}, \bm{u}^n, \bm{U}_h^{n}).$$
        Throughout the proof, $\epsilon$ denotes a small positive constant to be determined later. Let us begin with the last two terms in \eqref{eq:first_error_eq_dual}.
        We use the fact that $\bm{U}_h^n$ satisfies \eqref{eq:aux_2_dg} and the definition of $\pi_h p^n$ to obtain the following.  
        \begin{align}
        b(\bm{U}_h^n, p_h^{n-1} - p^n) = -b(\bm{U}_h^n, p^n - \pi_h p^n) = \sum_{e \in \Gamma_h \cup \partial \Omega }\int_e \{p^n - \pi_h p^n \}[\bm{U}_h^n]\cdot \bm{n}_e. \nonumber  
        \end{align}
        Let $\Delta_e$ denote the union of the two elements sharing a face $e$. By a trace inequality, approximation property 
        \eqref{eq:l2_proj_approximation}, \eqref{eq:error_dg_aux}, and the fact that $[\bm{U}^n] =\bm{0}$ a.e on any $e \in \Gamma_h \cup \partial \Omega$ since $\bm{U}^n \in H_0^2(\Omega)^d$, we obtain 
\begin{equation}
|b(\bm{U}_h^n, p_h^{n-1} - p^n)| \leq C  h^{k+1} \vert p^n \vert_ {H^k(\Omega)} \| \bm{\chi}^n \| \leq
 \epsilon \mu \| \bm{e}_h^n \|^2 + C h^{2k+2} \left(1+\frac{1}{\epsilon\mu}\right).
   \label{eq:bd_b_dual}
        \end{equation}
        In the above, we used that 
\begin{equation}\label{eq:boundchin}
\|\bm{\chi}^n\| \leq C(h^{k+1}| \bm{u}^n |_{H^{k+1}(\Omega)} + \|\bm{e}_h^n\|),
\end{equation}
 which is  obtained by applying the triangle inequality and approximation property \eqref{eq:approximation_prop_1}. This bound will be used repeatedly in this proof. For the last term in \eqref{eq:first_error_eq_dual}, we simply have: 
        \begin{equation} |(\tau (\partial_t \bm{u})^n - (\bm{u}^n - \bm{u}^{n-1}),  \bm{U}_h^n)| \leq C\tau^2 \int_{t^{n-1}}^{t^n}  \| \partial_{tt}  \bm{u}\|^2 +  \tau \|\bm{U}_h^n\|_{\DG}^2. \label{eq:bounding_time_err_dual} \end{equation} 
We now consider the terms on the left hand side of \eqref{eq:first_error_eq_dual}.  With \eqref{eq:update_velocity_1} and \eqref{eq:aux_2_dg}, we have
\begin{align*}
    (\bm{v}_h^n - \bm{u}^n - \bm{\chi}^{n-1},\bm{U}_h^n) = (\bm{\chi}^{n} - \bm{\chi}^{n-1}, \bm{U}_h^n) -\tau b(\bm{U}_h^n, \phi_h^n ) =(\bm{\chi}^{n} - \bm{\chi}^{n-1}, \bm{U}_h^n).
\end{align*}
Note that from \eqref{eq:aux_1_dg}, \eqref{eq:aux_2_dg}, the above equality, and the symmetry of $a_{\mathcal{D}}(\cdot, \cdot)$,  we have 
\begin{align}
    (\bm{v}_h^n - \bm{u}^n - \bm{\chi}^{n-1},\bm{U}_h^n) & = a_{\mathcal{D}}(\bm{U}_h^n - \bm{U}_h^{n-1}, \bm{U}_h^n)= \frac{1}{2} \left( a_{\mathcal{D}}(\bm{U}_h^n, \bm{U}_h^n)  - a_{\mathcal{D}}( \bm{U}_h^{n-1}, \bm{U}_h^{n-1}) \right) \nonumber \\ &  + \frac{1}{2} a_{\mathcal{D}}(\bm{U}_h^{n} - \bm{U}_h^{n-1}, \bm{U}_h^{n} - \bm{U}_h^{n-1}). \label{eq:expanding_aepsi_dual}
\end{align}
In addition, we write 
\begin{align}
a_\mathcal{D} (\bm{v}_h^n - \bm{u}^n , \bm{U}_h^n) = a_\mathcal{D} (\err - \bm{e}_h^n, \bm{U}_h^n) + a_\mathcal{D} ( \bm{e}_h^n, \bm{U}_h^n) + a_\mathcal{D} (\Pi_h \bm{u}^n - \bm{u}^n, \bm{U}_h^n). \label{eq:breaking_up_aD_first_err_dual}
\end{align}
To handle the last term in the equality above, let $Q_h \bm{u}^n$ be the elliptic projection of $\bm{u}^n$ onto the space $\bm{X}_h$. Since the domain is convex, this projection satisfies  \cite{riviere2008discontinuous}: 
        \begin{equation}
    \forall \bm{\theta}_h \in \bm{X}_h, \,\, a_{\mathcal{D}}(\bm{u}^n -Q_h \bm{u}^n , \bm{\theta}_h) = 0, \quad \mathrm{and} \quad \| \bm{u}^n - Q_h \bm{u}^n \| \leq Ch^{k+1}|\bm{u}^n|_{H^{k+1}(\Omega)}. \label{eq:elliptic_projection}
        \end{equation}
         Let $\bm{\theta}_h = \Pi_h \bm{u}^n - Q_h \bm{u}^n$ in \eqref{eq:aux_1_dg}. We obtain 
        \begin{align}
             a_{\mathcal{D}} (\Pi_h \bm{u}^n - \bm{u}^n, \bm{U}_h^n) & = a_{\mathcal{D}} (\Pi_h \bm{u}^n - Q_h \bm{u}^n, \bm{U}_h^n) \nonumber \\ & =  (\bm{\chi}^n , \Pi_h \bm{u}^n - Q_h\bm{u}^n ) + b(\Pi_h \bm{u}^n - Q_h\bm{u}^n, P_h^n).
        \end{align}
 Using \eqref{eq:boundformb}, we obtain 
        \[
        | b(\Pi_h \bm{u}^n - Q_h\bm{u}^n, P_h^n)| \leq C  \|\Pi_h \bm{u}^n - Q_h \bm{u}^n \| |P_h^n|_{\DG}. 
        \]
        Note that with \eqref{eq:bd_phN}, the inverse estimate \eqref{eq:inverse_estimate_dG}, \eqref{eq:l2_proj_approximation}, \eqref{eq:error_dg_aux}, \eqref{eq:regularity_assumption}, and triangle inequality:  
\begin{align}
           |P_h^n|_{\DG} \leq & |P_h^n - \pi_h P^n|_{\DG} + |\pi_h P^n|_{\DG} \leq Ch^{-1}\|P_h^n - \pi_h P^n\|+ C| P^n|_{H^1(\Omega)} 
\label{eq:int99}\\  
             \leq & C (\|\bm{e}_h^n\| + h^{k+1}|\bm{u}^n|_{H^{k+1}(\Omega)}).\label{eq:dg_bound_ph}
        \end{align}
        Hence, Cauchy-Schwarz's inequality, the above bounds, the assumption that $\bm{u} \in L^{\infty}(0,T;H^{k+1}(\Omega)^d)$,  and Young's inequality yield: 
        \begin{align}
        & | a_{\mathcal{D}} (\Pi_h \bm{u}^n - \bm{u}^n, \bm{U}_h^n) |  \leq C  \|\Pi_h \bm{u}^n  - Q_h \bm{u}^n \|(\|\bm{e}_h^n\| + h^{k+1}|\bm{u}^n|_{H^{k+1}(\Omega)})  \nonumber  \\ & \leq C h^{k+1 } |\bm{u}^n|_{H^{k+1}(\Omega)}(\|\bm{e}_h^n\| + h^{k+1}|\bm{u}^n|_{H^{k+1}(\Omega)}) \leq  \epsilon  \|\bm{e}_h^n \|^2 + C\left(\frac{1}{\epsilon} + 1  \right)  h^{2k+2}. \label{eq:pih_un_u_dual}
         \end{align} 
    Consider now the second term in \eqref{eq:breaking_up_aD_first_err_dual}.  Letting $\bm{\theta}_h = \bm{e}_h^n$ in \eqref{eq:aux_1_dg},  we obtain
        \begin{equation}
            a_{\mathcal{D}}(\bm{e}_h^n,\bm{U}_h^n) = \| \bm{e}_h^n \|^2 + (\bm{\chi}^n - \bm{e}_h^n , \bm{e}_h^n) +  b(\bm{e}_h^n, P_h^n). \label{eq:rewriting_a_ep_dual}
        \end{equation}
        With  \eqref{eq:bd_b_dual}, \eqref{eq:bounding_time_err_dual},\eqref{eq:expanding_aepsi_dual}, \eqref{eq:breaking_up_aD_first_err_dual}, \eqref{eq:pih_un_u_dual}, and \eqref{eq:rewriting_a_ep_dual}, the equality \eqref{eq:first_error_eq_dual} becomes 
        \begin{multline}\label{eq:second_err_eq_dual}
            \frac{1}{2} \left( a_{\mathcal{D}}(\bm{U}_h^n, \bm{U}_h^n)  - a_{\mathcal{D}}( \bm{U}_h^{n-1}, \bm{U}_h^{n-1}) + a_{\mathcal{D}}(\bm{U}_h^{n} - \bm{U}_h^{n-1}, \bm{U}_h^{n} - \bm{U}_h^{n-1})\right) + \tau \mu \| \bm{e}_h^n \|^2 \\ \leq   2\epsilon \tau \mu \|\bm{e}_h^n\|^2  + C\tau^2 \int_{t^{n-1}}^{t^n}  \| \partial_{tt}  \bm{u} \|^2 + C \tau \|\bm{U}_h^n\|_{\DG}^2   
 + C \left(\mu\left(1+\frac{1}{\epsilon}\right) + 1+ \frac{1}{\epsilon\mu}\right) \tau h^{2k+2} 
\\ - \tau \tilde{R}_{\mathcal{C}}(\bm{U}_h^n)
-  \tau \mu a_{\mathcal{D}}(\err - \bm{e}_h^n, \bm{U}_h^n) -\tau \mu (\Pi_h \bm{u}^n - \bm{u}^n, \bm{e}_h^n ) - \tau \mu b(\bm{e}_h^n, P_h^n).
        \end{multline}
We now handle the last three terms in the above bound. 
        Let $\bm{\theta}_h = \err - \bm{e}_h^n$ in \eqref{eq:aux_1_dg}. 
        \begin{align} 
          a_{\mathcal{D}}( \err - \bm{e}_h^n,\bm{U}_h^n)
        & = (\bm{e}_h^n, \err - \bm{e}_h^n)  +  (\Pi_h \bm{u}^n - \bm{u}^n, \err - \bm{e}_h^n) +   b(\err - \bm{e}_h^n, P_h^n). 
        \end{align} 
        Recall that by \eqref{eq:sec_err_eq}, \eqref{eq:div_error_2}, \eqref{eq:lift_prop_g}, and the assumption that $\tilde{\sigma} \geq  \tilde{M}_k^2$,  we have 
        \begin{equation}
             (\bm{e}_h^n, \err -\bm{e}_h^n) 
            = -\tau  b(\bm{e}_h^n, \phi_h^n)   = \tau^2 \sum_{e\in \Gamma_h} \frac{\tilde{\sigma}}{h_e} \| [\phi_h^n]\|_{L^2(e)}^2  - \tau^2 \| \bm{G}_h([\phi_h^n])\|^2  \geq 0.
        \end{equation}
         Using  Cauchy-Schwarz's inequality, \eqref{eq:boundformb},  and \eqref{eq:approximation_prop_1}, we have the following bound. 
        \begin{equation}
           | (\Pi_h \bm{u}^n - \bm{u}^n, \err - \bm{e}_h^n) |  +  |b(\err - \bm{e}_h^n , P_h^n) | 
 \leq C \|\err - \bm{e}_h^n \| ( h^{k+1}|\bm{u}^n|_{H^{k+1} (\Omega)} + \vert P_h^n\vert_{\DG}).   \label{eq:bound105}
        \end{equation}
        With \eqref{eq:sec_err_eq}  and \eqref{eq:boundformb}, we immediately obtain:
\begin{equation}\label{eq:intres10}
\Vert  \err - \bm{e}_h^n\Vert \leq C \tau \vert \phi_h^n \vert_{\mathrm{DG}}.
\end{equation}
Therefore with  \eqref{eq:dg_bound_ph}, the assumption that $\bm{u} \in L^{\infty}(0,T; H^{k+1}(\Omega)^d)$,   and Young's inequality, the  bound \eqref{eq:bound105} becomes 
        \begin{align}
         | (\Pi_h \bm{u}^n - \bm{u}^n, \err - \bm{e}_h^n) |  & +   |b(\err -\bm{e}_h^n, P_h^n)|\leq  C \tau \vert \phi_h^n\vert_{\DG}( h^{k+1}|\bm{u}^n|_{H^{k+1} (\Omega)} + \| \bm{e}_h^n\|) \nonumber  \\ & \leq  \epsilon \|\bm{e}_h^n\|^2+ C \left(1+ \frac{1}{\epsilon}\right) \tau^2 |\phi_h^n|^2_{\DG}  + Ch^{2k+2}.
        \end{align}
With \eqref{eq:approximation_prop_1}, we have 
        \[ |(\Pi_h \bm{u}^n - \bm{u}^n, \bm{e}_h^n )| \leq  \epsilon \|\bm{e}_h^n\|^2 + \frac{C}{\epsilon} h^{2k+2} |\bm{u}^n|^2_{H^{k+1}(\Omega)}.\]
        To handle the last term in \eqref{eq:second_err_eq_dual}, we use \eqref{eq:div_error_2}, \eqref{eq:lift_prop_g}, and  \eqref{eq:dg_bound_ph}. 
        \begin{align}
        & | b(\bm{e}_h^n , P_h^n )|  = \left| -\tau  \sum_{e \in \Gamma_h} \frac{\tilde{\sigma}}{h_e} \int_e [\phi_h^n][P_h^n] +  \tau  (\bm{G}_h([\phi_h^n]),\bm{G}_h([P_h^n]))\right| \nonumber \\
            & \leq C\tau   \vert \phi_h^n \vert_{\DG} \vert P_h^n \vert_{\DG} \leq  \epsilon \|\bm{e}_h^n\|^2  + C \left(1+ \frac{1}{\epsilon}\right) \tau^2 |\phi_h^n|^2_{\DG}+ Ch^{2k+2}.
        \end{align} 
        With the above bounds combined, \eqref{eq:second_err_eq_dual} becomes  
        \begin{multline}
            \frac{1}{2} \left( a_{\mathcal{D}}(\bm{U}_h^n, \bm{U}_h^n)  - a_{\mathcal{D}}( \bm{U}_h^{n-1}, \bm{U}_h^{n-1}) + a_{\mathcal{D}}(\bm{U}_h^{n} - \bm{U}_h^{n-1}, \bm{U}_h^{n} - \bm{U}_h^{n-1})\right) + \tau \mu \| \bm{e}_h^n \|^2  \\ \leq 5\epsilon \tau \mu \|\bm{e}_h^n\|^2 + C  \mu \left(1+ \frac{1}{\epsilon}\right)\tau^3 \vert \phi_h^n \vert_{\DG}^2  - \tau \tilde{R}_{\mathcal{C}}(\bm{U}_h^n)  + C\tau^2 \int_{t^{n-1}}^{t^n}  \| \partial_{tt} \bm{u}^n\|^2 \\ 
+ C \tau \|\bm{U}_h^n\|_{\DG}^2   + C\left(\mu(1+\frac{1}{\epsilon}) + 1+ \frac{1}{\epsilon \mu} \right)\tau h^{2k+2}. \label{eq:improved_estimate_semi_final}
        \end{multline}
        We now want to handle the nonlinear term $\tilde{R}_{\mathcal{C}}(\bm{U}_h^n)$. We write
        \begin{align}
        \tilde{R}_{\mathcal{C}}(\bm{U}_h^n) 
        & =  a_\mathcal{C}(\bm{u}_h^{n-1}; \bm{u}_h^{n-1}, \bm{v}_h^n - \bm{u}^n, \bm{U}_h^{n}) - a_\mathcal{C}(\bm{u}^{n}; \bm{u}^{n} - \bm{u}_h^{n-1}, \bm{u}^n, \bm{U}_h^{n})  = T^n_1 + T^n_2.  \nonumber
        \end{align}
To handle $T^n_1$, we resort to \eqref{eq:integration_by_parts_c}. We have, 
\[
T^n_1 
 = -\bar{a}_\mathcal{C}(\bm{u}_h^{n-1}; \bm{u}_h^{n-1}, \bm{U}_h^n -\bm{U}^n, \bm{v}^n_h - \bm{u}^n) - \bar{a}_\mathcal{C}(\bm{u}_h^{n-1}; \bm{u}_h^{n-1},\bm{U}^n, \bm{v}^n_h - \bm{u}^n).
\]
The upwind term in the second term in the expression for $T^n_1$ vanishes and it becomes linear in the second argument. Thus, we can further split it in the expression for $T^n_1$: 
\begin{align}
& T^n_1  =-\bar{a}_\mathcal{C}(\bm{u}_h^{n-1}; \bm{u}_h^{n-1}, \bm{U}_h^n -\bm{U}^n, \bm{v}^n_h - \bm{u}^n) - \bar{a}_\mathcal{C}(\bm{u}_h^{n-1}; \bm{e}_h^{n-1},\bm{U}^n, \bm{v}^n_h - \bm{u}^n) \nonumber\\
&  -  \bar{a}_\mathcal{C}(\bm{u}_h^{n-1}; \Pi_h \bm{u}^{n-1} - \bm{u}^{n-1},\bm{U}^n, \bm{v}^n_h - \bm{u}^n) - \bar{a}_\mathcal{C}(\bm{u}_h^{n-1}; \bm{u}^{n-1},\bm{U}^n, \bm{v}^n_h - \bm{u}^n) = \sum_{i=1}^4 A^n_i.  \nonumber
 \end{align}
We postpone handling $A^n_1$ till the end as it is more intricate than the other terms. We begin with $A^n_2$. With \eqref{eq:integration_by_parts_c}, we write 
\begin{align*}
A^n_2 = - \bar{a}_\mathcal{C}(&\bm{e}_h^{n-1}; \bm{e}_h^{n-1},\bm{U}^n, \bm{v}^n_h - \bm{u}^n)  =  a_\mathcal{C}(\bm{e}_h^{n-1}; \bm{e}_h^{n-1}, \bm{v}_h^n - \bm{u}^n , \bm{U}^n)\\
& = a_\mathcal{C}(\bm{e}_h^{n-1}; \bm{e}_h^{n-1}, \err , \bm{U}^n) + a_\mathcal{C}(\bm{e}_h^{n-1}; \bm{e}_h^{n-1},\Pi_h \bm{u}^n  - \bm{u}^n, \bm{U}^n) = A^n_{2,1} + A^n_{2,2}. 
\end{align*}
The term $A^n_{2,1}$ reads
\begin{multline}
 A^n_{2,1} =  \sum_{E \in \mesh_h} \int_E (\bm{e}_h^{n-1} \cdot \nabla \err) \cdot \bm{U}^n  + \frac{1}{2} b(\bm{e}_h^{n-1}, \err \cdot \bm{U}^n) \\ +   \sum_{E \in \mesh_h} \int_{\partial E_{-}^{\bm{e}^{n-1}_h}} | \{\bm{e}^{n-1}_h\} \cdot \bm{n}_E | ((\err)^{\mathrm{int}} - (\err)^{\mathrm{ext}}) \cdot (\bm{U}^n)^{\mathrm{int}}. \nonumber 
\end{multline}
 With H\"{o}lder's inequality and \eqref{eq:regularity_infinity_norm}, we bound the first term: 
     \begin{align*}
     \sum_{E \in \mesh_h} \left|\int_E (\bm{e}_h^{n-1} \cdot \nabla \err) \cdot \bm{U}^n \right|  &\leq \|\bm{e}_h^{n-1} \| \|\nabla_h \err \|\|\bm{U}^n \|_{L^{\infty}(\Omega)} \leq C \|\bm{e}_h^{n-1} \| \| \err \|_{\DG} \|\bm{\chi}^n\|.
     \end{align*}  
We use \eqref{eq:first_estimate_nonlinear_form} to bound the second term in $A^n_{2,1}$. With \eqref{eq:regularity_infinity_norm}, we have 
\begin{equation}
    \frac{1}{2} |b(\bm{e}_h^{n-1}, \err\cdot \bm{U}^n)| \leq C\|\bm{e}_h^{n-1}\|\vertiii{\bm{U}^n}\|\err\|_{\DG}  \leq C\|\bm{e}_h^{n-1}\| \|\bm{\chi}^n\|\|\err\|_{\DG}.\nonumber
\end{equation}
 With trace estimate \eqref{eq:trace_ineq_discrete} and \eqref{eq:regularity_infinity_norm}, the upwind term in $ A^n_{2,1}$ is bounded by: 
    \begin{align*}
     C\|\bm{e}_h^{n-1} \| \|\err\|_{\DG} \|\bm{U}^n \|_{L^{\infty}(\Omega)} \leq C \| \bm{e}_h^{n-1} \| \| \err \|_{\DG} \| \bm{\chi}^n \|.  
    \end{align*}
Combining the above bounds and applying Young's and triangle inequality yields:
\begin{equation}
|A^n_{2,1} | \leq  \epsilon \mu \|\bm{e}_h^n\|^2 + C \left(1+ \frac{1}{\epsilon\mu}\right)\|\bm{e}_h^{n-1}\|^2 \|\err\|_{\DG}^2 + Ch^{2k+2}. \label{eq:bd_a_21}
\end{equation}
We now handle $A^n_{2,2}$, it reads 
\begin{multline}
 A^n_{2,2} = \sum_{E \in \mesh_h} \int_{E} (\bm{e}^{n-1}_h \cdot \nabla (\Pi_h\bm{u}^n - \bm{u}^n)) \cdot \bm{U}^n  + \frac{1}{2} b(\bm{e}^{n-1}_h, (\Pi_h \bm{u}^n - \bm{u}^n) \cdot \bm{U}^n)\\  +  \sum_{E \in \mesh_h} \int_{\partial E_{-}^{\bm{e}^{n-1}_h}} | \{\bm{e}^{n-1}_h\} \cdot \bm{n}_E | ((\Pi_h \bm{u}^n - \bm{u}^n)^{\mathrm{int}} - (\Pi_h \bm{u}^n - \bm{u}^n)^{\mathrm{ext}} )\cdot (\bm{U}^n)^{\mathrm{int}}. \nonumber
\end{multline}
With Holder's inequality, approximation property \eqref{eq:approximation_prop_2}, and \eqref{eq:regularity_infinity_norm}: 
\begin{align}
 \sum_{E \in \mesh_h} \int_{E} \left \vert (\bm{e}^{n-1}_h \cdot \nabla (\Pi_h\bm{u}^n - \bm{u}^n)) \cdot \bm{U}^n \right\vert &\leq  \|\bm{e}_h^{n-1} \| \|\nabla_h (\Pi_h \bm{u}^n - \bm{u}^n) \| \| \bm{U}^n \|_{L^{\infty}(\Omega)} \nonumber \\ &\leq  Ch^{k} |\bm{u}^n|_{H^{k+1}(\Omega)}\|\bm{e}_h^{n-1}\|\|\bm{\chi}^n\|. \nonumber
\end{align}
For the second term in $ A^n_{2,2}$, 
applying Holder's inequality, \eqref{eq:approximation_prop_1}-\eqref{eq:approximation_prop_2}, \eqref{eq:regularity_infinity_norm}, inverse estimate \eqref{eq:inverse_estimate_dG} and trace estimates \eqref{eq:trace_ineq_continuous}-\eqref{eq:trace_ineq_discrete}, we obtain: 
\begin{align}
\frac{1}{2}|b(\bm{e}_h^{n-1}, (\Pi_h \bm{u}^n - \bm{u}^n)\cdot \bm{U}^n)|  \leq C h^{-1} \| \bm{e}_h^{n-1} \| \| \Pi_h \bm{u}^n - \bm{u}^n \| \| \bm{U}^n \|_{L^{\infty}(\Omega)} \nonumber \\ + C \| \bm{U}^n \|_{L^{\infty}(\Omega)}(h^{-1}\| \Pi_h \bm{u}^n - \bm{u}^n  \| +  \| \nabla_h(\Pi_h \bm{u}^n - \bm{u}^n )\| ) \|\bm{e}_h^{n-1}\| \nonumber \\ 
\leq Ch^{k} |\bm{u}^n|_{H^{k+1}(\Omega)}\|\bm{e}_h^{n-1}\|\|\bm{\chi}^n\|. \label{eq:bounding_b_eh_pih_u}
\end{align}
The upwind term in $ A^n_{2,2}$ is bounded similarly by  \begin{align*}
C  \|\bm{e}_h^{n-1}\| \|\bm{U}^n \|_{L^{\infty}(\Omega)} (h^{-1} \| \bm{u}^n - \Pi_h \bm{u}^n \| + \| \nabla_h(\bm{u}^n - \Pi_h \bm{u}^n )\| ) \\ \leq C  h^{k} |\bm{u}^n|_{H^{k+1}(\Omega)}\|\bm{e}_h^{n-1}\|\|\bm{\chi}^n\|.
\end{align*}
Hence, with the assumption that $\bm{u} \in L^{\infty}(0,T;H^{k+1}(\Omega)^d)$ and Young's inequality, we attain the following bound for $A^n_{2,2}$. 
\begin{equation} \label{eq:bd_a_22}
    |A^n_{2,2}| \leq \epsilon \mu \| \bm{e}_h^n \|^2 + C\left( \frac{1}{\epsilon \mu} +  1 \right) h^{2k}\|\bm{e}_h^{n-1}\|^2 + Ch^{2k+2}.
\end{equation}
With \eqref{eq:bd_a_21} and \eqref{eq:bd_a_22}, we obtain a bound on $A_2^n$: 
\begin{equation}
 |A^n_2| \leq 2\epsilon \mu \|\bm{e}_h^n\|^2  + C\left( \frac{1}{\epsilon \mu} +  1 \right) \left(  h^{2k}\|\bm{e}_h^{n-1}\|^2 + \|\bm{e}_h^{n-1}\|^2 \|\err\|_{\DG}^2 \right) + Ch^{2k+2}. \label{eq:bd_A_2}
\end{equation}
We now focus on $ A_3^n$ and write
\begin{align*}
A^n_3 & = - \sum_{E \in \mesh_h} \int_{E} ( (\Pi_h \bm{u}^{n-1} - \bm{u}^{n-1}) \cdot \nabla ((\bm{U}^n - \bm{U}_h^n ) + \bm{U}_h^n ) )\cdot (\bm{v}_h^n - \bm{u}^n ) \\& \quad - \frac{1}{2} \sum_{E \in \mesh_h} \int_E (\nabla \cdot (\Pi_h\bm{u}^{n-1}- \bm{u}^{n-1}))(\bm{U}^n \cdot (\bm{v}_h^n - \bm{u}^n )) \\ &\quad  + \frac{1}{2} \sum_{e\in \Gamma_h \cup \partial \Omega}  \int_e [\Pi_h \bm{u}^{n-1} - \bm{u}^{n-1}] \cdot \bm{n}_e \left(\{ \bm{U}^n \cdot \err \} +  \{ \bm{U}^n \cdot (\Pi_h \bm{u}^n - \bm{u}^n)\}\right)  = \sum_{i=1}^3 A^n_{3,i}. 
\end{align*}
Applying H\"{o}lder's inequality and trace estimates yields: 
\begin{align*}
 |A^n_{3,1}| &\leq C\|\Pi_h \bm{u}^{n-1} - \bm{u}^{n-1} \|_{L^{\infty}(\Omega)} (\|\nabla_h (\bm{U}^n - \bm{U}_h^n) \| + \|\bm{U}_h^n\|_{\DG}) \| \bm{v}_h^n - \bm{u}^n \|, \\ 
  |A^n_{3,2}| & \leq  C \|\nabla_h (\Pi_h \bm{u}^{n-1} - \bm{u}^{n-1}) \| \| \bm{U}^n \|_{L^{\infty}(\Omega)} \|\bm{v}_h^n - \bm{u}^n \|, \\   
  |A^n_{3,3} |& \leq  C( h^{-1}\|\Pi_h \bm{u}^{n-1} - \bm{u}^{n-1} \| + \|\nabla_h (\Pi_h \bm{u}^{n-1} - \bm{u}^{n-1})\| )\| \bm{U}^n \|_{L^{\infty}(\Omega)} \| \err \| \\ 
& \quad + C( h^{-1/2}\|\Pi_h \bm{u}^{n-1} - \bm{u}^{n-1} \| + h^{1/2}\|\nabla_h (\Pi_h \bm{u}^{n-1} - \bm{u}^{n-1})\|) \| \bm{U}^n \|_{L^{\infty}(\Omega)} \\ & \quad \quad ( h^{-1/2}\|\Pi_h \bm{u}^{n} - \bm{u}^{n} \| + h^{1/2}\|\nabla_h (\Pi_h \bm{u}^{n} - \bm{u}^{n})\|) .
\end{align*}
Using \eqref{eq:approximation_prop_1}, \eqref{eq:approximation_prop_2}, \eqref{eq:stability_linf_pih}, \eqref{eq:error_dg_aux},  and \eqref{eq:regularity_infinity_norm}, we obtain: 
\begin{align*}
    |A^n_3| &\leq C (h  \vertiii{\bm{u}^{n-1}} \|\bm{\chi}^n \| +  \vertiii{\bm{u}^{n-1}}  \| \bm{U}_h^n\|_{\DG} + h^{k}|\bm{u}^{n-1}|_{H^{k+1}(\Omega)}\|\bm{\chi}^n\|)\|\bm{v}_h^n - \bm{u}^n\| \nonumber \\ &\quad  + C h^{k} |\bm{u}^{n-1}|_{H^{k+1}(\Omega)}\|\bm{\chi}^n\|\| \err\| + C h^{2k+1}| \bm{u}^n|_{H^{k+1}(\Omega)} | \bm{u}^{n-1}|_{H^{k+1}(\Omega)}\| \bm{\chi}^{n} \|.
\end{align*}
In view of  \eqref{eq:intres10} and  the triangle inequality, we have 
\begin{align}
 \| \err \| \leq \| \err - \bm{e}_h^n  \| + \|\bm{e}_h^n \| 
\leq C \tau |\phi_h^n|_{\DG} + \| \bm{e}_h^n \|.  \label{eq:bd_l2_ehn}
\end{align}
Similarly,  with \eqref{eq:update_velocity_1}, \eqref{eq:boundformb} and \eqref{eq:approximation_prop_1},  we obtain 
\begin{align}
\|\bm{v}_h^n - \bm{u}^n \| 
& \leq C\tau \vert \phi_h^n \vert_{\DG}+  \|\bm{e}_h^n\| + Ch^{k+1}|\bm{u}^n|_{H^{k+1}(\Omega)}. \label{eq:bd_vn_un}
\end{align}
Using \eqref{eq:bd_l2_ehn}, \eqref{eq:bd_vn_un}, and the assumption that $\bm{u} \in L^{\infty}(0,T;H^{k+1}(\Omega)^d)$, we have:  
\begin{align*}
     |A^n_3| 
     \leq  C (h\|\bm{\chi}^n \| + \| \bm{U}_h^n\|_{\DG}  &)( \|\bm{e}_h^n\|  +\tau  \vert \phi_h^n \vert_{\DG} + h^{k+1}) +  C h^{2k+1} \| \bm{\chi}^{n} \|. 
    \end{align*}
 With \eqref{eq:boundchin}, this yields
\[
|A_3^n| \leq
C (h\|\bm{e}_h^n\| + h^{k+2}  + \|\bm{U}_h^n \|_{\DG}) (\|\bm{e}_h^n\| + h^{k+1} + \tau |\phi_h^n |_{\DG})
+ C h^{3k+2} + C h^{2k+1} \Vert \bm{e}_h^n\Vert.  
\]
To handle $A^n_4$, we note that it reduces to
\begin{align*}
      A^n_4 = -  \sum_{E \in \mesh_h }\int_E (\bm{u}^{n-1} \cdot \nabla \bm{U}^n) \cdot (\bm{v}_h^n - \bm{u}^n ).  
 \end{align*}
 Thus, with H\"{o}lder's inequality, \eqref{eq:error_dg_aux},  \eqref{eq:boundchin}, \eqref{eq:bd_vn_un}, and $\bm{u} \in L^{\infty}(0,T; H^{k+1}(\Omega)^d)$: 
 \begin{align*}
 |A^n_4|  & \leq  \| \bm{u}^{n-1}\|_{L^{\infty}(\Omega)} ( \|\nabla_h (\bm{U}^n - \bm{U}_h^n ) \|+ \|\nabla_h \bm{U}_h^n\| ) \|\bm{v}_h^n - \bm{u}^n  \| \\ 
& \quad \leq C (h\|\bm{e}_h^n\| + h^{k+2}  + \|\bm{U}_h^n \|_{\DG}) (\|\bm{e}_h^n\| + h^{k+1} + \tau |\phi_h^n |_{\DG}).  
 \end{align*}
Applying Young's inequality, we obtain the following bound for $|A_3^n| + |A_4^n|$.  
\begin{multline} \label{eq:bd_A_3}
 |A^n_3| + |A^n_4| \leq \epsilon  \mu \| \bm{e}_h^n\|^2+ C\left(\frac{1}{\epsilon\mu} +1 \right) \|\bm{U}_h^n\|_{\DG}^2+  C \left(\frac{h^2}{\epsilon \mu} + 1\right) h^{2k+2} \\ + \frac{C}{\epsilon \mu} h^{2} \|\bm{e}_h^n \|^2 + C\tau^2\left(\frac{h^2}{\epsilon \mu} + 1\right) |\phi_h^n |^2_{\DG}.
\end{multline}
We now consider $A^n_1$. It reads:
\begin{align*}
 A^n_1 &= - \sum_{E \in \mesh_h} \int_E (\bm{u}_h^{n-1} \cdot \nabla(\bm{U}_h^n - \bm{U}^n ))\cdot(\bm{v}_h^n - \bm{u}^n ) - \frac{1}{2}b(\bm{u}_h^{n-1}, (\bm{U}_h^n - \bm{U}^n)\cdot (\bm{v}_h^n - \bm{u}^n)) \\& \quad -   \sum_{E\in\mesh_h} \int_{\partial E_{-}^{\bm{u}_h^{n-1}} \backslash \partial \Omega} |\{\bm{u}_h^{n-1}\}\cdot \bm{n}_E| ((\bm{U}_h^n - \bm{U}^n)^{\mathrm{int} } - (\bm{U}_h^n - \bm{U}^n)^{\mathrm{ext}})\cdot (\bm{v}_h^n - \bm{u}^n)^{\mathrm{ext}} \\& \quad + \frac{1}{2}  \sum_{e\in \partial \Omega} \int_e (|\bm{u}_h^{n-1} \cdot \bm{n}_e|- \bm{u}_h^{n-1} \cdot \bm{n}_e ) (\bm{U}_h^n -\bm{U}^{n})\cdot (\bm{v}_h^n - \bm{u}^n)  =  \sum_{i=1}^4 A^n_{1,i}.
\end{align*}
Writing $\bm{u}_h^{n-1} = \bm{e}_h^{n-1} + \Pi_h \bm{u}^{n-1}$ and using H\"{o}lder's  inequality, we obtain 
\begin{align*}
 |A^n_{1,1}| & \leq  \|\nabla_h(\bm{U}_h^n - \bm{U}^n) \|( \|\bm{e}_h^{n-1}\|_{L^3(\Omega)} \|\bm{v}_h^n - \bm{u}^n \|_{L^{6}(\Omega)}   + \| \Pi_h \bm{u}^{n-1}\|_{L^{\infty}(\Omega)}\|\bm{v}_h^n - \bm{u}^n \|).
\end{align*} 
By using \eqref{eq:discreter_poincare},\eqref{eq:approximation_prop_1}, and a Sobolev embedding result, 
we obtain 
\begin{align}
\| \bm{v}_h^n - \bm{u}^n \|_{L^{6}(\Omega)} & \leq \| \bm{v}_h^n - \Pi_h \bm{u}^n  \|_{L^{6}(\Omega)} + \| \Pi_h \bm{u}^n - \bm{u}^n \|_{L^{6}(\Omega)} \nonumber \\ &
\leq C \| \err \|_{\DG} +  Ch^{k}\vert \bm{u}^n\vert_{W^{k,6}(\Omega)} \leq C\| \err \|_{\DG} +  Ch^{k}\vert \bm{u}^n\vert_{H^{k+1}(\Omega)}  . \label{eq:bd_vh_uh_lr} 
\end{align}
With bounds \eqref{eq:bd_vh_uh_lr}, \eqref{eq:bd_vn_un}, \eqref{eq:inverse_estimate}, \eqref{eq:stability_linf_pih},  and \eqref{eq:error_dg_aux}, we have 
\begin{align*}
 |A^n_{1,1}| & \leq C \|\bm{\chi}^n \|\|\bm{e}_h^{n-1}\|(\|\err\|_{\DG} + h^{k}|\bm{u}^n|_{H^{k+1}(\Omega)})  \\ & \quad + C h \|\bm{\chi}^n \| \vertiii{\bm{u}^{n-1}}(\| \bm{e}_h^n\| + h^{k+1} + \tau \vert \phi_h^n \vert_{\DG}).
\end{align*}
We apply  \eqref{eq:boundchin},  the assumption that $\bm{u} \in L^{\infty}(0,T; H^{k+1}(\Omega)^d)$ and Young's inequality. 
\begin{align}
& |A^n_{1,1}|
 \leq \epsilon \mu  \|\bm{e}_h^n\|^2 + C\left(\frac{1}{\epsilon\mu} + 1 \right)h^2 (\| \bm{e}_h^n\|^2 + h^{2k+2} + \tau^2 |\phi_h^n|^2_{\DG})\nonumber \\ & +C \left( \frac{1}{\epsilon\mu} + 1\right) (\|\bm{e}_h^{n-1}\|^2\| \err \|_{\DG}^2 + h^{2k}\| \bm{e}_h^{n-1} \|^2) + Ch^{2k+2}.  
\label{eq:bd_A_111} 
\end{align}
To handle $A^n_{1,2}$, we split it into:
\begin{align*}
 A^n_{1,2} = -\frac{1}{2} b(\bm{u}_h^{n-1}, (\bm{U}_h^n - \bm{U}^n)\cdot \err ) - \frac{1}{2} b(\bm{u}_h^{n-1},(\bm{U}_h^n - \bm{U}^n)\cdot(\Pi_h \bm{u}^n - \bm{u}^n )).
\end{align*}
 With Holder's inequality and trace estimate \eqref{eq:trace_ineq_discrete}, we have the bound: 
 \begin{multline*}
\frac{1}{2}|b(\bm{u}_h^{n-1}, (\bm{U}_h^n - \bm{U}^n)\cdot \err)| \leq C  \| \nabla_h \bm{u}_h^{n-1} \| \|\bm{U}_h^n - \bm{U}^n \|_{L^3(\Omega)} \| \err \|_{L^6(\Omega)} \\   \nonumber \quad + C \| \err \|_{L^6(\Omega)}\| \bm{u}_h^{n-1}\|_{L^3(\Omega)}h^{-1}  \left( \|\bm{U}_h^n - \bm{U}^n  \| + h \| \nabla_h (\bm{U}_h^n - \bm{U}^n )\|\right).   
 \end{multline*}
 Note that with Poincare's inequality \eqref{eq:discreter_poincare} and \eqref{eq:error_dg_aux}, we have 
 \begin{equation}
     \| \bm{U}_h^n - \bm{U}^n \|_{L^3(\Omega)} \leq C_P \| \bm{U}_h^n - \bm{U}^n\|_{\DG} \leq C h \|\bm{\chi}^n\|.\label{eq:bd_diff_L4}
 \end{equation} 
 With \eqref{eq:bd_diff_L4}, \eqref{eq:error_dg_aux}, \eqref{eq:discreter_poincare}, triangle inequality, \eqref{eq:inverse_estimate_dG},  and \eqref{eq:inverse_estimate}, we obtain: 
 \begin{align}
    \frac{1}{2}|b(&\bm{u}_h^{n-1}, (\bm{U}_h^n - \bm{U}^n)\cdot \err)|  \leq C( h^{-1}\| \bm{e}_h^{n-1}\| + \Vert \nabla_h (\Pi_h \bm{u}^{n-1}) \Vert) h \| \bm{\chi}^n \|\|\err\|_{\DG}\nonumber \\ &  + C \|\err\|_{\DG}(h^{-d/6}\|\bm{e}_h^{n-1}\|+\| \Pi_h \bm{u}^{n-1} \|_{L^3(\Omega)}) h\|\bm{\chi}^{n}\| \nonumber \\ 
     & \leq C( \|\bm{e}_h^n\|  +h^{k+1}|\bm{u}^n|_{H^{k+1}(\Omega)}) \left( \|\bm{e}_h^{n-1} \| +h \|\bm{u}^{n-1}\|_{W^{1,3}(\Omega)}\right)\| \err\|_{\DG}. \label{eq:bd_first_term_A21}
 \end{align}
For the second term in the expression for $ A^n_{1,2}$, we write $\bm{u}_h^{n-1} = \bm{e}_h^{n-1} + \Pi_h \bm{u}^{n-1}$.
With Holder's inequality, inverse estimate \eqref{eq:inverse_estimate_dG} and trace estimates \eqref{eq:trace_ineq_continuous}-\eqref{eq:trace_ineq_discrete}, \eqref{eq:error_dg_aux}, \eqref{eq:bd_diff_L4} and \eqref{eq:approximation_prop_1} - \eqref{eq:approximation_prop_2}, this term is bounded by  
\begin{align}
 &  C (h^{-1}\|\bm{e}_h^{n-1}\| \|\bm{U}_h^n - \bm{U}^n\|_{L^3(\Omega)} +  \| \nabla_h \cdot \Pi_h \bm{u}^{n-1}\|_{L^3(\Omega)}\| \bm{U}_h^n - \bm{U}^n\|)\|\Pi_h \bm{u}^n - \bm{u}^n\|_{L^6(\Omega)} 
\nonumber \\ &  + C \|\Pi_h \bm{u}^n - \bm{u}^n \|_{L^{\infty}(\Omega)}( h^{-1}\|\bm{U}_h^n - \bm{U}^n \| + \|\nabla_h(\bm{U} - \bm{U}_h^n)\|)\| \bm{e}_h^{n-1}\|\nonumber \\ &+ C \|\Pi_h \bm{u}^{n-1} \|_{L^{\infty}(\Omega)}(h^{-1/2}\|\bm{U}_h^n - \bm{U}^n \| +  h^{1/2}\|\nabla_h(\bm{U}^n - \bm{U}_h^n)\|) \nonumber\\  &\quad \quad  (h^{-1/2}\| \Pi_h \bm{u}^n - \bm{u}^n \| + h^{1/2}\| \nabla_h(\Pi_h \bm{u}^n - \bm{u}^n)\|)\nonumber\\ 
& \leq C h^{k}\|\bm{e}_h^{n-1}\|\| \bm{\chi}^n\||\bm{u}^n|_{W^{k,6}(\Omega)} + C h^{k + 2}\|\bm{u}^{n-1}\|_{W^{1,3}(\Omega)} |\bm{u}^n|_{W^{k,6}(\Omega)}\| \bm{\chi}^n \| \nonumber \\ & \quad + Ch\vertiii{\bm{u}^n} \| \bm{\chi}^n\|\|\bm{e}_h^{n-1}\| + C h^{k+2}\vertiii{\bm{u}^{n-1}} \vert\bm{u}^n\vert_{H^{k+1}(\Omega)}\|\bm{\chi}^n\|.
\label{eq:bd_second_term_A21}
\end{align}
With \eqref{eq:bd_first_term_A21},  \eqref{eq:bd_second_term_A21}, Young's inequality, and that 
$\bm{u} \in L^{\infty}(0,T; H^{k+1}(\Omega)^d)$,  we obtain a bound for $ A_{1,2}^n$: 
\begin{align}
|A^n_{1,2}|  \leq \epsilon \mu \|\bm{e}_h^n \|^2 + C\left(\frac{1}{\epsilon \mu} + 1\right) (\|\bm{e}_h^{n-1}\|^2 + h^2 )\|\err\|^2_{\DG} \nonumber   \\  + C\left(\frac{1}{\epsilon \mu}  + 1\right)h^2 \|\bm{e}_h^{n-1}\|^2  + C\left(\frac{h^2}{\epsilon \mu} +1  \right) 
 h^{2k+2}.  
  \label{eq:bd_A_12}
\end{align}
It remains to handle $A^n_{1,3}$ and $A^n_{1,4}$. We will focus on $A^n_{1,3}$. We use the definition of $\partial E_{-}^{\bm{u}_h^{n-1}}$, see \eqref{eq:inflow_boundary}, and wirte $\bm{u}_h^{n-1} = \bm{e}_h^{n-1} + \Pi_h \bm{u}^{n-1}$. 
With trace estimates \eqref{eq:trace_ineq_continuous}-\eqref{eq:trace_ineq_discrete}, we obtain 
 \begin{align*}
     |A^n_{1,3}|& \leq C \|\bm{e}_h^{n-1}\|_{L^{3}(\Omega)}(h^{-1}\|\bm{U}_h^n - \bm{U}^n \| + \|\nabla_h (\bm{U}_h^n - \bm{U}^n) \|)\|\err\|_{L^{6}(\Omega)}  \\ & \quad + C \| \Pi_h \bm{u}^n - \bm{u}^n\|_{L^{\infty}(\Omega)}\| \bm{e}_h^{n-1}\| \|\bm{U}_h^n\|_{\DG}+ C\| \Pi_h \bm{u}^{n-1}\|_{L^{\infty}(\Omega)} \|\bm{U}_h^n\|_{\DG} \|\err \| \\ & \quad + C \|\Pi_h \bm{u}^{n-1} \|_{L^{\infty}(\Omega)}\| \bm{U}_h^n\|_{\DG}\left(\|\Pi_h \bm{u}^n - \bm{u}^n\| + h\|\nabla_h (\Pi_h \bm{u}^n - \bm{u}^n )\|\right).
 \end{align*}
A similar technique is used to bound $ A^n_{1,4}$ and  the above bound also holds for $A^n_{1,4}$. Using \eqref{eq:error_dg_aux}, \eqref{eq:inverse_estimate}, \eqref{eq:discreter_poincare}, \eqref{eq:bd_l2_ehn}, and the assumption $\bm{u} \in L^{\infty}(0,T;H^{k+1}(\Omega)^d)$, we obtain 
\begin{align}
 |A^n_{1,3}| +|A^n_{1,4}| &\leq C h^{- d/6} \|\bm{e}_h^{n-1}\|h\|\bm{\chi}^n\|\|\err\|_{\DG} + C \vertiii{\bm{u}^n}  (\| \bm{e}_h^{n-1} - \bm{e}_h^n\| + \|\bm{e}_h^n\|) \|\bm{U}_h^n\|_{\DG} \nonumber \\ &\quad+ C \vertiii{\bm{u}^{n-1}}\| \bm{U}_h^n\|_{\DG}(\|\bm{e}_h^n\|+ \tau |\phi_h^n|_{\DG} + h^{k+1}| \bm{u}^n |_{H^{k+1}(\Omega)}) \nonumber \\
& \leq \epsilon  \mu \|\bm{e}_h^{n}\|^2 + \epsilon \| \bm{e}_h^{n-1} - \bm{e}_h^n\|^2+ C\left(\frac{1}{\epsilon\mu} +1  \right)\| \bm{e}_h^{n-1}\|^2\| \err\|_{\DG}^2  \nonumber \\ & \quad + C\tau^2 \vert \phi_h^n \vert_{\DG}^2 + C h^{2k+2} +C \left(\frac{1}{\epsilon\mu}+1\right)\|\bm{U}_h^n\|_{\DG}^2 . \label{eq:bd_a14}
\end{align}
With bounds \eqref{eq:bd_A_111}, \eqref{eq:bd_A_12},  and \eqref{eq:bd_a14}, we obtain 
\begin{align}
& |A^n_{1}| \leq  3\epsilon \mu\|\bm{e}_h^n\|^2 +\epsilon\|\bm{e}_h^{n-1} - \bm{e}_h^n\|^2  + C\left(\frac{h^2}{\epsilon \mu} + 1 \right)(\tau^2 |\phi_h^n|^2_{\DG} + h^{2k+2})  \nonumber \\ & +   C \left( \frac{1}{\epsilon\mu} + 1\right)( h^{2}(\|\bm{e}_h^n\|^2  + \|\bm{e}_h^{n-1}\|^2)+(\|\bm{e}_h^{n-1}\|^2+h^2)\| \err \|_{\DG}^2 +\|\bm{U}_h^n\|^2_{\DG}).
 \label{eq:bd_A_1}
\end{align}
To handle $ T_2$, we write 
\begin{multline}
 T^n_2 = -  a_\mathcal{C}(\bm{u}^n; \bm{u}^n - \bm{u}^{n-1}, \bm{u}^n , \bm{U}_h^n ) -  a_\mathcal{C}(\bm{u}^n; \bm{u}^{n-1} - \Pi_h \bm{u}^{n-1}, \bm{u}^n , \bm{U}_h^n) \\  +  a_\mathcal{C}(\bm{u}^n; \bm{e}_h^{n-1}, \bm{u}^n, \bm{U}_h^n)   = T_{2,1}^n+  T_{2,2}^n + T_{2,3}^n. 
\end{multline}
The first term simply reads:
\begin{equation}
 T_{2,1}^n = -  \sum_{E \in \mesh_h} \int_E ((\bm{u}^n - \bm{u}^{n-1})\cdot \nabla \bm{u}^n) \cdot \bm{U}_h^n. 
\end{equation} 
By H\"{o}lder's inequality, \eqref{eq:discreter_poincare}, and a Taylor's expansion, we have: 
\begin{equation}
     |T_{2,1}^n| \leq 
 C \tau|\bm{u}^n|^2_{W^{1,3}(\Omega)} \int_{t^{n-1}}^{t^n}\|\partial_t \bm{u}\|^2  + C \| \bm{U}_h^n \|^2_{\DG}. \label{eq:bd_T_21}
\end{equation}
The term $ T_{2,2}^n$ is bounded by \eqref{eq:third_estimate_nonlinear_term}  in Lemma \ref{lemma:bounds_nonlinear_terms_2}, and the term $T_{2,3}^n$ is bounded by \eqref{eq:first_estimate_nonlinear_form} in Lemma \ref{prep:bounds_nonlinear_terms}. With the assumption that $\bm{u} \in L^{\infty}(0,T;H^{k+1}(\Omega)^d)$, we have  
\begin{multline}
 |T_{2,2}^n| + | T_{2,3}^n|  \leq C  (h^{k+1}|\bm{u}^{n-1}|_{H^{k+1}(\Omega)} + \|\bm{e}_h^{n-1} \|) \vertiii{\bm{u}^n}\|\bm{U}_h^n\|_{\DG} 
 \\ \leq \epsilon \mu \| \bm{e}_h^n\|^2  +C h^{2k+2}+ \epsilon  \| \bm{e}_h^{n-1} - \bm{e}_h^n\|^2 + C\left(\frac{1}{\epsilon \mu}+1\right)\|\bm{U}_h^n\|^2_{\DG}. \label{eq:bd_T_22}
\end{multline}

        Combining the above bounds, \eqref{eq:bd_A_2}, \eqref{eq:bd_A_3},\eqref{eq:bd_A_1},\eqref{eq:bd_T_21}, and \eqref{eq:bd_T_22}, yield the following bound on $ |\tilde{R}_{\mathcal{C}}(\bm{U}_h^n)|$. 
        \begin{align}
         & |\tilde{R}_{\mathcal{C}}(\bm{U}_h^n)| \leq  7\epsilon \mu\|\bm{e}_h^n\|^2 +2\epsilon\|\bm{e}_h^{n-1} - \bm{e}_h^n\|^2  + C\left(\frac{h^2}{\epsilon \mu} + 1 \right)(\tau^2 |\phi_h^n|^2_{\DG} + h^{2k+2})  \nonumber \\ & +   C \left( \frac{1}{\epsilon\mu} + 1\right)( h^{2}(\|\bm{e}_h^n\|^2  + \|\bm{e}_h^{n-1}\|^2)+(\|\bm{e}_h^{n-1}\|^2+h^2)\| \err \|_{\DG}^2 )\nonumber \\ & + C\tau \int_{t^{n-1}}^{t^n} \|\partial_t \bm{u}\|^2 + C\left( \frac{1}{\epsilon\mu} + 1\right) \|\bm{U}_h^n\|_{\DG}^2.  
          \label{eq:bound_R_C_improved}
        \end{align}
        We use the bounds above   in \eqref{eq:improved_estimate_semi_final}, use the coercivity property \eqref{eq:coercivity_a_ellip}, and choose $\epsilon = 1/24$. We sum the resulting equation,  from $n= 1$ to $n =m$, use the regularity assumptions, and obtain the following. 
        \begin{align}
            &\frac{1}{2} a_{\mathcal{D}}(\bm{U}_h^m, \bm{U}_h^m)  - \frac{1}{2}a_{\mathcal{D}}( \bm{U}_h^{0}, \bm{U}_h^{0}) + \frac14 \sum_{n= 1}^m  \|\bm{U}_h^n - \bm{U}_h^{n-1}\|_{\DG}^2   + \frac{\tau \mu}{2} \sum_{n=1}^m \| \bm{e}_h^n \|^2 \nonumber \\ & \leq C\left(1+ \frac{1}{\mu} + \mu\right) h^{2k+2} + C\tau^2  + C \tau^3  \sum_{n=1}^m \left(\frac{h^2}{\mu} +  1 + \mu \right)\vert \phi_h^n \vert_{\DG}^2 \nonumber \\&  +  \frac{\tau}{12} \sum_{n=1}^m \| \bm{e}_h^n - \bm{e}_h^{n-1} \|^2 + C \tau \left( \frac{1}{\mu} + 1 \right) \sum_{n=1}^m \|\bm{U}_h^n\|_{\DG}^2 \nonumber \\  & +C\left(\frac{1}{\mu}+1\right)\tau \sum_{n=1}^m ( h^{2}(\|\bm{e}_h^n\|^2  + \|\bm{e}_h^{n-1}\|^2)+(\|\bm{e}_h^{n-1}\|^2+h^2)\| \err \|_{\DG}^2 ). \label{eq:error_eq_semi_final_improved_estimate}
        \end{align}
     Note that by \eqref{eq:aux_1_dg}-\eqref{eq:aux_2_dg} and by definition of the $L^2$ projection, we have 
        \begin{equation} a_\mathcal{D}(\bm{U}_h^0, \bm{U}_h^0) = (\bm{\chi}^0, \bm{U}_h^0 ) = 0.  \label{eq:UH0_ZERO}
        \end{equation}
        Hence, $\bm{U}_h^0 = \bm{0}$ since  $a_\mathcal{D}$ is coercive. 
        With Lemma \ref{lemma:first_err_estimate}, coercivity of $a_\mathcal{D}$ \eqref{eq:coercivity_a_ellip}, we have
        \begin{align*}
        &\|\bm{U}_h^m\|^2_{\DG}+ \sum_{n= 1}^m  \|\bm{U}_h^n - \bm{U}_h^{n-1}\|_{\DG}^2 + 2 \tau \mu  \sum_{n=1}^m \| \bm{e}_h^n \|^2  \leq 
C\left(1+ \frac{1}{\mu} +\mu\right) h^{2k+2} + C\tau^2 \\ & + C_{\mu} (\tau+h^{2k})(\tau+h^2) \sum_{i=-4}^2 \mu^i
+C \tau \left( \frac{1}{\mu} + 1 \right) \sum_{n=1}^m \|\bm{U}_h^n\|_{\DG}^2. 
        \end{align*}
  Under  the assumption that $C(1/\mu + 1)\tau < 1$, we use Gronwall's inequality and obtain 
        \begin{equation}
        \tau \mu \sum_{n=1}^m \|\bm{e}_h^n\|^2 \leq C_{\mu} (\tau^2 + \tau h^2 + h^{2k+2})\sum_{i=-4}^2 \mu^i . 
        \end{equation}
        Since  $\tau h^2 \leq  (\tau^2 + h^4)/2$, the result is obtained if $k=1$. If $k \geq 2$, we use the assumption $h^{2} \leq \tau$. 
        To obtain a bound on $\err$, we use \eqref{eq:sec_err_eq}, \eqref{eq:lift_prop_g} and \eqref{eq:first_error_estimate}. 
        \begin{align}
            \tau \mu \sum_{n=1}^m  \| \err \|^2 &\leq 2 \tau \mu \sum_{n=1}^m \| \err-\bm{e}_h^n \|^2 + 2\tau  \mu \sum_{n=1}^m \|\bm{e}_h^n \|^2
  \leq C\mu  \tau^3 \sum_{n=1}^m | \phi_h^n |_{\DG}^2 \nonumber\\ 
           &  + 2\tau \mu \sum_{n=1}^m \|\bm{e}_h^n \|^2 \leq  C_{\mu} (\tau^2 + \tau h^2 + h^{2k+2})\sum_{i=-4}^2 \mu^i. 
        \end{align}
     For $k\geq 2$, we use the assumption $h^2 \leq \tau$. The result follows by the triangle inequality. 
        \end{proof}    

     \section{Stability and error estimates for the discrete time derivative of the velocity}\label{sec:time_derr_velocity}
In this section, we establish estimates for the discrete time derivative of the velocity. For a given function $\bm{W} \in \bm{X}$, define the discrete time derivative as 
\begin{equation}
    \delta_\tau \bm{W}^{n+1} = \frac{\bm{W}^{n+1} - \bm{W}^{n}}{\tau}, \quad n \geq 0. 
\end{equation}
The same notation is used for the   discrete time derivative of a scalar function in $\mathcal{C}([0,T],M)$. 
\begin{lemma}\label{lemma:stability_time_derivative} Assume $\partial_t \bm{u} \in L^2(0,T;H^2(\Omega)^d)$ and $ \partial_{t} p \in L^2(0,T; H^1(\Omega ))$. Fix $0 <  \gamma \leq 1$ and assume that there exist positive constants $c_1, c_2$ such that $\tau$ satisfies
    \begin{equation}
        c_1 h^2 \leq \tau \leq c_2 h^{(1+\gamma)d/3}.  \label{eq:cfl_cond_detla} 
    \end{equation}
Under the same assumptions of Theorem \ref{theorem:improved_estimate_velocity},  
we have the following stability bound.  For $ 1\leq m \leq N_T-1$, 
\[ 
    \|\delta_\tau \bm{e}_h^{m+1}\|^2 +\sum_{n=1}^{m} 
     \|\delta_\tau \bm{e}_h^{n+1} - \delta_\tau \bm{e}_h^n \|^2
+ \tau^2 \sum_{n=1}^m \vert \delta_\tau \phi_h^{n+1} \vert_{\DG}^2  +  \mu \tau \sum_{n=1}^{m}  \|\delta_\tau \errn  \|_{\DG}^2 \leq C_{\gamma,\mu}.  
\]
The constant $C_{\gamma,\mu}$ is independent of $h$ and $\tau$ but depends nonlinearly on $\gamma$ and $\mu$: $$C_{\gamma,\mu} =  C_\mu \tilde{K}_\mu + (C_\mu \tilde{K}_\mu)^{\beta + 2}\mu^{-\beta}c_2^{\beta},$$
where  $\beta$ is an integer such that $ \beta \geq 1/\gamma$, $C_\mu$ depends on $e^{\frac{1}{\mu}}$ and $\tilde{K}_\mu =  \sum_{j=-3}^2 \mu^j$.
\end{lemma}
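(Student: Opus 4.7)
The plan is to take the $\tau$-difference of the error equation \eqref{eq:first_err_eq} between the successive time levels $n+1$ and $n$, divide by $\tau$, and test with $\bm{\theta}_h = \delta_\tau \errn$. Combining this with the $\delta_\tau$-analogue of \eqref{eq:sec_err_eq}, the key identity
\[
2(\delta_\tau \bm{e}_h^{n+1} - \delta_\tau \bm{e}_h^n, \delta_\tau \errn) = \|\delta_\tau \bm{e}_h^{n+1}\|^2 - \|\delta_\tau \bm{e}_h^n\|^2 + \|\delta_\tau \bm{e}_h^{n+1} - \delta_\tau \bm{e}_h^n\|^2
\]
generates the telescoping $\|\delta_\tau\bm{e}_h^{n+1}\|^2$ on the left. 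The $\delta_\tau$-analogue of Lemma \ref{prep:weak_div_error} applied to the pressure forcing $\tau b(\delta_\tau\errn,\delta_\tau p_h^n - \delta_\tau p^{n+1})$, together with the coercivity \eqref{eq:coercivity_a_ellip} applied to the $\delta_\tau\phi_h^{n+1}$ contribution extracted from \eqref{eq:error_pressure_correction}--\eqref{eq:error_update_pressure}, yields the $\tau^2|\delta_\tau\phi_h^{n+1}|_{\DG}^2$ control, while the coercivity of $a_{\mathcal{D}}$ produces the $\mu\tau\|\delta_\tau\errn\|_{\DG}^2$ term.

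For the convection difference I would invoke the splitting identity \eqref{eq:splitting_technique} with $(\bm{u},\bm{w},\bm{v}_1,\bm{v}_2,\bm{\theta})=(\bm{u}_h^n,\bm{u}_h^{n-1},\errn,\err,\delta_\tau\errn)$. The leading piece $\tau a_{\mathcal{C}}(\bm{u}_h^{n-1};\bm{u}_h^{n-1},\delta_\tau\errn,\delta_\tau\errn)$ is nonnegative by the positivity \eqref{eq:cpositivity}. The remaining $\mathcal{C}$ and $\mathcal{U}$ remainders involve $\tau\delta_\tau\bm{u}_h^n$ paired with $\errn$, and I would bound them using \eqref{eq:first_estimate_nonlinear_form} together with the inverse estimates \eqref{eq:inverse_estimate}--\eqref{eq:inverse_estimate2}, exchanging the missing $L^\infty$ or $W^{1,3}$ control for negative powers of $h$ and a factor of $\|\delta_\tau\bm{u}_h^n\|$. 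The truncation term $(R_{\mathcal{C}}^{n+1}-R_{\mathcal{C}}^n)/\tau$ is handled by writing $\bm{u}^{n+1}-\bm{u}^n = \int_{t^n}^{t^{n+1}}\partial_t\bm{u}$ and invoking $\partial_t\bm{u}\in L^2(0,T;H^2(\Omega)^d)$.

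The main obstacle, and the reason for the form $C_{\gamma,\mu}=C_\mu\tilde K_\mu + (C_\mu\tilde K_\mu)^{\beta+2}\mu^{-\beta}c_2^\beta$, is the absence of an a priori uniform-in-$n$ bound on $\|\delta_\tau\bm{u}_h^n\|_{L^\infty}$ or even $\|\delta_\tau\bm{u}_h^n\|$: the inverse estimates produce negative powers of $h$ that cannot be directly absorbed on the left. My plan is a bootstrap induction: assuming the stated inequality holds up through step $m-1$ with a tentative constant $C^{(j)}$, I would insert this provisional bound into the nonlinear remainders, and use the upper CFL restriction \eqref{eq:cfl_cond_detla} to trade each $h^{-\alpha}$ factor for a positive power of $h$ (typically $\tau h^{-d/3}\le c_2 h^{\gamma d/3}$). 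Applying discrete Gronwall returns an updated constant $C^{(j+1)}$ with a smaller exponent on the unknown; after $\beta\ge 1/\gamma$ such iterations the accumulated positive powers of $h$ overtake the blow-up and the bound closes in the stated form. The lower bound $\tau\ge c_1 h^2$ is invoked when applying \eqref{eq:inverse_estimate_dG} to convert $\DG$-norm terms into $L^2$ quantities without losing too much in $h$.

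To conclude, summing from $n=1$ to $m$ I would invoke Lemma \ref{lemma:first_err_estimate} and Theorem \ref{theorem:improved_estimate_velocity} to bound the accumulated forcing quantities $\mu\tau\sum\|\err\|_{\DG}^2$, $\tau^2\sum|\phi_h^n|_{\DG}^2$, and $\mu\tau\sum\|\bm{e}_h^n\|^2$ that appear on the right-hand side. The starting datum $\|\delta_\tau\bm{e}_h^1\|$ is controlled by testing \eqref{eq:intermidiate_velocity} at $n=1$ with an appropriate choice of discrete test function and exploiting $\bm{\tilde{e}}_h^0=\bm{0}$, $p_h^0=\phi_h^0=0$, together with the regularity of $\partial_t\bm{u}$ at $t=0$ implicit in the assumptions.
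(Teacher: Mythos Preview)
Your plan is essentially the paper's own argument: test the $\tau$-differenced error equation with $\delta_\tau\errn$, exploit the positivity \eqref{eq:cpositivity} on the leading nonlinear piece, and close the residual nonlinear term $\frac{C}{\mu}\tau\sum h^{-d/3}\|\delta_\tau\bm{e}_h^n\|^2\|\errn\|_{\DG}^2$ by an iterative bootstrap on the bound $\|\delta_\tau\bm{e}_h^{m+1}\|^2$ using $\tau h^{-d/3}\le c_2 h^{\gamma d/3}$, iterating $\beta\ge 1/\gamma$ times until the exponent becomes nonnegative. Two technical points you gloss over but which the paper works out in detail: (i) the pressure forcing $\tau b(\delta_\tau\errn,\delta_\tau p_h^n)$ is handled via auxiliary functions $\hat S_h^n=\delta\mu\sum_{i\le n}(\nabla_h\cdot\delta_\tau\tilde{\bm{e}}_h^i-R_h[\delta_\tau\tilde{\bm{e}}_h^i])$ and $\hat\xi_h^n=\delta_\tau p_h^n+\hat S_h^n$ so that $\hat\xi_h^{n+1}-\hat\xi_h^n=\delta_\tau\phi_h^{n+1}$, which is what produces a clean telescoping $a_{\mathrm{ellip}}$ term rather than just an application of Lemma~\ref{prep:weak_div_error}; (ii) the bootstrap is not an induction on $m$ with a Gronwall at each step, but a direct iteration of the uniform-in-$m$ inequality \eqref{eq:error_eq_semi_final_dt}, seeded by the crude bound $\|\delta_\tau\bm{e}_h^n\|^2\le C_\mu\tau^{-1}$ coming from Lemma~\ref{lemma:first_err_estimate}.
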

\begin{proof}
 Subtracting \eqref{eq:first_err_eq} at time $ t^n$ from \eqref{eq:first_err_eq} at time $t^{n+1}$, we obtain for all $\bm{\theta}_h \in \bm{X}_h$, 
\begin{align}
(\delta_\tau \bm{\tilde{e}}_h^{n+1}& - \delta_\tau \bm{e}_h^n , \bm{\theta}_h) + \tau \mu a_{\mathcal{D}}(\delta_\tau \bm{\tilde{e}}_h^{n+1}, \bm{\theta}_h )= \tau b(\bm{\theta}_h,\delta_\tau p_h^n - \delta_\tau p^{n+1})\nonumber  \\ &\quad  - \tau \mu a_{ \mathcal{D}}(\delta_\tau \Pi_h \bm{u}^{n+1} - \delta_\tau \bm{u}^{n+1}, \bm{\theta}_h)  + \mathcal{N}_1(\bm{\theta}_h)  + \mathcal{N}_2(\bm{\theta}_h) + \hat{R}_t(\bm{\theta}_h). \label{eq:first_err_eq_dteh} 
\end{align}
Here $\mathcal{N}_1(\bm{\theta}_h), \mathcal{N}_2(\bm{\theta}_h)$ and $\hat{R}_t(\bm{\theta}_h)$ are defined as follows. For $\bm{\theta}_h \in \bm{X}_h$ 
\begin{align*}
    \mathcal{N}_1(\bm{\theta}_h) &=a_\mathcal{C}(\bm{u}^{n+1}; \bm{u}^{n+1}, \bm{u}^{n+1},\bm{\theta}_h)  -  a_\mathcal{C}(\bm{u}_h^n;  \bm{u}_h^n, \bm{v}_h^{n+1},\bm{\theta}_h ), \\ 
    \mathcal{N}_2(\bm{\theta}_h) & = a_\mathcal{C}(\bm{u}_h^{n-1}; \bm{u}_h^{n-1}, \bm{v}_h^{n},\bm{\theta}_h ) -  a_\mathcal{C}(\bm{u}^{n}; \bm{u}^{n}, \bm{u}^{n},\bm{\theta}_h), \nonumber \\   
\hat{R}_t(\bm{\theta}_h) & =  ((\partial_t \bm{u})^{n+1} - (\partial_t \bm{u})^n, \bm{\theta}_h ) - \frac{1}{\tau}(\Pi_h \bm{u}^{n+1} -  2\Pi_h\bm{u}^n + \Pi_h \bm{u}^{n-1}, \bm{\theta}_h ).
 \end{align*}
 Choosing $\bm{\theta}_h = \delta_\tau \bm{\tilde{e}}_h^{n+1}$ in \eqref{eq:first_err_eq_dteh} and using the coercivity property of $a_\mathcal{D}$ \eqref{eq:coercivity_a_ellip} yields 
\begin{multline}
\frac{1}{2} ( \|\delta_\tau \bm{\tilde{e}}_h^{n+1}\|^2 -\| \delta_\tau \bm{e}_h^n \|^2 + \|\delta_\tau \bm{\tilde{e}}_h^{n+1} - \delta_\tau \bm{e}_h^n \|^2 ) + \frac12 \tau \mu  \|\delta_\tau \bm{\tilde{e}}_h^{n+1} \|_{\DG}^2 \\  \leq \tau b(\delta_\tau \bm{\tilde{e}}_h^{n+1}, \delta_\tau p_h^n - \delta_\tau p^{n+1})    -\tau \mu a_{\mathcal{D}}(\delta_\tau \Pi_h \bm{u}^{n+1} - \delta_\tau \bm{u}^{n+1}, \delta_\tau \errn ) \\  + \mathcal{N}_1(\delta_\tau \errn) + \mathcal{N}_2(\delta_\tau \errn) +  \hat{R}_t(\delta_\tau \errn). \label{eq:error_eq_dt}
\end{multline} 
From \eqref{eq:sec_err_eq}, we have for $n \geq 1$: 
\begin{equation}
    (\delta_\tau \bm{e}_h^{n+1} - \delta_\tau \bm{\tilde{e}}^{n+1}_h, \bm{\theta}_h) = \tau b(\bm{\theta}_h , \delta_\tau \phi_h^{n+1}) 
    , \quad \forall \bm{\theta}_h \in \bm{X}_h. 
    \label{eq:update_velocity_delta}
\end{equation}
In addition, from \eqref{eq:div_error}, and \eqref{eq:div_error_2}, we obtain $\forall q_h \in M_h$ and $n \geq 1$ 
\begin{align}
    b(\delta_\tau \bm{e}_h^{n+1}, q_h) &= b(\delta_\tau \errn, q_h) + \tau\aelip(\delta_\tau \phi_h^{n+1},q_h) -\tau \sum_{e \in \Gamma_h} \frac{\tilde{\sigma}}{h_e} \int_e [\delta_\tau \phi_h^{n+1}][q_h] \nonumber \\ &  \quad + \tau(\bm{G}_h([\delta_\tau \phi_h^{n+1}]), \bm{G}_h([q_h])) \nonumber \\ &
     = -\tau \sum_{e \in \Gamma_h} \frac{\tilde{\sigma}}{h_e} \int_e [\delta_\tau \phi_h^{n+1}][q_h] + \tau(\bm{G}_h([\delta_\tau \phi_h^{n+1}]), \bm{G}_h([q_h])). \label{eq:property_err_delta}
\end{align} 
From the above properties and \eqref{eq:update_velocity_delta}, observe that 
\begin{multline}
\|\delta_\tau \bm{e}_h^{n+1} - \delta_\tau \errn\|^2 = 
\tau b(\delta_\tau \bm{e}_h^{n+1} - \delta_\tau \errn,  \delta_\tau \phi_h^{n+1} ) =  \tau^2\aelip(\delta_\tau \phi_h^{n+1}, \delta_\tau \phi_h^{n+1}) \\  -\tau^2 \sum_{e\in\Gamma_h} \frac{\tilde{\sigma}}{h_e} \| [\delta_\tau \phi_h^{n+1}] \|^2_{L^2(e)} + \tau^2  \|\bm{G}_h([\delta_\tau \phi_h^{n+1}])\|^2. 
\end{multline}
Hence,  by letting $\bm{\theta}_h = \delta_\tau \bm{e}_h^{n+1}$ in \eqref{eq:update_velocity_delta} and using \eqref{eq:property_err_delta}, we obtain 
\begin{multline}\label{eq:error_eq_2_dt}
\frac{1}{2}\left( \| \delta_\tau \bm{e}_h^{n+1}\|^2 - \|\delta_\tau \errn\|^2 \right) + \frac{\tau^2}{2}\aelip(\delta_\tau \phi_h^{n+1}, \delta_\tau \phi_h^{n+1}) \\  + \frac{\tau^2}{2} \sum_{e\in\Gamma_h} \frac{\tilde{\sigma}}{h_e} \| [\delta_\tau \phi_h^{n+1}] \|^2_{L^2(e)} = \frac{\tau^2}{2} \|\bm{G}_h([\delta_\tau \phi_h^{n+1}])\|^2.
\end{multline} 
In addition, note that from \eqref{eq:update_velocity_delta} and \eqref{eq:def_b_lift_2}, we have   
\[ 
\delta_\tau \bm{e}_h^{n+1} - \delta_\tau \tilde{\bm{e}}_h^{n+1} = - \tau \nabla_h \delta_\tau
\phi_h^{n+1} + \tau \bm{G}_h[\delta_\tau \phi_h^{n+1}].
\]
This implies that
\[
\Vert \delta_\tau \tilde{\bm{e}}_h^{n+1} - \delta_\tau \bm{e}_h^{n}\Vert^2 =
\Vert  \tau \nabla_h \delta_\tau
\phi_h^{n+1} - \tau \bm{G}_h[\delta_\tau \phi_h^{n+1}]
+ \delta_\tau \bm{e}_h^{n+1} - \delta_\tau \bm{e}_h^n \Vert^2.
\]
Expanding the norm and using \eqref{eq:def_b_lift_2} with $\bm{\theta}_h = \delta_\tau \bm{e}_h^{n+1} - \delta_\tau \bm{e}_h^n$ and $q_h = \delta_\tau \phi_h^{n+1}$ yields
\begin{align}
\|& \delta_\tau \errn -  \delta_\tau \bm{e}_h^n \|^2 =  \|\delta_\tau \bm{e}_h^{n+1} - \delta_\tau \bm{e}_h^n \|^2 + \tau^2 \| \nabla_h \delta_\tau \phi_h^{n+1} \|^2 + \tau^2 \|\bm{G}_h ([\delta_\tau \phi_h^{n+1}]) \|^2 \nonumber \\ & -2\tau b(\delta_\tau \bm{e}_h^{n+1}  -  \delta_\tau \bm{e}_h^n, \delta_\tau \phi_h^{n+1})  -2\tau^2(\nabla_h \delta_\tau \phi_h^{n+1}, \bm{G}_h([\delta_\tau \phi_h^{n+1}])).
\label{eq:diff_tilde}
\end{align}
With \eqref{eq:property_err_delta}  and \eqref{eq:div_error_2}  ,  we find (see Lemma~\ref{lemma:appendix_delta_expression} in Appendix)
\begin{multline}
-2 b(\delta_\tau \bm{e}_h^{n+1}  -  \delta_\tau \bm{e}_h^n, \delta_\tau \phi_h^{n+1})  =\tau( \tilde{A}^{n}_1 - \tilde{A}^{n}_2) + \tau \sum_{e\in\Gamma_h } \frac{\tilde{\sigma}}{h_e} \|[\delta_\tau \phi_h^{n+1} - \delta_\tau \phi_h^n]\|^2_{ L^2(e)} \\  - \tau\|\bm{G}_h([\delta_\tau \phi_h^{n+1} - \delta_\tau \phi_h^n ]) \|^2 - \frac{2}{\tau} \delta_{n,1} b(\bm{e}_h^0, \delta_\tau \phi_h^2), \label{eq:bform_delta_expression} 
\end{multline}
where for $n \geq 1$
\begin{align*}
\tilde{A}^n_1 & = \sum_{e \in \Gamma_h} \frac{\tilde{\sigma}}{h_e}\left( \|[\delta_\tau \phi_h^{n+1}] \|^2_{L^2(e)} - \|[\delta_\tau \phi_h^n]\|^2_{L^2(e)}\right) , \\   \tilde{A}^n_2  &=\| \bm{G}_h([\delta_\tau \phi_h^{n+1}]) \|^2 - \| \bm{G}_h([\delta_\tau \phi_h^{n}]) \|^2. 
\end{align*}
Using \eqref{eq:lift_prop_g} and assuming $\tilde{\sigma} \geq 4\tilde{M}_k^2$, we have 
\begin{align*}
 \sum_{e\in\Gamma_h } \frac{\tilde{\sigma}}{h_e} \|[\delta_\tau \phi_h^{n+1} - \delta_\tau \phi_h^n]\|^2_{L^2(e)} - \|\bm{G}_h&([\delta_\tau \phi_h^{n+1} - \delta_\tau \phi_h^n ]) \|^2 \\ & \geq \frac{1}{2} \sum_{e\in\Gamma_h } \frac{\tilde{\sigma}}{h_e} \|[\delta_\tau \phi_h^{n+1} - \delta_\tau \phi_h^n]\|_{L^2(e)}^2 , \\ 
 |(\nabla_h \delta_\tau \phi_h^{n+1}, \bm{G}_h([\delta_\tau \phi_h^{n+1}]))|  
  &  \leq \frac{1}{4} \| \nabla_h \delta_\tau \phi_h^{n+1}\|^2 + \frac{1}{4} \sum_{e\in \Gamma_h} \frac{\tilde{\sigma}}{h_e} \| [\delta_\tau \phi_h^{n+1}] \|_{L^2(e)}^2 .
\end{align*}
With the above expressions and \eqref{eq:error_eq_2_dt},  \eqref{eq:error_eq_dt} becomes: 
\begin{align}
& \frac{1}{2} ( \|\delta_\tau \bm{e}_h^{n+1}\|^2 - \| \delta_\tau \bm{e}_h^n \|^2 + \|\delta_\tau \bm{e}_h^{n+1} - \delta_\tau \bm{e}_h^n \|^2 ) + \frac12  \tau \mu \|\delta_\tau \bm{\tilde{e}}_h^{n+1} \|_{\DG}^2  +\frac{\tau^2}{4} \vert \delta_\tau \phi_h^{n+1} \vert_{\DG}^2 \nonumber\\ & \quad + \frac{\tau^2}{2} \aelip(\delta_\tau \phi_h^{n+1}, \delta_\tau \phi_h^{n+1}) + \frac{\tau^2}{4} \sum_{e\in\Gamma_h } \frac{\tilde{\sigma}}{h} \|[\delta_\tau \phi_h^{n+1} - \delta_\tau \phi_h^n]\|_{L^2(e)}^2 + \frac{\tau^2}{2}(\tilde{A}^n_1 -\tilde{A}^n_2 ) \nonumber \\ &  \leq \tau b(\delta_\tau \bm{\tilde{e}}_h^{n+1}, \delta_\tau p_h^n - \delta_\tau p^{n+1})   -\tau \mu a_{\mathcal{D}}(\delta_\tau \Pi_h \bm{u}^{n+1} - \delta_\tau \bm{u}^{n+1},  \delta_\tau \bm{\tilde{e}}_h^{n+1}) \nonumber \\ &  \quad + \mathcal{N}_1(\delta_\tau \errn) + \mathcal{N}_2(\delta_\tau \errn) +  \hat{R}_t(\delta_\tau \bm{\tilde{e}}_h^{n+1}) +  \delta_{n,1} b(\bm{e}_h^0, \delta_\tau \phi_h^2). \label{eq:error_eq_3_dt}
\end{align}
We begin by handling the first two terms on the right hand side of \eqref{eq:error_eq_3_dt}.  We write: 
\begin{multline*}
     b(\delta_\tau \errn, \delta_\tau p_h^n - \delta_\tau p^{n+1}) =  b(\delta_\tau \errn, \delta_\tau p_h^n ) -   b(\delta_\tau \errn , \pi_h (\delta_\tau p^{n+1})) \\  +  b(\delta_\tau \errn, \pi_h( \delta_\tau p^{n+1}) - \delta_\tau p^{n+1}). 
\end{multline*}
Since $\delta_\tau p^{n+1}$ has a zero average and $\pi_h$ preserves cell averages,
$\pi_h (\delta_\tau p^{n+1})$ belongs to $M_{h0}$. Hence, by \eqref{eq:error_pressure_correction} and the stability of the $L^2$ projection \eqref{eq:bd_phN}, we obtain  
\begin{multline}
    | b(\delta_\tau \errn , \pi_h(\delta_\tau p^{n+1}))| =  \tau| \aelip(\delta_\tau \phi_h^{n+1}, \pi_h (\delta_\tau p^{n+1}))| \\ \leq C \tau \vert\delta_\tau \phi_h^{n+1}\vert_{\DG} \vert \delta_\tau p^{n+1}\vert_{H^1(\Omega)}.  \nonumber 
\end{multline}
Using Young's inequality and Taylor's theorem, we obtain
\begin{align*}
| b(\delta_\tau \errn , \pi_h(\delta_\tau p^{n+1}))|
 &\leq \frac{\tau}{16} |\delta_\tau  \phi_h^{n+1} |_{\DG}^2 + C \int_{t^{n}}^{t^{n+1}} \vert \partial_t p  \vert_{H^1(\Omega)}^2.  
\end{align*}
By \eqref{eq:form_b}, the definition of $ \pi_h(\delta_\tau p^{n+1})$, a trace inequality, and \eqref{eq:l2_proj_approximation}, we have:
\begin{align*}
 &|b(\delta_\tau \errn, \pi_h(\delta_\tau p^{n+1}) - \delta_\tau p^{n+1})| \\ & =  \left\vert\sum_{e\in\Gamma_h \cup \partial \Omega}  \int_e  \{  \pi_h(\delta_\tau p^{n+1})- \delta_\tau p^{n+1}\} [ \delta_\tau \errn] \cdot \bm{n}_e \right\vert \\ 
    & \leq C h\vert \delta_{\tau} p^{n+1}\vert_{H^1(\Omega)} \| \delta_\tau \errn \|_{\DG} \leq \epsilon  \mu \| \delta_\tau \errn\|_{\DG}^2 + \frac{C}{\epsilon
     \mu} h^2 \tau^{-1} \int_{t^{n}}^{t^{n+1}} \vert \partial_t p  \vert^2_{H^1(\Omega)}.
\end{align*}
To handle $b(\delta_\tau \errn, \delta_\tau p_h^n)$, we introduce the auxiliary functions: 
\begin{align*}
    \hat{S}_h^n = \delta \mu \sum_{i=1}^n (\nabla_h \cdot \delta_\tau \bm{\tilde{e}}_h^{i} - R_h ([\delta_\tau \bm{\tilde{e}}_h^i]) ), 
    \quad \hat{\xi}_h^{n}  =  \delta_\tau p_h^n + \hat{S}_h^n,  \quad  n \geq 1. 
\end{align*}
 From \eqref{eq:error_update_pressure}  and the above definitions, it is implied 
that $\hat{\xi}_h^{n+1} - \hat{\xi}_h^{n}
 = \delta_\tau \phi_h^{n+1}$. With this expression, we write: 
 \begin{equation*}
      b(\delta_\tau \errn, \delta_\tau p_h^n) =  b(\delta_\tau \errn, \hat{\xi}_h^n) - b(\delta_\tau \errn,\hat{S}_h^n).
 \end{equation*}
 Observe that $\hat{\xi}_h^n \in M_{h0}$ since $\hat{S}_h^{n} \in M_{h0}$ for $n \geq 1$. Hence, we use \eqref{eq:error_pressure_correction} to deduce that 
 \begin{align*}
      b(\delta_\tau \errn, &\hat{\xi}_h^n) = - \tau\aelip(\delta_\tau \phi_h^{n+1}, \hat{\xi}_h^n) = - \tau \aelip(\hat{\xi}_h^{n+1}- \hat{\xi}_h^{n}, \hat{\xi}_h^n )\\ 
     & = -\frac{\tau}{2}\left( \aelip(\hat{\xi}_h^{n+1} , \hat{\xi}_h^{n+1}) - \aelip(\hat{\xi}_h^{n} , \hat{\xi}_h^{n}) - \aelip(\delta_\tau \phi_h^{n+1}, \delta_\tau \phi_h^{n+1}) \right). 
 \end{align*}
 With \eqref{eq:def_b_lift}, we have 
 \begin{align*}
     b(\delta_\tau \errn, \hat{S}_h^n) & =  \frac{1}{\delta \mu} (\hat{S}_h^{n+1} - \hat{S}_h^n , \hat{S}_h^n )= \frac{1}{2\delta \mu} \left(\| \hat{S}_h^{n+1} \|^2 - \| \hat{S}_h^n \|^2 - \|\hat{S}_h^{n+1} - \hat{S}_h^n \|^2 \right). 
 \end{align*}
 With the assumption that $\delta \leq 1/(4d)$ and $\sigma \geq  M_{k-1}^2/d$, we use \eqref{eq:lift_prop_r} and the bound $\|\nabla_h \cdot \bm{\theta}\| \leq d^{1/2} \|\nabla_h \bm{\theta} \|$ for $\bm
 {\theta} \in \bm{X}$. We obtain 
 \begin{equation}
    \frac{1}{2\delta \mu} \| \hat{S}_h^{n+1} - \hat{S}_h^n \|^2 \leq \frac{ \mu}{4} \|\nabla_h \delta_\tau \errn \|^2 + \frac{\mu}{4} \sum_{e\in\Gamma_h \cup \partial \Omega} \sigma h_e^{-1} \| [\delta_\tau \errn] \|^2_{L^2(e)}. 
 \end{equation}
 Following a similar technique as the one used in \cite{girault2005splitting} and in \cite{inspaper1}, we also have 
 \begin{align*}
&|a_\mathcal{D}(\delta_\tau \Pi_h \bm{u}^{n+1} - \delta_\tau \bm{u}^{n+1} , \delta_\tau \errn)|  =  |a_\mathcal{D}(\Pi_h \delta_\tau  \bm{u}^{n+1} - \delta_\tau \bm{u}^{n+1} , \delta_\tau \errn)|  \\ 
&\leq C  h \vert \delta_\tau \bm{u}^{n+1}\vert_{H^2(\Omega)} \|\delta_\tau \errn \|_{\DG} \leq \epsilon \|\delta_\tau \errn \|_{\DG}^2  + \frac{C}{\epsilon}  h^2 \tau^{-1} \int_{t^{n}}^{t^{n+1}} |\partial_t \bm{u}|^2_{H^2(\Omega)}.
 \end{align*}
 With the above expressions and bounds,  \eqref{eq:error_eq_3_dt} reads: 
\begin{align} \label{eq:error_eq_4_dt}
& \frac{1}{2} ( \|\delta_\tau \bm{e}_h^{n+1}\|^2 - \| \delta_\tau \bm{e}_h^n \|^2 + \|\delta_\tau \bm{e}_h^{n+1} - \delta_\tau \bm{e}_h^n \|^2 ) + \frac{ \tau \mu}{4} \|\delta_\tau \bm{\tilde{e}}_h^{n+1} \|_{\DG}^2 \nonumber\\ &   \quad + \frac{\tau^2}{2} \left( \aelip(\hat{\xi}_h^{n+1}, \hat{\xi}_h^{n+1}) - \aelip(\hat{\xi}_h^{n}, \hat{\xi}_h^{n})\right)   +\frac{3\tau^2}{16} \vert \delta_\tau \phi_h^{n+1} \vert_{\DG}^2 \nonumber\\ &  \quad + \frac{\tau^2}{4} \sum_{e\in\Gamma_h } \frac{\tilde{\sigma}}{h} \|[\delta_\tau \phi_h^{n+1} - \delta_\tau \phi_h^n]\|_{L^2(e)}^2 + \frac{\tau^2}{2}(\tilde{A}^n_1 -\tilde{A}^n_2 ) + \frac{\tau}{2\delta \mu}\left(\|\hat{S}_h^{n+1}\|^2 - \|\hat{S}_h^n\|^2\right) \nonumber \\ &  \leq 2\epsilon \tau \mu \| \delta_\tau \errn \|_{\DG}^2 + C \frac{\mu }{\epsilon} h^2 \int_{t^n}^{t^{n+1}}| \partial_t \bm{u} |^2_{H^2(\Omega)} + C\left(\tau + \frac{h^2}{\epsilon \mu}\right) \int_{t^n}^{t^{n+1}} |\partial_t p|^2_{H^1(\Omega)}\nonumber \\ &  \quad + \mathcal{N}_1(\delta_\tau \errn) + \mathcal{N}_2(\delta_\tau \errn) +  \hat{R}_t(\delta_\tau \bm{\tilde{e}}_h^{n+1}) + \delta_{n,1} b(\bm{e}_h^0, \delta_\tau \phi_h^2 ). 
\end{align}
We proceed by noting the following splitting on the nonlinear terms. 
\begin{align}
    \mathcal{N}_1(\bm{\theta}_h) & = a_{\mathcal{C}}(\bm{u}^{n+1}; \bm{u}^{n+1} - \bm{u}^n , \bm{u}^{n+1}, \bm{\theta}_h)  + a_\mathcal{C}(\bm{u}_h^{n}; \bm{u}^n - \bm{u}_h^{n}, \bm{u}^{n+1}, \bm{\theta}_h) \nonumber \\ & \quad \quad + a_\mathcal{C}(\bm{u}_h^n; \bm{u}_h^n, \bm{u}^{n+1} -  \bm
    {v}_h^{n+1}, \bm{\theta}_h)
     = \xi^n_1(\bm{\theta}_h) + \xi^n_2(\bm{\theta}_h) + \xi^n_3(\bm{\theta}_h),    \label{eq:expanding_N1}\\
    \mathcal{N}_2(\bm{\theta}_h) & = -  a_{\mathcal{C}}(\bm{u}^{n}; \bm{u}^{n} - \bm{u}^{n-1} , \bm{u}^{n}, \bm{\theta}_h)   - a_\mathcal{C}(\bm{u}_h^{n-1}; \bm{u}^{n-1} - \bm{u}_h^{n-1}, \bm{u}^{n}, \bm{\theta}_h)  \nonumber \\ & \quad \quad - a_\mathcal{C}(\bm{u}_h^{n-1}; \bm{u}_h^{n-1}, \bm{u}^{n} -  \bm
    {v}_h^{n}, \bm{\theta}_h )
     = \vartheta^n_1(\bm{\theta}_h) + \vartheta^n_2(\bm{\theta}_h) + \vartheta^n_3(\bm{\theta}_h).  \label{eq:expanding_N2}   
\end{align}
To further simplify the writeup, we write: 
\[
\mathcal{N}_1(\delta_\tau \errn) + \mathcal{N}_2(\delta_\tau \errn) 
= \sum_{i = 1}^3 (\xi_i^n(\delta_\tau \errn) + \vartheta_i^n(\delta_\tau \errn)) = \sum_{i=1}^3 Q_i^n. 
\]
The term $Q^n_1$ can be handled as follows. 
\begin{align}
Q^n_1 = \sum_{E \in \mesh_h}  \int_{E} ((\bm{u}^{n+1} - \bm{u}^n )\cdot \nabla \bm{u}^{n+1}  - (\bm{u}^{n} - \bm{u}^{n-1} ) )\cdot \nabla \bm{u}^{n}) \cdot \delta_{\tau} \errn . \nonumber
\end{align} 
With H\"{o}lder's inequality, \eqref{eq:discreter_poincare}, and the assumption that $\bm{u} \in L^{\infty}(0,T;H^{k+1}(\Omega)^{d}$: 
\begin{align*}
|Q^n_1| \leq & C (\|\bm{u}^{n+1} - \bm{u}^n \| |\bm{u}^{n+1}  |_{W^{1,3}(\Omega)} 
 +  \|\bm{u}^{n} - \bm{u}^{n-1}\|   |\bm{u}^{n}  |_{W^{1,3}(\Omega)}) \|\delta_\tau \errn \|_{\DG}  \\
 \leq  &  \epsilon \tau \mu \|\delta_\tau \errn \|_{\DG}^2 + \frac{C}{\epsilon \mu} \int_{t^{n-1}}^{t^{n+1}}\| \partial_t \bm{u}\|^2. 
\end{align*}
Since the upwind terms in $Q^n_2$ vanish, we write 
\begin{align}
Q^n_2 
& = \tau \mathcal{C}(\bm{u}^n - \Pi_h \bm{u}^n, \delta_\tau \bm{u}^{n+1}, \delta_\tau \errn ) - \tau \mathcal{C}(\bm{e}_h^n, \delta_\tau \bm{u}^{n+1}, \delta_\tau \errn ) \nonumber \\
& \quad + \tau \mathcal{C}(\delta_{\tau}(\bm{u}^n - \Pi_h \bm{u}^n),  \bm{u}^{n}, \delta_\tau \errn ) - \tau \mathcal{C}(\delta_\tau \bm{e}_h^n,  \bm{u}^{n}, \delta_\tau \errn). 
\end{align}
The first and third terms are bounded by \eqref{eq:third_estimate_nonlinear_term}. The second  and fourth terms are  bounded by \eqref{eq:first_estimate_nonlinear_form}.  We obtain:  
\begin{multline*}
 |Q^n_2  |   \leq  C\tau( h^{k+1} |\bm{u}^n|_{H^{k+1}(\Omega)} + \|\bm{e}
_h^n \|)\vertiii{\delta_\tau \bm{u}^{n+1}}\|\delta_\tau \errn \|_{\DG} \nonumber \\ 
 + C\tau \| \delta_\tau \bm{e}_h^n\|  \vertiii{\bm{u}^n}\|\delta_\tau \errn \|_{\DG} 
 +  C h^2 \sqrt{\tau} \left(\int_{t^{n-1}}^{t^n} \vert \partial_t \bm{u}\vert_{H^2(\Omega)} \right)^{1/2}   \vertiii{\bm{u}^n}\|\delta_\tau \errn \|_{\DG}. \nonumber
\end{multline*}
With the assumption that $\partial_t \bm{u} \in L^2(0,T; H^2(\Omega)^d)$, a Sobolev embedding result and a Taylor expansion, we have 
\begin{align}
\vertiii{\delta_\tau \bm{u}^{n}}^2 \leq C \tau^{-2} \Vert \bm{u}^{n} - \bm{u}^{n-1} \Vert_{H^2(\Omega)}^2 \leq C \tau^{-1} \int_{t^{n-1}}^{t^{n}} \Vert \partial_t \bm{u} \Vert_{H^2(\Omega)}^2, \quad n \geq 1. \label{eq:controlling_triplenorm_delta}
\end{align}
With the assumption that  $\bm{u} \in L^\infty(0,T;H^{k+1}(\Omega)^d)$, \eqref{eq:controlling_triplenorm_delta}, \eqref{eq:cfl_cond_detla}, and Young's inequality, we obtain 
\begin{align}
|Q^n_2|\leq & \epsilon \tau \mu \|\delta_\tau \errn\|^2_{\DG}+\frac{C}{\epsilon \mu} (\tau^2 + h^{2k+2} +\|\bm{e}_h^n\|^2 ) \int_{t^{n-1}}^{t^{n+1}}\|\partial_t \bm{u}\|_{H^2(\Omega)}^2  
\nonumber \\ &+ \frac{C}{\epsilon \mu} \tau^{-1} \|\bm{e}_h^n - \bm{e}_h^{n-1}\|^2.  \label{eq:bd_Q2_W_2}
\end{align}
We write $Q^n_3$ as follows. 
\begin{align}
& Q^n_3   = a_\mathcal{C}(\bm{u}_h^n; \bm{u}_h^n, \bm{u}^{n+1} -  \Pi_h\bm{u}^{n+1}, \delta_\tau \errn) -a_\mathcal{C}(\bm{u}_h^n; \bm{u}_h^n, \errn , \delta_\tau \errn) \nonumber \\ 
&  -a_\mathcal{C}(\bm{u}_h^{n-1}; \bm{u}_h^{n-1}, \bm{u}^{n} -  \Pi_h \bm
{u}^{n}, \delta_\tau \errn) + a_\mathcal{C}(\bm{u}_h^{n-1}; \bm{u}_h^{n-1}, \err , \delta_\tau \errn) 
= \sum_{i=1}^4 \eta_i^n. 
\end{align}
 To handle $\eta^n_i$, we use the forms $\mathcal{U}$ and $\mathcal{C}$ given in \eqref{eq:split_ac1}-\eqref{eq:split_ac2}.  
Using \eqref{eq:splitting_technique}, we have: 
\begin{multline}
\eta^n_1 + \eta^n_3  = \tau a_{\mathcal{C}}(\bm{u}_h^{n-1}; \bm{u}_h^{n-1}, \delta_\tau \bm{u}^{n+1} - \delta_\tau \Pi_h \bm{u}^{n+1}, \delta_\tau \errn )    + U^n_1 \\ + \tau \mathcal{C}(\delta_\tau \bm{u}_h^{n}, \bm{u}^{n+1}- \Pi_h \bm{u}^{n+1}, \delta_\tau \errn )    - \tau \mathcal{U}(\bm{u}_h^{n-1}; \delta_\tau \bm{u}_h^n, \bm{u}^{n+1}- \Pi_h \bm{u}^{n+1}, \delta_\tau \errn )  , \nonumber
\end{multline}
where 
\begin{align*} U^n_1 &= 
 \mathcal{U}(\bm{u}_h^{n-1};\bm{u}_h^{n}, \bm{u}^{n+1}-\Pi_h \bm{u}^{n+1}, \delta_\tau \errn)  
-\mathcal{U}(\bm{u}_h^{n};\bm{u}_h^{n}, \bm{u}^{n+1}-\Pi_h \bm{u}^{n+1}, \delta_\tau \errn). 
\end{align*}
The term $U^n_1$ is bounded by Lemma \ref{lemma:bound_I1} in the Appendix, 
by the inverse inequality \eqref{eq:inverse_estimate} and by the approximation property \eqref{eq:approximation_prop_2}.
\begin{align*}
|U_1^n| &\leq C \tau h^{-d/6} \Vert \delta_\tau \bm{u}_h^n\Vert \, \Vert \bm{u}^{n+1}-\Pi_h \bm{u}^{n+1}\Vert_{\mathrm{DG}}
\, \Vert \delta_\tau \tilde{\bm e}_h^{n+1} \Vert_{\mathrm{DG}}\\
&\leq C \tau \Vert \delta_\tau \bm{u}_h^n\Vert \, \vert \bm{u}^{n+1}\vert_{H^2(\Omega)}
\, \Vert \delta_\tau \tilde{\bm e}_h^{n+1} \Vert_{\mathrm{DG}}.
\end{align*}
 We use \eqref{eq:split_ac2}, \eqref{eq:second_estimate_nonlinear_term_C1} and \eqref{eq:second_estimate_nonlinear_term} to bound the remaining terms in $\eta^n_1 + \eta^n_3$.  
\begin{align}
|\eta^n_1 +\eta^n_3| & \leq C\tau\left( \|\bm{u}_h^{n-1}\| |\delta_\tau \bm{u}^{n+1}|_{H^2(\Omega)} +  \| \delta_\tau \bm{u}_h^n\||\delta_\tau \bm{u}^{n+1}|_{H^2(\Omega)} \right)\| \delta_\tau \errn \|_{\DG}. \nonumber 
\end{align}
Further, with the triangle inequality, \eqref{eq:approximation_prop_1}, and a Taylor expansion, we have: 
\begin{align}
 \|\delta_{\tau} \bm{u}_h^{n}\|^2 & \leq 2 \|\delta_{\tau} \bm{e}_h^n\|^2 + 4 \|\delta_\tau \Pi_h \bm{u}^n - \delta_\tau \bm{u}^n\|^2 + 4  \| \delta_\tau \bm{u}^n  \|^2 \nonumber \\ 
& \leq 2 \tau^{-2} \|\bm{e}_h^n - \bm{e}_h^{n-1} \|^2 +C \tau^{-1} h^2 \int_{t^{n-1}}^{t^{n}}|\partial_t \bm{u}|_{H^1(\Omega)}^2 + C \tau^{-1} \int_{t^{n-1}}^{t^n} \| \partial_t \bm{u}\|^2 . \label{eq:delta_tau_uh}
\end{align}
 Hence, \eqref{eq:controlling_triplenorm_delta}, \eqref{eq:delta_tau_uh}, Lemma \ref{lemma:first_err_estimate}, Young's inequality, and the assumption that $\bm{u} \in L^{\infty}(0,T; H^{k+1}(\Omega)^d)$ yield the following bound.
\begin{align}
|\eta_1^n + \eta_3^n|  \leq  \epsilon \tau\mu   \|\delta_\tau \errn \|_{\DG}^2 + \frac{C}{\epsilon \mu \tau} \|\bm{e}_h^n - \bm{e}_h^{n-1} \|^2  \nonumber \\  +  \frac{C}{\epsilon \mu}\left(1+\frac{1}{\mu}\right)\int_{t^{n-1}}^{t^{n+1}} \| \partial_t \bm{u}\|_{H^2(\Omega)}^2.\nonumber 
\end{align}
Similarly, we use  \eqref{eq:splitting_technique} and write: 
\begin{equation}
\eta^n_2+ \eta^n_4  = -\tau a_\mathcal{C}(\bm{u}_h^{n-1}; \bm{u}_h^{n-1}, \delta_\tau \errn, \delta_\tau \errn )  
+  U^n_2 ,
\label{eq:eta2eta4}
\end{equation}
with
\begin{align*}
 U^n_2 = &\mathcal{U}(\bm{u}_h^n; \bm{u}^n_h, \errn , \delta_\tau \errn )  -  \mathcal{U}(\bm{u}_h^{n-1} ; \bm{u}^n_h, \errn , \delta_\tau \errn ) \\
&-\mathcal{C}(\bm{u}^n_h - \bm{u}_h^{n-1}, \errn , \delta_\tau \errn ) +   \mathcal{U}(\bm{u}_h^{n-1} ; \bm{u}^n_h - \bm{u}_h^{n-1}, \errn , \delta_\tau \errn ).
\end{align*}
By the positivity property of $a_\mathcal{C}$ \eqref{eq:cpositivity}, the first term in the right-hand side of 
\eqref{eq:eta2eta4} is non-positive.  The first two terms of $U_2^n$ are bounded by Lemma \ref{lemma:bound_I1} 
and the last two terms by Lemma \ref{lemma:bd_eta_1_eta_2_semi}. 
\[
| U_2^n | \leq C \| \bm{u}_h^n - \bm{u}_h^{n-1}\|_{L^3(\Omega)} \| \errn \|_{\DG} \|\delta_\tau \errn \|_{\DG}.
\]
Note that, with a triangle inequality, \eqref{eq:discreter_poincare}, and the stability of the interpolant \eqref{eq:approximation_prop_2}, 
\[
\|\bm{u}_h^{n} - \bm{u}_h^{n-1} \|^2_{L^3(\Omega)}
 \leq 2\tau^2 \|\delta_\tau \bm{e}_h^n \|^2_{L^3(\Omega)} + C \tau \int_{t^{n-1}}^{t^n} \vert \partial_t \bm{u}\vert^2_{H^1(\Omega)}. 
\]
With Young's inequality and the above bounds, we attain 
\begin{multline}
|U^n_2| \leq \epsilon \tau \mu \| \delta_\tau \errn\|_{\DG}^2 + \frac{C}{\epsilon\mu} \tau \| \delta_\tau \bm{e}_h^n \|_{L^3(\Omega)}^2\|\errn\|^2_{\DG} \\ + \frac{C}{\epsilon \mu} \|\errn\|^2_{\DG} \int_{t^{n-1}}^{t^n} |\partial_t \bm{u}|_{H^1(\Omega)}^2. 
\end{multline}
Therefore,  we obtain the following bound on the nonlinear terms: 
\begin{align}
&\mathcal{N}_1(\delta_\tau \errn) + \mathcal{N}_2(\delta_\tau \errn) \leq 4\epsilon\tau \mu  \| \delta_\tau \errn  \|_{\DG}^2 + \frac{C}{\epsilon \mu}\tau \|\delta_\tau \bm{e}_h^n \|_{L^3(\Omega)}^2\|\errn \|^2_{\DG}  \nonumber   \\   & +\frac{C}{\epsilon \mu}\tau^{-1} \| \bm{e}_h^n - \bm{e}_h^{n-1}\|^2 + \frac{C}{\epsilon \mu} \|\errn\|^2_{\DG} \int_{t^{n-1}}^{t^n} |\partial_t \bm{u}|_{H^1(\Omega)}^2 \nonumber \\ 
&  + \frac{C}{\epsilon\mu}(1+\mu^{-1} + \tau^2+ h^{2k+2} +   \| \bm{e}_h^n\|^2 )\int_{t^{n-1}}^{t^{n+1}}\|\partial_t \bm{u}\|^2_{H^2(\Omega)} .  
\end{align}
We now handle $\hat{R}_t(\delta_\tau\errn)$. For details, we refer to Lemma \ref{lemma:bound_hat_Rt_stability_proof} in the Appendix. 
With Taylor expansions, approximation properties \eqref{eq:approximation_prop_1} - \eqref{eq:approximation_prop_2}, and \eqref{eq:cfl_cond_detla}, we have for $n\geq1$:  
\begin{align}
    \hat{R}_t(\delta_\tau\errn) & \leq \epsilon \tau \mu \| \delta_\tau \errn\|_{\DG}^2 +  \frac{C}{\epsilon \mu} \int_{t^{n-1}}^{t^{n+1}} \| \partial_{tt} \bm{u }\|^2 + \frac{C}{\epsilon \mu} \int_{t^{n-1}}^{t^{n+1}}|\partial_t \bm{u}|_{H^{2}(\Omega)}^2. \label{eq:bound_hat_Rt_stability_proof}
\end{align}
With the above bounds and with choosing $\epsilon  = 1/56$, \eqref{eq:error_eq_4_dt} reads:
 \begin{align}
    &\frac{1}{2}( \|\delta_\tau \bm{e}_h^{n+1}\|^2 - \| \delta_\tau \bm{e}_h^n \|^2 + \|\delta_\tau \bm{e}_h^{n+1} - \delta_\tau \bm{e}_h^n \|^2 ) + \frac{ \tau \mu }{8}\|\delta_\tau \bm{\tilde{e}}_h^{n+1} \|_{\DG}^2  +\frac{\tau^2}{16}|\delta_\tau \phi_h^{n+1}|^2_{\DG} \nonumber \\ & + \frac{\tau^2}{2} (\aelip(\hat{\xi}_h^{n+1}, \hat{\xi}_h^{n+1}) - \aelip(\hat{\xi}_h^n , \hat{\xi}_h^n))+ \frac{\tau^2}{2}(\tilde{A}^n_1 - \tilde{A}^n_2) + \frac{\tau}{2\delta\mu}( \| \hat{S}_h^{n+1} \|^2 - \| \hat{S}_h^{n} \|^2 )    \nonumber \\ &\leq  \frac{C}{\mu} \int_{t^{n-1}}^{t^{n+1}} \| \partial_{tt} \bm{u}\|^2  + C(\tau +h^{2}\mu^{-1})  \int_{t^n}^{t^{n+1}}  |\partial_t p|^2_{H^1(\Omega)}  + \frac{C}{\mu}\tau \|\delta_\tau \bm{e}_h^n \|_{L^3(\Omega)}^2\|\errn \|^2_{\DG}  \nonumber \\   &  + \frac{C}{\mu}(1+\mu^2 + \mu^{-1} +  \| \bm{e}_h^n\|^2+  \|\errn\|^2_{\DG} )\int_{t^{n-1}}^{t^{n+1}}\|\partial_t  \bm{u}\|^2_{H^2(\Omega)} \nonumber  \\ 
    &+ \frac{C}{\mu}\tau^{-1} \| \bm{e}_h^n - \bm{e}_h^{n-1}\|^2 + \delta_{n,1} |b(\bm{e}_h^0, \delta_\tau \phi_h^2)|.
    \label{eq:error_eq_5_dt}
    \end{align}
   Note the following 
   \begin{multline}
      \frac{\tau^2}{2} \sum_{n=1}^m (\tilde{A}^n_1 - \tilde{A}^n_2 ) = \frac{\tau^2}{2}\left(  \sum_{e\in \Gamma_h}\frac{\tilde{\sigma}}{h_e} \| [\delta_\tau \phi_h^{m+1} ]\|^2 - \| \bm{G}_h([\delta_\tau \phi_h^{m+1}])\|^2 \right)  \\ + \frac{\tau^2}{2}\left(  \| \bm{G}_h([\delta_\tau \phi_h^{1}])\|^2 -\sum_{e\in \Gamma_h}\frac{\tilde{\sigma}}{h_e} \| [\delta_\tau \phi_h^{1} ]\|^2 \right). 
   \end{multline}
Using \eqref{eq:lift_prop_g}, the first term in the above expression is non-negative since $\tilde{\sigma} \geq \tilde{M}_k$ and the second term is bounded in absolute value by $C\tau^2 | \delta_\tau \phi_h^{1}|_{\DG}^2$.   
We sum \eqref{eq:error_eq_5_dt} from $n=1$ to $n=m$ and use the inverse estimate \eqref{eq:inverse_estimate}.   The continuity of
$\aelip$, Lemma \ref{lemma:first_err_estimate}, \eqref{eq:first_error_estimate}, the lower bound on $\tau$ in  \eqref{eq:cfl_cond_detla}, and the regularity assumptions yield
\begin{align}\label{eq:error_eq_6_dt}
& \|\delta_\tau \bm{e}_h^{m+1}\|^2 +  \sum_{n=1}^m \|\delta_\tau \bm{e}_h^{n+1} - \delta_\tau \bm{e}_h^{n}\|^2 + \frac{1}{8} \sum_{n=1}^m \tau^2 \vert \delta_\tau \phi_h^{n+1} \vert_{\DG}^2 + \frac{ \mu}{4} \sum_{n=1}^{m} \tau \|\delta_\tau \errn  \|_{\DG}^2 \nonumber \\ &\leq C_\mu \tilde{K}_\mu   + \| \delta_\tau \bm{e}_h^1 \|^2   + \frac{\tau}{2\delta \mu}\| \hat{S}_h^1\|^2 + C\tau^2 (|\hat{\xi}_h^1|_{\DG}^2 + |\delta_\tau \phi_h^{1}|_{\DG}^2)  \nonumber \\ & \quad  + \frac{C}{\mu}\tau \sum_{n=1}^{m} h^{-d/3}\|\delta_\tau \bm{e}_h^n \|^2\|\errn\|_{\DG}^2+ |b(\bm{e}_h^0, \delta_\tau \phi_h^2)|. 
 \end{align} 
where $\tilde{K}_\mu =  \sum_{i=-3}^{1} \mu^{i}$. To handle the last term, we apply \eqref{eq:boundformb}, the approximation properties, and \eqref{eq:cfl_cond_detla}.
\begin{equation}
   |b(\bm{e}_h^0, \delta_\tau \phi_h^2)| \leq C\|\bm{e}_h^0\| |\delta_\tau \phi_h^2|_{\DG}\leq C\tau|\bm{u}^0|_{H^{2} (\Omega)}|\delta_\tau \phi_h^2|_{\DG} \leq  C + \frac{\tau^{2}}{32} |\delta_\tau \phi_h^2|^2_{\DG}. 
\label{eq:boundbeh0}
\end{equation}
 We now derive a bound for the initial error $\Vert \bm{e}_h^1-\bm{e}_h^0\Vert$ by following an argument in
\cite{inspaper1}. We start with the equation (6.89)  in \cite{inspaper1} and choose $n=1$. 
All the terms in the right-hand side of (6.89) are bounded exactly in \cite{inspaper1} except for the time error term
$R_t(\err)$ defined in (6.15) or \eqref{eq:defRt}.  Under enough regularity for the exact solution, namely
$\partial_t \bm{u}\in L^2(0,T;H^2(\Omega)^d)$, we have 
\begin{align*}  
 |R_t(\err)|  \leq  \frac{C}{\mu}   \tau^2 \int_{t^{n-1}}^{t^n} \| \partial_{tt} \bm{u}\|^2  + \frac{C}{\mu} h^4  \int_{t^{n-1}}^{t^n} |\partial_t \bm{u}|_{H^2(\Omega)}^2  + \frac{ \tau \mu}{32} \| \err \|^2_{\DG}.
\end{align*}
This yields the following estimate for the initial error
\begin{align*} 
&\frac12 \| \bm{e}_h^1 - \bm{e}_h^0 \|^2 
+ \frac{\tau^2}{16} ( |\phi_h^1|_{\DG}^2 +  | p_h^1 + S_h^1 |^2_{\DG} ) 
+ \frac{\tau }{2\delta \mu }  \|S_h^1\|^2 +\frac{\tau \mu}{8} \|\tilde{\bm{e}}_h^1 \|_{\DG}^2 \\ & \leq C \left( \frac{1}{\mu} + \mu + 1\right) \tau^2  + \tau \vert  b(\bm{e}_h^0, \phi_h^1)\vert + \frac{1}{2}\tau |b(\bm{e}_h^0, \Pi_h \bm{u}^1 \cdot \tilde{\bm{e}}_h^1)|,. 
\end{align*}
where $S_h^1 = \delta \mu (\nabla_h \cdot\bm{v}_h^1-R_h([\bm{v}_h^1]))$. Recall that by \eqref{eq:def_pi_h} and \eqref{eq:def_b_lift}, we have for $n \geq 0$ : 
\[b( \Pi_h \bm{u}^n, q_h ) = (\nabla_h \cdot \Pi_h \bm{u}^n - R_h([\Pi_h \bm{u}^n] ) , q_h) = 0,\,\,  \forall  q_h \in M_h \] 
Thus,  $\hat{S}_h^1 = \delta \mu ( \nabla_h \cdot \delta_\tau \bm{v}_h^1 - R_h([\delta_\tau \bm{v}_h^1])$. By recalling that $\bm{v}_h^0 = \Pi_h \bm{u}^0$ and by using the above equality, we obtain 
\[
\hat{S}_h^1 = \frac{1}{\tau}S_h^1, \quad \hat{\xi}_h^1 = \frac{1}{\tau} (p_h^1+S_h^1).
\]
With \eqref{eq:boundformb} and approximation property \eqref{eq:approximation_prop_1}, we have
\begin{align*}
    b(\bm{e}_h^0, \phi_h^1) \leq C\|\bm{e}_h^0\||\phi_h^1|_{\DG} \leq \frac{\tau}{32}|\phi_h^1|^2_{\DG} + C\tau^{-1}h^{2k+2}|\bm{u}^0|^2_{H^{k+1}(\Omega)}. 
\end{align*}
We split the last term, and we use  \eqref{eq:first_estimate_nonlinear_form}, \eqref{eq:second_estimate_nonlinear_term_C1}, \eqref{eq:approximation_prop_1}, and the regularity assumption. 
\begin{align*}
&|b(\bm{e}_h^0, \Pi_h \bm{u}^1 \cdot \tilde{\bm{e}}_h^1)| \leq |b(\bm{e}_h^0, (\Pi_h \bm{u}^1 - \bm{u}^1) \cdot \tilde{\bm{e}}_h^1)| + |b(\bm{e}_h^0, \bm{u}^1 \cdot \tilde{\bm{e}}_h^1)| \\ 
& \leq C \|\bm{e}_h^0\|(|\bm{u}^1|_{H^{k+1}(\Omega)} + \vertiii{\bm{u}^1}) \|\tilde{\bm{e}}_h^1\|_{\DG} \leq \frac{C}{\mu}h^{2k+2} + \frac{\mu}{16}\|\tilde{\bm{e}}_h^1\|_{\DG}^2. 
\end{align*}
With \eqref{eq:cfl_cond_detla}, the above bounds, and with recalling that  $\phi_h^0 =  p_h^0 = 0$,  we obtain  
\begin{multline}
\| \delta_\tau \bm{e}_h^1 \|^2  + \frac{\tau}{\delta \mu}\| \hat{S}_h^1\|^2 + \frac{\tau^2}{16}   (|\hat{\xi}_h^1|_{\DG}^2  + | \delta_\tau \phi_h^{1} |^2_{\DG} ) + \frac{\tau\mu}{16} \|\tilde{\bm{e}}_h^1\|_{\DG}^2
\\\leq  C \left( \frac{1}{\mu} + \mu + 1\right).  \label{eq:initial_error_eh}
\end{multline}
Thus, with \eqref{eq:boundbeh0} and \eqref{eq:initial_error_eh},  the bound \eqref{eq:error_eq_6_dt} becomes 
\begin{multline}
    \|\delta_\tau \bm{e}_h^{m+1}\|^2 + \sum_{n=1}^m \|\delta_\tau \bm{e}_h^{n+1} - \delta_\tau \bm{e}_h^{n}\|^2 
+ \frac{\tau^2}{32} \sum_{n=1}^m  \vert \delta_\tau \phi_h^{n+1} \vert_{\DG}^2+ \frac{\mu \tau}{4} \sum_{n=1}^{m} \|\delta_\tau \errn  \|_{\DG}^2 \\ \leq C_\mu \tilde{K}_\mu + \frac{C}{\mu} \tau \sum_{n=1}^{m} h^{-d/3}\|\delta_\tau \bm{e}_h^n \|^2\|\errn\|_{\DG}^2. \label{eq:error_eq_semi_final_dt} 
\end{multline}
It remains to handle the last term. We will follow a similar technique as the one used in  \cite{nochetto2005gauge}.  From Lemma \ref{lemma:first_err_estimate} and condition \eqref{eq:cfl_cond_detla}, we have: 
\begin{align*} 
    \|\delta_\tau \bm{e}_h^n \|^2 & = \frac{1}{\tau^2} \|\bm{e}_h^n - \bm{e}_h^{n-1} \|^2  \leq C_\mu \left(1 + 1/\mu + \mu \right)\tau^{-1}, \quad 
     \sum_{n=1}^{m} \|\errn \|_{\DG}^2 \leq C_\mu \tilde{K}_\mu.   
\end{align*}
Using the above estimates in \eqref{eq:error_eq_semi_final_dt} and for $h$ small enough, we obtain:
\begin{equation}
\| \delta_\tau \bm{e}_h^{m+1} \|^2 \leq C_\mu \tilde{K}_\mu + C_\mu^2  \tilde{K}_\mu^2 h^{-d/3} \leq C_\mu^2 \tilde{K}_\mu^2 h^{-d/3}, \quad m \geq 0. 
\end{equation}
We will iteratively apply this bound in  \eqref{eq:error_eq_semi_final_dt}. Applying it once, and using Lemma \ref{lemma:first_err_estimate}: 
\begin{align}
    \|\delta_\tau \bm{e}_h^{m+1}\|^2 \leq  C_\mu \tilde{K}_\mu + C_\mu^2 \mu^{-1} \tilde{K}_\mu^2 \tau h^{-2d/3} \sum_{n=1}^{m} \|\errn\|_{\DG}^2  \leq C_\mu \tilde{K}_\mu + C_\mu^3\mu^{-1}\tilde{K}_\mu^3\tau h^{-2d/3}. \nonumber  
\end{align}
Noting that by \eqref{eq:cfl_cond_detla}, we have that $\tau h^{-2d/3}\leq c_2 h^{(\gamma -1 )d/3}$. Since $\gamma < 1$, for $h$ small enough, we have: 
\begin{align*}
    \|\delta_\tau \bm{e}_h^{m+1}\|^2 
    \leq C_\mu \tilde{K}_\mu + C_\mu^3\mu^{-1}\tilde{K}_\mu^3c_2 h^{(\gamma -1 )d/3} 
\leq C_\mu^3 \mu^{-1}\tilde{K}_\mu^3c_2 h^{(\gamma -1 )d/3}.
\end{align*}
Select $\beta$ to be the smallest integer such that $\beta\gamma\geq1$ and  iteratively apply above bound in \eqref{eq:error_eq_semi_final_dt} $\beta$ times. With the condition on $\tau$ \eqref{eq:cfl_cond_detla},we obtain:  
\begin{align}
    \| \delta_\tau \bm{e}_h^{m+1} \|^2   \leq C_\mu \tilde{K}_\mu +  (C_\mu \tilde{K}_\mu)^{\beta + 2}\mu^{-\beta}c_2^{\beta}h^{(\beta \gamma -1)d/3} \leq C_\mu \tilde{K}_\mu +(C_\mu \tilde{K}_\mu)^{\beta + 2}\mu^{-\beta}c_2^{\beta}. \nonumber
\end{align}
The result then follows. 
\end{proof}
We will now use Lemma \ref{lemma:stability_time_derivative} and the dual problem \eqref{eq:aux_pb_1} - \eqref{eq:aux_pb_3} to obtain an $\ell^2$ error estimate for $\delta_\tau \bm{e}_h^{n+1}$. 
\begin{lemma}\label{lemma:error_delta_velocity}
 We assume that the hypothesis of Lemma \ref{lemma:stability_time_derivative} hold. If $\tau$ is small enough $(\tau\leq \tau_0)$,  we have the following error estimate.  For $1 \leq m \leq N_T-1$,  
    \begin{equation}
       \frac{\mu}{2} \tau \sum_{n=1}^m \| \delta_\tau \bm{e}_h^{n+1}\|^2 \leq \tilde{C}_{\gamma,\mu}(\tau + h^{2k}). 
    \end{equation}
    Here, $\tilde{C}_{\gamma,\mu} = C_\mu K_\mu + C_{\gamma, \mu} C_\mu (1+K_\mu)$ is independent of $h$ and $\tau$.
\end{lemma}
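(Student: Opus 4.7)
The plan is to mirror the duality argument used in the proof of Theorem \ref{theorem:improved_estimate_velocity}, but applied to the time-differenced error equation \eqref{eq:first_err_eq_dteh}. I would introduce an auxiliary error function $\tilde{\bm{\chi}}(t)$ adapted to the time derivative, e.g.\ $\tilde{\bm{\chi}}(t)=\delta_\tau \bm{u}_h^{n+1}-\partial_t \bm{u}(t)$ for $t\in(t^n,t^{n+1}]$, and let $(\tilde{\bm{U}}(t),\tilde{P}(t))\in H_0^1(\Omega)^d\times L^2_0(\Omega)$ solve the dual Stokes problem \eqref{eq:aux_pb_1}--\eqref{eq:aux_pb_3} with right-hand side $\tilde{\bm{\chi}}(t)$, with DG approximation $(\tilde{\bm{U}}_h(t),\tilde{P}_h(t))$. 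The regularity and stability bounds from Lemma \ref{lemma:regularity_bounds_dual_problem} then transfer verbatim with $\bm{\chi}$ replaced by $\tilde{\bm{\chi}}$. Note that $\tilde{\bm{U}}_h^0=\bm{0}$ by the same argument as \eqref{eq:UH0_ZERO} applied to the initial projection data.

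Testing \eqref{eq:first_err_eq_dteh} with $\bm{\theta}_h=\tilde{\bm{U}}_h^{n+1}$ and using the divergence-constraint identity \eqref{eq:property_err_delta} together with $b(\tilde{\bm{U}}_h^{n+1},q_h)=0$, the linear left-hand side reproduces the key structure seen in the proof of Theorem \ref{theorem:improved_estimate_velocity}: a telescoping contribution in $a_\mathcal{D}(\tilde{\bm{U}}_h^{n+1},\tilde{\bm{U}}_h^{n+1})$ plus a dissipative piece $\tau\mu\|\delta_\tau \bm{e}_h^{n+1}\|^2$ produced by testing \eqref{eq:aux_1_dg} for the dual with $\bm{\theta}_h=\delta_\tau \bm{e}_h^{n+1}$, up to an elliptic-projection correction for $\Pi_h \delta_\tau \bm{u}^{n+1}-\delta_\tau \bm{u}^{n+1}$ handled as in \eqref{eq:pih_un_u_dual}. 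The pressure term $\tau b(\tilde{\bm{U}}_h^{n+1},\delta_\tau p_h^n-\delta_\tau p^{n+1})$ is bounded exactly as \eqref{eq:bd_b_dual} but in terms of $|\delta_\tau p^{n+1}|_{H^k(\Omega)}$, providing an $h^{2k+2}$ contribution; the time-truncation term $\hat{R}_t(\tilde{\bm{U}}_h^{n+1})$ yields a $\tau\|\partial_{tt}\bm{u}\|^2$ piece and a $\tau\|\tilde{\bm{U}}_h^{n+1}\|_{\DG}^2$ piece as in \eqref{eq:bounding_time_err_dual}.

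The nonlinear contributions $\mathcal{N}_1(\tilde{\bm{U}}_h^{n+1})+\mathcal{N}_2(\tilde{\bm{U}}_h^{n+1})$ are split exactly as in \eqref{eq:expanding_N1}--\eqref{eq:expanding_N2} and each sub-term is treated by the same decomposition used in the proof of Theorem \ref{theorem:improved_estimate_velocity}: the most delicate piece (the analogue of $A_1^n$ there) is handled by \eqref{eq:integration_by_parts_c} to expose the split $\tilde{\bm{U}}_h^{n+1}-\tilde{\bm{U}}^{n+1}$ plus $\tilde{\bm{U}}^{n+1}$, after which \eqref{eq:regularity_infinity_norm}--\eqref{eq:error_dg_aux} and the inverse/trace estimates \eqref{eq:inverse_estimate}--\eqref{eq:trace_ineq_discrete} apply. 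Crucially, the stability estimate from Lemma \ref{lemma:stability_time_derivative} supplies summable bounds for $\|\delta_\tau \tilde{\bm{e}}_h^{n+1}\|_{\DG}^2$, $\tau^2|\delta_\tau\phi_h^{n+1}|_{\DG}^2$, and $\|\delta_\tau \bm{e}_h^{n+1}\|^2$, while Lemma \ref{lemma:first_err_estimate} provides $\|\tilde{\bm{e}}_h^n\|_{\DG}^2$ and $\|\bm{e}_h^n\|^2$. After summation from $n=1$ to $m$, using the initial bound \eqref{eq:initial_error_eh} for $n=0$, the coercivity \eqref{eq:coercivity_a_ellip}, and invoking discrete Gronwall (under $\tau\leq\tau_0$ with $\tau_0$ depending on $\mu$), the leading-order estimate $\tilde{C}_{\gamma,\mu}(\tau+h^{2k})$ emerges, with the factor $C_\mu K_\mu$ coming from Theorem \ref{theorem:improved_estimate_velocity}-type contributions and $C_{\gamma,\mu}C_\mu(1+K_\mu)$ from the nonlinear interactions controlled by the stability bound on $\delta_\tau$-quantities.

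The main obstacle will be the nonlinear terms of the form $\tau\mathcal{C}(\delta_\tau \bm{e}_h^n,\bm{u}^{n+1},\tilde{\bm{U}}_h^{n+1})$ and their upwind counterparts: to avoid a $\|\delta_\tau \bm{e}_h^n\|_{\DG}$ factor (which Lemma \ref{lemma:stability_time_derivative} does not control pointwise in $n$), one must shift derivatives to $\tilde{\bm{U}}^{n+1}$ via \eqref{eq:integration_by_parts_c} and then absorb the difference $\tilde{\bm{U}}_h^{n+1}-\tilde{\bm{U}}^{n+1}$ using \eqref{eq:error_dg_aux}, paying the factor $h$ that converts the DG norm into an $L^2$ norm on $\delta_\tau \bm{e}_h^n$. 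This is the step where the $h^2\leq\tau$ assumption from Theorem \ref{theorem:improved_estimate_velocity} and the CFL condition \eqref{eq:cfl_cond_detla} are likely to be reused in order to ensure that spurious $h^{-d/3}$ factors from inverse inequalities on $\delta_\tau \bm{e}_h^n$ remain controlled, analogous to the iteration argument at the end of the proof of Lemma \ref{lemma:stability_time_derivative}.
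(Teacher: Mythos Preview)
Your overall strategy is right and matches the paper: a duality argument applied to the time-differenced error equation \eqref{eq:first_err_eq_dteh}, with the nonlinear terms split via \eqref{eq:expanding_N1}--\eqref{eq:expanding_N2} and the delicate pieces handled through \eqref{eq:integration_by_parts_c} and the regularity bounds for the dual, exactly as you outline. The paper, however, does \emph{not} introduce a new dual problem with a new right-hand side $\tilde{\bm{\chi}}$; it simply takes the discrete time derivative of the \emph{existing} dual solution, so that $\delta_\tau \bm{U}_h^{n+1}$ solves the dual with data $\delta_\tau \bm{\chi}^{n+1}=\delta_\tau \bm{u}_h^{n+1}-\delta_\tau \bm{u}^{n+1}$ by linearity. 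This avoids your definitional ambiguity at $t=0$ and makes all regularity bounds of Lemma~\ref{lemma:regularity_bounds_dual_problem} transfer automatically. Your claim that $\tilde{\bm{U}}_h^0=\bm 0$ ``by the same argument as \eqref{eq:UH0_ZERO}'' is not justified: with $\tilde{\bm{\chi}}(t)=\delta_\tau \bm{u}_h^{n+1}-\partial_t\bm{u}(t)$ there is no $L^2$-orthogonality at the initial step. The paper instead bounds $\|\delta_\tau\bm{U}_h^1\|_{\DG}^2$ directly by revisiting the $n=1$ case of the proof of Theorem~\ref{theorem:improved_estimate_velocity} with sharper Young inequalities, obtaining $\|\bm{U}_h^1\|_{\DG}^2\le C K_\mu\tau^3$; this is a necessary ingredient you have omitted.

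A second point: the time-truncation term cannot be bounded ``as in \eqref{eq:bounding_time_err_dual}.'' The quantity $\bar R_t(\delta_\tau\bm{U}_h^{n+1})$ involves a second difference of $\bm{u}$ in time, and a direct Cauchy--Schwarz bound would require control of $\partial_{ttt}\bm{u}$, which is not assumed. The paper introduces $\bm{G}^n=\tau(\partial_t\bm{u})^n-(\bm{u}^n-\bm{u}^{n-1})$, writes $\bar R_t$ as a telescoping difference $\frac{1}{\tau}(\bm G^{n+1},\delta_\tau\bm U_h^{n+1})-\frac{1}{\tau}(\bm G^{n},\delta_\tau\bm U_h^{n})$ plus a cross term $\frac{1}{\tau}(\bm G^n,\delta_\tau\bm U_h^{n+1}-\delta_\tau\bm U_h^n)$, and absorbs the latter into the dissipative piece $\|\delta_\tau\bm U_h^{n+1}-\delta_\tau\bm U_h^n\|_{\DG}^2$; the telescoped boundary terms are then controlled by the coercivity of $a_{\mathcal D}$ at $n=m$ and the initial bound on $\delta_\tau\bm U_h^1$. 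This summation-by-parts step is essential. With these two corrections your plan coincides with the paper's proof.
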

\begin{proof}
From \eqref{eq:aux_1_dg}, we have for $n \geq 0$
\begin{align}
    a_\mathcal{D}(\delta_\tau \bm{U}_h^{n+1}, \bm{\theta}_h) - b(\bm{\theta}_h, \delta_\tau P_h^{n+1}) = (\delta_\tau \bm{\chi}^{n+1}, \bm{\theta}_h), \quad  \forall \bm{\theta}_h \in \bm{X}_h. \label{eq:aux_pb_dt}
\end{align}
Let $\bm{\theta}_h  = \delta_\tau \bm{U}_h^{n+1}$ in \eqref{eq:first_err_eq_dteh} and use \eqref{eq:aux_2_dg}. After some rearranging, we have  
\begin{multline}
    (\delta_\tau (\bm{v}_h^{n+1} - \bm{u}^{n+1}) - \delta_\tau \bm{\chi}^n , \delta_\tau \bm{U}_h^{n+1}) + \tau \mu a_{\mathcal{D}}(\delta_\tau \bm{\tilde{e}}_h^{n+1}, \delta_\tau \bm{U}_h^{n+1})\\ = - \tau b(\delta_\tau \bm{U}_h^{n+1}, \delta_\tau p^{n+1})- \tau \mu a_{\mathcal{D}}(\delta_\tau \Pi_h \bm{u}^{n+1} - \delta_\tau \bm{u}^{n+1}, \delta_\tau \bm{U}_h^{n+1}) \\  + \mathcal{N}_1(\delta_\tau \bm{U}_h^{n+1})   + \mathcal{N}_2(\delta_\tau \bm{U}_h^{n+1})+ \bar{R}_t(\delta_\tau \bm{U}_h^{n+1}).  \label{eq:first_err_eq_delta}
\end{multline}
Here, $$ \bar{R}_t(\delta_\tau \bm{U}_h^{n+1}) = ( (\partial_t \bm{u})^{n+1} - (\partial_t \bm{u})^n -(\delta_\tau \bm{u}^{n+1} - \delta_\tau \bm{u}^n), \delta_\tau \bm{U
}_h^{n+1}).$$
The difficulty is in bounding the last three terms. For completeness, we provide an overview of the way the other terms are handled. We first observe that from \eqref{eq:update_velocity_delta},  \eqref{eq:aux_2_dg}, and \eqref{eq:aux_pb_dt} we have:
\begin{align*}
& (\delta_\tau (\bm{v}_h^{n+1} - \bm{u}^{n+1})  - \delta_\tau \bm{\chi}^n , \delta_\tau \bm{U}_h^{n+1})  = (\delta_\tau \bm{\chi}^{n+1} )- \delta_\tau \bm{\chi}^{n}, \delta_\tau \bm{U}_h^{n+1}) \\ & \quad - \tau b(\delta_\tau \bm{U}_h^{n+1}, \delta_\tau \phi_h^{n+1}) =  a_\mathcal{D}(\delta_\tau \bm{U}_h^{n+1} - \delta_\tau \bm{U}_h^n, \delta_\tau \bm{U}_h^{n+1}) 
\\ & =  \frac{1}{2}\left(a_\mathcal{D}(\delta_\tau \bm{U}_h^{n+1},\delta_\tau \bm{U}_h^{n+1}) - a_\mathcal{D}(\delta_\tau \bm{U}_h^{n},\delta_\tau \bm{U}_h^{n})\right) \\ & \quad +\frac{1}{2}a_\mathcal{D}(\delta_\tau \bm{U}_h^{n+1}- \delta_\tau \bm{U}_h^{n}, \delta_\tau \bm{U}_h^{n+1}- \delta_\tau \bm{U}_h^{n} ).
\end{align*}
In addition, with choosing $\bm{\theta}_h = \delta_\tau \errn$ in \eqref{eq:aux_pb_dt}, we have 
\begin{align*}
&  a_\mathcal{D}(\delta_\tau \errn, \delta_\tau \bm{U}_h^{n+1})=  (\delta_\tau \bm{\chi}^{n+1}, \delta_\tau \errn) +  b(\delta_\tau \errn, \delta_\tau P_h^{n+1})  \\ 
& = (\delta_\tau \bm{\chi}^{n+1},\delta_\tau \errn - \delta_\tau \bm{e}_h^{n+1}) + (\delta_\tau (\Pi_h \bm{u}^{n+1}) - \delta_\tau \bm{u}^{n+1}, \delta_\tau \bm{e}_h^{n+1}) + \|\delta_\tau \bm{e}_h^{n+1}\|^2 \\ & \quad   + b(\delta_\tau \errn - \delta_\tau \bm{e}_h^{n+1}, \delta_\tau P_h^{n+1})  +  b(\delta_\tau \bm{e}_h^{n+1}, \delta_\tau P_h^{n+1}).
\end{align*}
From \eqref{eq:aux_pb_1}, we have for $n \geq 1$  
\begin{align}
-\Delta (\delta_\tau \bm{U}^{n+1}) + \nabla (\delta_\tau P^{n+1}) = \delta_\tau \bm{\chi}^{n+1}. 
\end{align}
Since the domain is assumed to be  convex and similar to \eqref{eq:regularity_assumption} and \eqref{eq:int99}, we have
\begin{align}
\|\delta_\tau \bm{U}_h^{n+1}\|_{\DG} +  \vert \delta_\tau P_h^{n+1}\vert_{\mathrm{DG}}  \leq C( \| \delta_\tau  \bm{U}^{n+1}\|_{H^2(\Omega)} + \vert \delta_\tau P^{n+1}\vert_{H^1(\Omega)})\nonumber\\ \leq C \|\delta_\tau \bm{\chi}^{n+1}\| \leq C \|\delta_\tau \bm{e}_h^{n+1}\| + C\tau^{-1/2}h^{k+1} \left(\int_{t^{n}}^{t^{n+1}} |\partial_t \bm{u}|^2_{H^{k+1}(\Omega)}\right)^{1/2}.  \label{eq:H2_delta_bound}
\end{align} 
From \eqref{eq:update_velocity_delta}, \eqref{eq:boundformb} and \eqref{eq:H2_delta_bound}, we have
\begin{align*}
|(\delta_\tau \bm{\chi}^{n+1},\delta_\tau \errn - \delta_\tau \bm{e}_h^{n+1})| 
\leq & \epsilon \| \delta_\tau \bm{e}_h^{n+1}\|^2 +  C \left(\frac{1}{\epsilon} +1\right)  \tau^2\vert \delta_\tau \phi_h^{n+1}\vert_{\DG}^2  \\ &\quad +  C \tau^{-1}h^{2k+2} \int_{t^{n}}^{t^{n+1}} |\partial_t \bm{u}|^2_{H^{k+1}(\Omega)}.
\end{align*}
Similarly with \eqref{eq:boundformb} and \eqref{eq:H2_delta_bound}, we have
\begin{align*}
|b(\delta_\tau \errn - \delta_\tau \bm{e}_h^{n+1}, \delta_\tau P_h^{n+1})|
\leq & \epsilon \| \delta_\tau \bm{e}_h^{n+1}\|^2 +  C \left(\frac{1}{\epsilon} +1\right)  \tau^2\vert \delta_\tau \phi_h^{n+1}\vert_{\DG}^2  \\ &\quad +  C \tau^{-1}h^{2k+2} \int_{t^{n}}^{t^{n+1}} |\partial_t \bm{u}|^2_{H^{k+1}(\Omega)}.
\end{align*}
In addition, from \eqref{eq:property_err_delta}, Cauchy-Schwarz's inequality and \eqref{eq:lift_prop_g}, we obtain: 
\begin{align*}
&|(\delta_\tau (\Pi_h \bm{u}^{n+1}) - \delta_\tau \bm{u}^{n+1}, \delta_\tau \bm{e}_h^{n+1})| + |b(\delta_\tau \bm{e}_h^{n+1}, \delta_\tau P_h^{n+1})| \\ &\leq C h^{k+1} |\delta_\tau \bm{u}^{n+1} |_{H^{k+1}(\Omega)} \|\delta_\tau \bm{e}_h^{n+1} \| + C\tau \vert \delta_\tau \phi_h^{n+1}\vert_{\DG} \vert \delta_\tau P_h^{n+1}\vert_{\DG} \\
& \leq  \epsilon \| \delta_\tau \bm{e}_h^{n+1}\|^2 +  C \left(\frac{1}{\epsilon} +1\right) \tau^2 | \delta_\tau \phi_h^{n+1}|_{\DG}^2 \\    & +C \left(\frac{1}{\epsilon} +1\right)  \tau^{-1}h^{2k+2} \int_{t^{n}}^{t^{n+1}} |\partial_t \bm{u}|^2_{H^{k+1}(\Omega)}  . 
\end{align*}
Further, note that by using \eqref{eq:aux_2_dg}, the definition of $L^2$ projection, and a bound similar to \eqref{eq:error_dg_aux},   we obtain: 
\begin{align*}
    & b(\delta_\tau \bm{U}_h^{n+1}, \delta_\tau p^{n+1})  =  b(\delta_\tau \bm{U}_h^{n+1}, \delta_\tau p^{n+1} - \pi_h (\delta_\tau p^{n+1})) \\ & \leq  \sum_{e \in \Gamma_h \cup \partial \Omega } \left|\int_e \{\delta_\tau p^{n+1} - \pi_h(\delta_\tau p^{n+1})\}[\delta_\tau \bm{U}_h^{n+1}]\cdot \bm{n}_e \right| \leq C h^2|\delta_\tau p^{n+1}|_{H^1(\Omega)} \|\delta_\tau \bm{\chi}^{n+1}\| \\ 
    & \leq \epsilon \mu \| \delta_\tau \bm{e}_h^{n+1}\|^2 + C\left(\frac{1}{\epsilon\mu}+1 \right)\tau^{-1} h^4  \int_{t^{n}}^{t^{n+1}} \vert \partial_t p \vert_{H^1(\Omega)}^2 \\ &  \quad  +  C \tau^{-1}h^{2k+2} \int_{t^{n}}^{t^{n+1}} |\partial_t \bm{u}|^2_{H^{k+1}(\Omega)}.
\end{align*}
By taking $\bm{\theta}_h = \Pi_h \delta_\tau \bm{u}^{n+1} - \delta_\tau Q_h \bm{u}^{n+1}$ in \eqref{eq:aux_pb_dt}  (where we recall $Q_h$ is the elliptic projection operator defined in \eqref{eq:elliptic_projection}),  using \eqref{eq:boundformb}  and \eqref{eq:first_err_eq_delta} , we obtain: 
\begin{align}
&|a_\mathcal{D}(\delta_\tau \Pi_h \bm{u}^{n+1} - \delta_\tau \bm{u}^{n+1}, \delta_\tau \bm{U}_h^{n+1})|  \leq |(\delta_\tau \bm{\chi}^{n+1},\delta_\tau  \Pi_h \bm{u}^{n+1} - \delta_\tau Q_h\bm{u}^{n+1} )|\nonumber \\  &   + | b(\delta_\tau \Pi_h  \bm{u}^{n+1} - \delta_\tau Q_h\bm{u}^{n+1}, \delta_\tau P_h^{n+1})|  
\leq C \| \delta_\tau\Pi_h \bm{u}^{n+1} - \delta_\tau Q_h \bm{u}^{n+1} \| \, \|\delta_\tau \bm{\chi}^{n+1}\|\nonumber  \\ 
&  \leq \epsilon   \|\delta_\tau \bm{e}_h^{n+1}\|^2 + C\left(\frac{1}{\epsilon}+1 \right) \tau^{-1} h^{2k +2} \int_{t^{n}}^{t^{n+1}} \vert \partial_t \bm{u}\vert_{H^{k +1}(\Omega)}^2. 
\end{align}
To handle $\bar{R}_t(\delta_\tau \bm{U}_h^{n+1})$, introduce the function $\bm{G}^n = \tau (\partial_t \bm{u})^{n} - (\bm{u}^n - \bm{u}^{n-1})$ for $n \geq 1$. Clearly, $\bm{G}^n$ belongs to $\bm{X}$.
We then write: 
\begin{align*}
&\bar{R}_{t}(\delta_\tau  \bm{U}_h^{n+1}) = \frac{1}{\tau}(\bm{G}^{n+1}, \delta_\tau \bm{U}_h^{n+1})-\frac{1}{\tau} (\bm{G}^n, \delta_\tau \bm{U}_h^{n}) - \frac{1}{\tau} (\bm{G}^{n}, \delta_\tau \bm{U}_h^{n+1} - \delta_\tau \bm{U}_h^n).
\end{align*}
We use Cauchy-Schwarz's inequality, \eqref{eq:discreter_poincare}, and Taylor expansions.
\begin{align}
&|(\bm{G}^n,  \delta_\tau \bm{U}_h^{n+1} - \delta_\tau \bm{U}_h^n) | \leq C \| \bm{G}^n\|\|\delta_\tau \bm{U}_h^{n+1} - \delta_\tau \bm{U}_h^n\|_{\DG} \nonumber \\ &  \leq  C \tau^2 \int_{t^{n-1}}^{t^{n}} \|\partial_{tt} \bm{u}\|^2 +  \frac{1}{8}\tau \|  \delta_\tau \bm{U}_h^{n+1} - \delta_\tau \bm{U}_h^n\|_{\DG}^2.\label{eq:bounding_hat_R_1} \end{align}
With the above bounds and the coercivity of $a_\mathcal{D}$ \eqref{eq:coercivity_a_ellip}, \eqref{eq:first_err_eq_delta} reads: 
\begin{align}
&\frac{1}{2}( a_\mathcal{D}(\delta_\tau \bm{U}_h^{n+1},\delta_\tau \bm{U}_h^{n+1})  - a_\mathcal{D}(\delta_\tau \bm{U}_h^{n},\delta_\tau \bm{U}_h^{n}) ) +\frac{1}{8} \| \delta_\tau \bm{U}_h^{n+1}- \delta_\tau \bm{U}_h^{n}\|_{\DG}^2\nonumber \\ &  + (1-5\epsilon)\tau \mu \| \delta_\tau \bm{e}_h^{n+1} \|^2 \leq C \mu \left(\frac{1}{\epsilon} +1\right) \tau^3 | \delta_\tau \phi_h^{n+1}|_{\DG}^2  +  C\tau \int_{t^{n-1}}^{t^{n}} \| \partial_{tt}\bm{u} \|^2 \nonumber \\ & +C \left(\frac{\mu}{\epsilon} +\mu + 1 \right) h^{2k+2} \int_{t^{n}}^{t^{n+1}} |\partial_t \bm{u}|^2_{H^{k+1}(\Omega)} +C\left(\frac{1}{\epsilon\mu}+1 \right)   h^4   \int_{t^{n}}^{t^{n+1}} \vert \partial_t p \vert_{H^1(\Omega)}^2 \nonumber \\ &  +  \mathcal{N}_1(\delta_\tau \bm{U}_h^{n+1}) + \mathcal{N}_2(\delta_\tau \bm{U}_h^{n+1})  + \frac{1}{\tau}(\bm{G}^{n+1}, \delta_\tau \bm{U}_h^{n+1})-\frac{1}{\tau} (\bm{G}^n, \delta_\tau \bm{U}_h^{n}). \label{eq:error_delta_semi_final}
\end{align}
It remains to handle the nonlinear terms. To this end,  we use \eqref{eq:expanding_N1}-\eqref{eq:expanding_N2} and write:
\begin{equation}
    \mathcal{N}_1(\delta_\tau \bm{U}_h^{n+1}) + \mathcal{N}_2(\delta_\tau \bm{U}_h^{n+1}) = \sum_{i=1}^3 (\xi_i^n(\delta_\tau \bm{U}_h^n) + \vartheta_i^n(\delta_\tau \bm{U}_h^n)) = \sum_{i=1}^3 \beta_i. 
\end{equation}
We begin with bounding $\beta_1^n$, we have 
\begin{align}
\beta_1^n & = \mathcal{C}(\bm{u}^{n+1} - \bm{u}^n ,\bm{u}^{n+1} - \bm{u}^n , \delta_\tau \bm{U}_h^{n+1}) 
+ \mathcal{C}(\bm{u}^{n+1} -2\bm{u}^n +  \bm{u}^{n-1} ,\bm{u}^{n}, \delta_\tau \bm{U}_h^{n+1}) \nonumber \\ 
& = 
\sum_{E \in \mesh_h} \int_{E } ((\bm{u}^{n+1} - \bm{u}^n) \cdot \nabla(\bm{u}^{n+1} - \bm{u}^n) + (\bm{u}^{n+1} - 2\bm{u}^n + \bm{u}^{n-1}) \cdot \nabla \bm{u}^n ) \cdot \delta_\tau \bm{U}_h^{n+1} \nonumber \\ 
& \leq (\|\bm{u}^{n+1} - \bm{u}^n \|_{L^3(\Omega)} \|\nabla(\bm{u}^{n+1} - \bm{u}^{n}) \| \nonumber  \\ & \quad + \| \bm{u}^{n+1} - 2\bm{u}^n + \bm{u}^{n-1}\| |\bm{u}^n|_{W^{1,3}(\Omega)} )\|\delta_\tau \bm{U}_h^{n+1} \|_{L^6(\Omega)}. \nonumber 
\end{align} 
 Since $H^1(\Omega)$ is embedded into $L^3(\Omega)$, 
 with the above bound, \eqref{eq:discreter_poincare}, the assumption that $\bm{u} \in L^{\infty}(0,T; H^{k+1}(\Omega)^d)$, Taylor expansions and Young's inequality, we have 
\begin{align}
|\beta_1^n| &\leq C \tau \| \delta_\tau \bm{U}_h^{n+1} \|^2_{\DG} + C \tau^{-1} \vert \bm{u}^{n+1} - \bm{u}^n \vert_{H^1(\Omega)}^4 + C\tau^{-1 }\| \bm{u}^{n+1} - 2\bm{u}^n + \bm{u}^{n-1}\|^2 \nonumber \\ 
& \leq C \tau \| \delta_\tau \bm{U}_h^{n+1} \|^2_{\DG} + C\tau \int_{t^{n}}^{t^{n+1}} \vert \partial_t \bm{u} \vert_{H^1(\Omega)}^2 \int_0^T \vert \partial_t \bm{u} \vert_{H^1(\Omega)}^2 + C \tau^2 \int_{t^{n-1}}^{t^{n+1}} \| \partial_{tt} \bm{u} \|^2 .\nonumber
\end{align}
Taking the sum of the above inequality from $n =1$ to $n =m $ yields: \begin{equation}
\sum_{n=1}^m|\beta_1^n| \leq C \tau \sum_{n=1}^m \| \delta_\tau \bm{U}^{n+1}\|_{\DG}^2 + C\tau.  \label{eq:bound_beta1}
\end{equation}
We write $\beta_2^n$ as such: 
\begin{align}
    \beta^n_2 &= \mathcal{C}(\bm{u}^{n} - \bm{u}_h^n ,\bm{u}^{n+1} - \bm{u}^n , \delta_\tau \bm{U}_h^{n+1}) + \mathcal{C}(\bm{u}^n - \bm{u}_h^n - (\bm{u}^{n-1} - \bm{u}_h^{n-1}) ,\bm{u}^{n}, \delta_\tau \bm{U}_h^{n+1}) \nonumber \\ 
& = \tau \mathcal{C}(\bm{u}^n-\Pi_h \bm{u}^n, \delta_\tau \bm{u}^{n+1}, \delta_\tau \bm{U}_h^{n+1}) 
-\tau \mathcal{C}(\bm{e}_h^n, \delta_\tau \bm{u}^{n+1} , \delta_\tau \bm{U}_h^{n+1}) \nonumber \\&\quad 
+ \tau \mathcal{C}(\delta_\tau (\bm{u}^n - \Pi_h \bm{u}^n) ,\bm{u}^{n}, \delta_\tau \bm{U}_h^{n+1}) 
- \tau \mathcal{C}(\delta_\tau \bm{e}_h^n,\bm{u}^{n}, \delta_\tau \bm{U}_h^{n+1}).\nonumber
 \end{align}
 We apply \eqref{eq:third_estimate_nonlinear_term} to bound the first and third terms. We use  \eqref{eq:first_estimate_nonlinear_form} to bound the second and fourth terms. 
 \begin{align}
& |\beta^n_2| \leq C\tau(\| \bm{e}_h^n\| + h^{k+1}|\bm{u}^n|_{H^{k+1}(\Omega)}) \vertiii{\delta_\tau \bm{u}^{n+1}} \| \delta_\tau \bm{U}_h^{n+1}\|_{\DG} \nonumber \\ 
  &  +  C\tau(\|\delta_\tau \bm{e}_h^{n+1}\| + \| \delta_\tau \bm{e}_h^n - \delta_\tau \bm{e}_h^{n+1}\| + h^{k+1}|\delta_\tau \bm{u}^n|_{H^{k+1}(\Omega)}) \vertiii{\bm{u}^{n}}\|\delta_{\tau} \bm{U}_h^{n+1}\|_{\DG}. \nonumber
 \end{align}
With Young's inequality, \eqref{eq:controlling_triplenorm_delta} and  the assumption that $\bm{u} \in L^{\infty}(0,T;H^{k+1}(\Omega)^d)$:
\begin{align}
&|\beta^n_2| \leq \epsilon \tau \mu \|\delta_\tau \bm{e}_h^{n+1}\|^2 + C\left(1+\frac{1}{\epsilon\mu}\right)\tau \|\delta_\tau \bm{U}_h^{n+1} \|_{\DG}^2 
+   \tau \|\delta_\tau \bm{e}_h^n - \delta_\tau \bm{e}_h^{n+1}\|^2 \nonumber  \\ &  + C (h^{2k+2} + \|\bm{e}_h^{n}\|^2)\int_{t^n}^{t^{n+1}} \vert \partial_t \bm{u} \vert_{H^2(\Omega)}^2 
+ C  h^{2k+2} \int_{t^{n-1}}^{t^{n}} \vert \partial_t \bm{u} \vert_{H^{k+1}(\Omega)}^2. 
\end{align}
Taking the sum of $\beta_2^n$ and using the results of Lemma \ref{lemma:stability_time_derivative} and Lemma \ref{lemma:first_err_estimate}, we obtain (recall that the constant $C_\mu$ is a generic constant that depends on $e^\mu$)
\begin{align}
\sum_{n=1}^m |\beta_2^n| & \leq \epsilon\tau \mu \sum_{n=1}^m \| \delta_\tau \bm{e}_h^{n+1}\|^2 + C\left(1+\frac{1}{\epsilon\mu}\right)\tau \sum_{n=1}^m \| \delta_\tau \bm{U}_h^{n+1} \|^2_{\DG} \nonumber  \\ & \quad + C_\mu (1+\mu^{-1} + \mu)(\tau + h^{2k}) + C_{\gamma,\mu} \tau. \label{eq:bound_beta2} 
\end{align}
To handle $\beta^n_3$, 
recall that with \eqref{eq:splitting_technique}, we have: 
\begin{align*}
    & \beta^n_3  = a_\mathcal{C}(\bm{u}_h^{n-1}; \bm{u}_h^{n-1}, \bm{u}^{n+1} - \bm{v}_h^{n+1} - (\bm{u}^n - \bm{v}_h^n),\delta_\tau \bm{U}_h^{n+1}) + I^n_2 \\ &  + \mathcal{C}(\bm{u}_h^n - \bm{u}_h^{n-1}, \bm{u}^{n+1}- \bm{v}_h^{n+1}, \delta_\tau \bm{U}_h^{n+1}) - \mathcal{U}(\bm{u}_h^{n-1}; \bm{u}_h^n - \bm{u}_h^{n-1}, \bm{u}^{n+1}- \bm{v}_h^{n+1}, \delta_\tau \bm{U}_h^{n+1}) \\ 
& = I^n_1 + I^n_2 +I^n_3 + I^n_4,      
\end{align*}
where 
$$ 
I^n_2 = -\mathcal{U}(\bm{u}_h^n;\bm{u}_h^n, \bm{u}^{n+1}- \bm{v}_h^{n+1}, \delta_\tau \bm{U}_h^{n+1})  +  \mathcal{U}(\bm{u}_h^{n-1} ;\bm{u}_h^n, \bm{u}^{n+1}- \bm{v}_h^{n+1}, \delta_\tau \bm{U}_h^{n+1}). 
$$ 
We have the following bound on $I^n_2$, see Lemma \ref{lemma:bound_I1} in the Appendix.  
\begin{align*}
|I^n_2| &\leq C  \|\bm{u}_h^n - \bm{u}_h^{n-1}\|\| \delta_\tau \bm{U}_h^{n+1} \|_{L^{\infty}(\Omega)} \| \bm{u}^{n+1} - \bm{v}_h^{n+1}\|_{\DG}.
\end{align*}
The terms $I^n_3 + I^n_4$ are handled as follows. Note that $\bm{u}^{n+1} - \bm{v}_h^{n+1} = \bm{u}^{n+1} - \Pi_h \bm{u}^{n+1} - \errn$. Hence, we apply Lemma \ref{lemma:bd_eta_1_eta_2_semi} estimate \eqref{eq:appendix_lemma3_secondestimate} in the appendix: 
\begin{align}
 &  |I^n_3| + |I^n_4| \leq C \|\bm{u}_h^n - \bm{u}_h^{n-1} \| (\| \delta_\tau \bm{U}_h^{n+1}\|_{L^{\infty}(\Omega)} + \| \nabla_h \delta_\tau \bm{U}_h^{n+1} \|_{L^3(\Omega)})\|\bm{u}^{n+1} - \bm{v}_h^{n+1} \|_{\DG} \nonumber \\ &  + C\| \bm{u}^n_h- \bm{u}_h^{n-1}\|_{L^3(\Omega)}\left(\sum_{e\in\Gamma_h} \sigma h_e^{-1} \|[\delta_\tau \bm{U}_h^{n+1} - \delta_\tau \bm{U}^{n+1}] \|_{L^2(e)}^2 \right)^{1/2} \|\errn \|_{\DG}\nonumber \\ 
&  + C \| \bm{u}_h^{n} - \bm{u}_h^{n-1}\|\|\delta_\tau \bm{U}_h^{n+1}\|_{L^{\infty}(\Omega)} (h^{-1} \| \bm{u}^{n+1} - \Pi_h \bm{u}^{n+1}\| + \|\nabla_h(\bm{u}^{n+1} - \Pi_h \bm{u}^{n+1})\|).\nonumber
\end{align} 
Similar to Lemma \ref{lemma:regularity_bounds_dual_problem}, by the linearity of the PDE \eqref{eq:aux_pb_1}-\eqref{eq:aux_pb_3}, we have: 
\begin{align}
   \vertiii{\delta_\tau \bm{U}^{n+1}} +  \| \delta_\tau \bm{U}_h^{n+1}\|_{L^{\infty}(\Omega)}  &\leq C \| \delta_\tau \bm{\chi}^{n+1} \|,\label{eq:regularity_delta} \\ 
\| \delta_\tau \bm{U}_h^{n+1} -\delta_\tau \bm{U}^{n+1} \|_{\DG} &\leq C h \|\delta_\tau \bm{\chi}^{n+1} \| \label{eq:conv_delta_dual}. 
\end{align}
With the above  bounds, triangle inequality, inverse estimate \eqref{eq:inverse_estimate}, and \eqref{eq:approximation_prop_2}, we find 
\begin{align}
\| \nabla_h \delta_\tau&  \bm{U}_h^{n+1}\|_{L^3(\Omega)}  \leq  Ch^{-d/6}\| \nabla_h(\delta_\tau \bm{U}_h^{n+1} - \Pi_h (\delta_\tau \bm{U}^{n+1})) \| \nonumber \\ 
& + \|\nabla_h (\Pi_h (\delta_\tau \bm{U}^{n+1}))\|_{L^3(\Omega)} \leq C h^{1-d/6} \| \delta_\tau \bm{\chi}^{n+1}\| + C \|\nabla \delta_\tau \bm{U}^{n+1}\|_{L^3(\Omega)} \nonumber \\ 
& \quad  \leq C \| \delta_\tau \bm{\chi}^{n+1}\|.\label{eq:l3_bd_nabla_delta_Uh}
\end{align}
Hence, with \eqref{eq:inverse_estimate}, the triangle inequality and  \eqref{eq:approximation_prop_1}-\eqref{eq:approximation_prop_2}, we obtain 
\begin{align}
   \sum_{i=2}^4 |I^n_i| 
   &\leq C\tau ( \|\delta_\tau \bm{e}_h^n\| + \|\delta_\tau \Pi_h \bm{u}^n \|)\| \delta_\tau \bm{\chi}^{n+1}\| ( h^{k} |\bm{u}^{n+1}|_{H^{k+1}(\Omega)} + \|\errn\|_{\DG} ). \nonumber
\end{align}
With a Taylor's expansion,  \eqref{eq:approximation_prop_1},  \eqref{eq:cfl_cond_detla}, and the regularity assumptions we have    
\begin{align*} \| \delta_\tau \Pi_h \bm{u}^n \|^2 \leq  2 \|\delta_\tau (\Pi_h \bm{u}^n - \bm{u}^n)\|^2+ 2 \|\delta_\tau \bm{u}^n\|^2 
 \leq  Ch^{2k} \int_{t^{n-1}}^{t^n} \vert\partial_t \bm{u}\vert_{H^{k+1}(\Omega)}^2 \\ + 4\| \partial_t\bm{u} \|^2_{L^{\infty}(0,T;L^2(\Omega))} + \frac{4}{3}\tau \| \partial_{tt} \bm{u}\|^2_{L^2(0,T;L^2(\Omega))}  \leq C. 
\end{align*}
We use the following bound, which is easy to obtain from the definition of $\bm{\chi}$.
\begin{equation}
\Vert \delta_\tau \bm{\chi}^{n+1} \Vert \leq \Vert \delta_\tau\bm{e}_h^{n+1}\Vert + C h^{k+1} \vert \delta_\tau \bm{u}^{n+1}\vert_{H^{k+1}(\Omega)}.
\label{eq:bounddeltatauchi}
\end{equation}
Hence, with Young's inequality and the assumption that $\bm{u} \in L^{\infty}(0,T;H^{k+1}(\Omega)^d)$:
\begin{multline}
\sum_{i=2}^4 |I^n_i|  \leq  \epsilon \tau \mu \|\delta_\tau \bm{e}_h^{n+1} \|^2 + C \left(1 + \frac{1}{\epsilon\mu}\right)\tau( \|\delta_\tau \bm{e}_h^n\|^2 + 1)(\|\errn\|^2_{\DG} + h^{2k} ) \\ + C \tau h^{2k+2} |\delta_\tau \bm{u}^{n+1}|^2_{H^{k+1}(\Omega)}. 
\label{eq:boundI2I3I4}
\end{multline} 
It remains to handle $I^n_1$. It is helpful to define the function $\bm{E}^n \in \bm{X}$, $\bm{E}^n = \Pi_h \bm{u}^n - \bm{u}^n$ for $n \geq 1$.  
We have from \eqref{eq:stability_linf_pih}, \eqref{eq:approximation_prop_1}, \eqref{eq:approximation_prop_2}: 
\begin{align}\label{eq:boundEn}
 & \| \delta_\tau \bm{E}^{n+1} \|^2  + h^2 \|\delta_\tau \bm{E}^{n+1} \|^2_{\DG}  \leq C h^{2k + 2} \vert \delta_\tau \bm{u}^{n+1} \vert^2_{H^{k+1}(\Omega)}, \\
& \| \delta_\tau \bm{E}^{n+1} \|_{L^\infty(\Omega)}^2  \leq C  \vertiii{ \delta_\tau \bm{u}^{n+1}}^2 \leq C \| \delta_\tau \bm{u}^{n+1} \|_{H^2(\Omega)}^2. 
\label{eq:boundEngrad}
\end{align}
With the definition of $\bm{E}^n$ and \eqref{eq:integration_by_parts_c}, 
\begin{align*}
    I^n_1  & = \tau \bar{a}_\mathcal{C}(\bm{u}_h^{n-1}; \bm{u}_h^{n-1}, \delta_\tau \bm{U}_h^{n+1}, \delta_\tau \bm{E}^{n+1} + \delta_\tau \errn)
    \\ &= \tau  \bar{a}_\mathcal{C}(\bm{u}_h^{n-1}; \bm{u}_h^{n-1}, \delta_\tau \bm{U}_h^{n+1} - \delta_\tau \bm{U}^{n+1}, \delta_\tau \bm{E}^{n+1} +  \delta_\tau \errn) \nonumber \\ & \quad  +  \tau \bar{a}_\mathcal{C}(\bm{u}_h^{n-1}; \bm{u}_h^{n-1} - \bm{u}^{n-1}, \delta_\tau \bm{U}^{n+1}, \delta_\tau \bm{E}^{n+1} + \delta_\tau \errn)\nonumber \\ & \quad   +  \tau \bar{a}_\mathcal{C}(\bm{u}_h^{n-1}; \bm{u}^{n-1}, \delta_\tau \bm{U}^{n+1}, \delta_\tau \bm{E}^{n+1} +  \delta_\tau \errn) = \tau I^n_{1,1} +\tau I^n_{1,2} + \tau  I^n_{1,3}. 
\end{align*}
To handle $I^n_{1,1}$, we closely follow the derivation of the bound on $A_1$, see the derivation of bound \eqref{eq:bd_A_1} in the proof of Theorem \ref{theorem:improved_estimate_velocity}. We provide the details in the Appendix: Lemma \ref{lemma:bound_on_I11}. 
%
\begin{align}
& |I^n_{1,1}|  \leq \epsilon  \mu  \|\delta_\tau \bm{e}_h^{n+1} \|^2 
\nonumber \\ 
&+C\left(1+\frac{1}{\epsilon\mu}\right) (\|\bm{e}_h^{n-1} \|^2+h^2)( h^{2k} \vert \delta_\tau \bm{u}^{n+1}\vert_{H^{k+1}(\Omega)}^2  + \| \delta_\tau \errn \|^2_{\DG}) \nonumber \\ 
& + C\left(\frac{1}{\epsilon\mu} +1 \right)h^2 \| \delta_\tau \bm{e}_h^{n+1}\|^2  
+  C \left(1+\frac{1}{\epsilon\mu}\right)\|\delta_\tau \bm{U}_h^{n+1} \|^2_{\DG} \nonumber \\ 
&
+ C\tau^2 \left(1+\frac{h^2}{\epsilon\mu}\right) \vert \delta_\tau \phi_h^{n+1} \vert^2_{\DG} \nonumber \\ 
& + C \left(1+ \frac{1}{\epsilon \mu}\right) \left( h^2  \left(1+\frac{1}{\mu}\right)  + \| \bm{e}_h^{n-1}\|^2\right) \| \delta_\tau \bm{u}^{n+1}\|_{H^2(\Omega)}^2.
\label{eq:boundI11} 
\end{align}
For $I^n_{1,2}$, we have: 
\begin{align*}
I^n_{1,2} =  \sum_{E \in \mesh_h} \int_E ((\bm{u}_h^{n-1} - \bm{u}^{n-1})\cdot \nabla \delta_\tau \bm{U}^{n+1}) \cdot(\delta_\tau \bm{E}^{n+1}+ \delta_\tau \errn) \\  +  \frac{1}{2}b(\bm{e}_h^{n-1} + (\Pi_h \bm{u}^{n-1} - \bm{u}^{n-1}), \delta_\tau \bm{U}^{n+1}\cdot (\delta_\tau \bm{E}^{n+1}
+ \delta_\tau \errn)).
\end{align*}
With \eqref{eq:regularity_delta},  \eqref{eq:boundEn}, Holder's inequality, triangle inequality, approximation property \eqref{eq:approximation_prop_1},  and \eqref{eq:discreter_poincare}, we bound the first term by: 
\begin{multline}
     \|\bm{u}_h^{n-1} - \bm{u}^{n-1}\| \|\nabla \delta_{\tau} \bm{U}^{n+1}\|_{L^3(\Omega)} \|\delta_\tau \bm{E}^{n+1}+ \delta_\tau \errn\|_{L^6(\Omega)} \\ \leq C (\|\bm{e}_h^{n-1}\| + h^2| \bm{u}^{n-1}|_{H^2(\Omega)}) \| \delta_\tau \bm{\chi}^{n+1}\|\|\delta_\tau \bm{E}^{n+1}+ \delta_\tau \errn\|_{\DG}
\\
\leq C(\|\bm{e}_h^{n-1}\| + h^2| \bm{u}^{n-1}|_{H^2(\Omega)}) \| \delta_\tau \bm{\chi}^{n+1}\|
( h^k \vert \delta_\tau \bm{u}^{n+1} \vert_{H^{k+1}(\Omega)} + \Vert \delta_\tau \errn\|_{\DG}). \nonumber 
\end{multline} 
To bound the second term in $I_{1,2}^n$, we refer to the derivation of bounds \eqref{eq:bounding_b_eh_pih_u} and \eqref{eq:first_estimate_nonlinear_form}.  We use \eqref{eq:regularity_delta},  \eqref{eq:boundEn} and  obtain:  
\begin{multline*}
|b(\bm{e}_h^{n-1}, \delta_\tau \bm{U}^{n+1}\cdot (\delta_\tau \bm{E}^{n+1}+\delta_\tau \errn))| 
 \\ \leq C\|\bm{e}_h^{n-1}\| \| \delta_\tau \bm{\chi}^{n+1}\| (h^{k}|\delta_\tau \bm{u}^{n+1}|_{H^{k+1}(\Omega)} + \|\delta_\tau \errn\|_{\DG}).
\end{multline*} 
Bounds \eqref{eq:third_estimate_nonlinear_term} and \eqref{eq:regularity_delta} yield: 
\begin{equation}
 |b(\Pi_h \bm{u}^{n-1} - \bm{u}^{n-1}, \delta_\tau \bm{U}^{n+1} \cdot \delta_\tau \errn) | \leq Ch^{k+1}|\bm{u}^{n-1}|_{H^{k+1}(\Omega)}\| \delta_\tau \bm{\chi}^{n+1}\|\| \delta_\tau \errn \|_{\DG}.\nonumber
\end{equation}
Approximation properties, trace estimates \eqref{eq:trace_ineq_continuous}-\eqref{eq:trace_ineq_discrete},  and \eqref{eq:regularity_delta} yield:  
\begin{align*}
& |b(\Pi_h \bm{u}^{n-1} - \bm{u}^{n-1},  \delta_\tau \bm{U}^{n+1} \cdot \delta_\tau \bm{E}^{n+1} )| \\ &  
\leq C  \| \delta_\tau \bm{U}^{n+1}\|_{L^{\infty}(\Omega)}( h^{k} \vert \bm{u}^{n-1}\vert_{H^{k+1}(\Omega)}   \| \delta_\tau \bm{E}^{n+1}\| \nonumber  +  h^{k+2}|\bm{u}^{n-1}|_{H^{k+1}(\Omega)} |\delta_\tau \bm{u}
^{n+1} |_{H^2(\Omega)} )\nonumber \\ &  
   \leq C  h^{k+2} |\delta_\tau \bm{u}^{n+1}|_{H^{2}(\Omega)}| \bm{u}^{n-1}|_{H^{k+1}(\Omega)}\|\delta_\tau \bm{\chi}^{n+1}\|. \nonumber
 \end{align*}
 Hence, Young's inequality and the assumption that $\bm{u} \in L^{\infty}(0,T;H^{k+1}(\Omega)^d )$ yield  
 \begin{align}
&|I^n_{1,2}| \leq \epsilon  \mu \| \delta_\tau \bm{e}_h^{n+1} \|^2 \nonumber\\ & + C\left(\frac{1}{\epsilon\mu} +1 \right)(\| \bm{e}_h^{n-1}\|^2 + h^4)
( h^{2k} \vert \delta_\tau \bm{u}^{n+1}\vert_{H^{k+1}(\Omega)}^2 + \Vert  \delta_\tau \errn \|_{\DG}^2)
\nonumber\\ 
 & +   C h^{2k+2} |\delta_\tau \bm{u}^{n+1} |_{H^{k+1}(\Omega)}^2.\label{eq:boundI12}
 \end{align}
To handle $I^n_{1,3}$, we have: 
\begin{align}
    I^n_{1,3} & =  \sum_{E\in\mesh_h} \int_E (\bm{u}^{n-1} \cdot \nabla ((\delta_\tau \bm{U}^{n+1} - \delta_\tau \bm{U}_h^{n+1}) +\delta_\tau \bm{U}_h^{n+1} )\cdot (\delta_\tau \bm{E}^{n+1} + \delta_\tau \errn ). \nonumber 
\end{align}
Note that by the triangle inequality and \eqref{eq:update_velocity_delta}, we have: 
\begin{align}
\|\delta_\tau \errn\| \leq \| \delta_\tau \errn - \delta_\tau \bm{e}_h^{n+1}\| + \| \delta_\tau \bm{e}_h^{n+1}\| \leq C\tau |\delta_{\tau}\phi_h^{n+1}|_{\DG} + \| \delta_\tau \bm{e}_h^{n+1}\|. \label{eq:bounddeltataueh} 
\end{align} 
Hence, with the help of \eqref{eq:conv_delta_dual},  \eqref{eq:boundEn} and Holder's inequality, we obtain
\begin{align}
 & |I^n_{1,3} | \leq  C \| \bm{u}^{n-1}\|_{L^{\infty}(\Omega)}(h\|\delta_\tau \bm{\chi}^{n+1} \|+ \| \delta_\tau \bm{U}_h^{n+1}\|_{\DG}) \nonumber \\ & \times (\|\delta_\tau \bm{E}^{n+1}\| + \| \delta_\tau \bm{e}_h^{n+1}\|  +  \tau | \delta_\tau \phi_h^{n+1}|_{\DG}) \nonumber 
\\ & \leq \epsilon  \mu \|\delta_\tau \bm{e}_h^{n+1} \|^2+ C\left( 1+\frac{h^{2}}{\epsilon \mu} \right) (h^{2k+2}|\delta_\tau \bm{u}^{n+1}|_{H^{k+1}(\Omega)}^2+ \tau^2 |\delta_\tau \phi_h^{n+1}|_{\DG}^2) \nonumber \\ & \quad  +\frac{C}{\epsilon \mu} h^2 \|\delta_\tau \bm{e}_h^{n+1}\|^2  + C\left(1+\frac{1}{\epsilon \mu}\right) \|\delta_\tau \bm{U}_h^{n+1}\|^2_{\DG}.  \label{eq:boundI13} 
\end{align}
With \eqref{eq:boundI2I3I4}, \eqref{eq:boundI11}, \eqref{eq:boundI12}, and \eqref{eq:boundI13}, 
we obtain a bound on $\beta_3^n$. 
 \begin{align}
 & |\beta_3^n|  \leq 4\epsilon  \mu  \|\delta_\tau \bm{e}_h^{n+1} \|^2 \nonumber \\ 
 & 
 +C\left(1+\frac{1}{\epsilon\mu}\right) \tau  (\|\bm{e}_h^{n-1} \|^2+h^2)( h^{2k} \vert \delta_\tau \bm{u}^{n+1}\vert_{H^{k+1}(\Omega)}^2  + \| \delta_\tau \errn \|^2_{\DG}) \nonumber \\ 
 & + C\left(1 + \frac{1}{\epsilon\mu} \right) \tau h^2 \| \delta_\tau \bm{e}_h^{n+1}\|^2  
 +  C \left(1+\frac{1}{\epsilon\mu}\right) \tau \|\delta_\tau \bm{U}_h^{n+1} \|^2_{\DG} \nonumber \\ 
 &  + C \left(1+\frac{h^2}{\epsilon\mu}\right) \tau^3 \vert \delta_\tau \phi_h^{n+1} \vert^2_{\DG}+ C \left( 1+\frac{h^{2}}{\epsilon \mu} \right)\tau h^{2k+ 2}|\delta_\tau \bm{u}^{n+1}|_{H^{k+1}(\Omega)}^2  \nonumber \\ 
& + C \left(1+ \frac{1}{\epsilon \mu}\right) \tau  \left( h^2  \left(1+\frac{1}{\mu}\right)  + \| \bm{e}_h^{n-1}\|^2\right) \| \delta_\tau \bm{u}^{n+1}\|_{H^2(\Omega)}^2 
\nonumber \\ 
& +C \left(1 + \frac{1}{\epsilon\mu}\right)\tau( \|\delta_\tau \bm{e}_h^n\|^2 + 1)(\|\errn\|^2_{\DG} + h^{2k} ). 
\end{align}
We take the sum of $\beta_3^n$, choose $\epsilon = 1/18$, and use the results of Lemma \ref{lemma:stability_time_derivative}, Lemma \ref{lemma:first_err_estimate}, and the condition on $\tau$ \eqref{eq:cfl_cond_detla}. We obtain
\begin{align} 
\sum_{n=1}^m |\beta_3^n| &\leq \frac{2}{9} \tau \mu \sum_{n=1}^{m} \|\delta_\tau \bm{e}_h^{n+1} \|^2 \nonumber  + C\left(1 + \frac{1}{\mu}\right) \tau \sum_{n=1}^{m} \| \delta_\tau \bm{U}_h^{n+1}\|^2_{\DG} \\ & \quad  +(C_\mu K_\mu + C_{\gamma,\mu}K_\mu (1+ C_\mu) )(\tau + h^{2k}) .\label{eq:bound_beta_3}
\end{align}
Further, observe that the sum of the last two terms in \eqref{eq:error_delta_semi_final} yields  
\begin{align}
& \frac{1}{\tau}(\bm{G}^{m+1}, \delta_\tau \bm{U}_h^{m+1})-\frac{1}{\tau} (\bm{G}^1, \delta_\tau \bm{U}_h^{1})  \nonumber \\ &\leq 
 \frac{1}{8}   ( \| \delta_\tau \bm{U}_h^{m+1} \|^2_{\DG}+ \| \delta_\tau \bm{U}_h^{1} \|_{\DG}^2)  
+ C\tau \int_{t^m}^{t^{m+1}} \|\partial_{tt} \bm{u}\|^2 + C\tau \int_{t^0}^{t^{1}} \|\partial_{tt} \bm{u}\|^2.
\label{eq:boundGmG1}
\end{align}
We sum \eqref{eq:error_delta_semi_final} from $n = 1$ to $n=m$  and use Lemma~\ref{lemma:stability_time_derivative}, Lemma~\ref{lemma:first_err_estimate}, the bounds 
\eqref{eq:bound_beta1}, \eqref{eq:bound_beta2}, \eqref{eq:bound_beta_3}, \eqref{eq:boundGmG1}, the coercivity and continuity of $a_\mathcal{D}$, 
the condition \eqref{eq:cfl_cond_detla}.   We obtain
\begin{align}
 \frac{1}{8}  \|\delta_\tau \bm{U}_h^{m+1}\|_{\DG}^2 
+\frac{\mu}{2} \tau \sum_{n=1}^{m}\| \delta_\tau \bm{e}_h^{n+1}  \|^2  \leq  (C_\mu K_\mu + C_{\gamma,\mu}K_\mu (1+ C_\mu) )(\tau + h^{2k}) \nonumber \\ +C \|\delta_\tau \bm{U}_h^1 \|_{\DG}^2 +  C\left(1+\frac{1}{\mu}\right) \tau \sum_{n=1}^{m} \|\delta_\tau \bm{U}_h^{n+1}\|^2_{\DG}.  \nonumber 
\end{align}
To handle $\| \delta_\tau \bm{U}_h^1\|_{\DG}^2$,
we take $n=1$ in \eqref{eq:first_error_eq_dual} and use the same expressions and bounds as in the proof of Theorem \ref{theorem:improved_estimate_velocity} except for  bound  
\eqref{eq:bounding_time_err_dual} and the bound on $T_{2,1}^n$ \eqref{eq:bd_T_21}. Instead, we use the following alternative bounds which result from a different application of Young's inequality:
\begin{align*}
 |T_{2,1}^1| &\leq C \tau^2 \int_{t^{0}}^{t^{1}} \|\partial_t \bm{u}\|^2 + \frac{1}{16} \tau^{-1} \|\bm{U}_h^1\|^2_{\DG}, \\
 |(\tau (\partial_t \bm{u})^1 - (\bm{u}^1 - \bm{u}^{0}),  \bm{U}_h^1)| & \leq C\tau^3\int_{t^{0}}^{t^1}  \| \partial_{tt}  \bm{u}\|^2 +  \frac{1}{16}\|\bm{U}_h^1\|_{\DG}^2. 
\end{align*}
We use \eqref{eq:cfl_cond_detla}, \eqref{eq:initial_error_eh} and its derivation, and the observation that $\bm{U}_h^0 = \bm{0}$, see \eqref{eq:UH0_ZERO}. With assuming that $C\tau(1+1/\mu) \leq 1/16 $, we have: 
 \begin{align*}
 &\frac{1}{16}\|\bm{U}_h^1\|^2_{\DG}\\  &\leq C\left(\frac{1}{\mu}+1+\mu\right)\tau\left(h^{2k+2} + \tau^2|\phi_h^1|^2_{\DG} + \|\bm{e}_h^1 - \bm{e}_h^0 \|^2 +h^2 ( \|\tilde{\bm{e}}_h^1\|^2_{\DG} + \|\bm{e}_h^1\|^2 ) \right)  \\ & \quad + C\left(\frac{1}{\mu}+1\right)\tau \|\bm{U}_h^1\|_{\DG}^2 \leq  C\left(\frac{1}{\mu}+1+\mu\right)^3 \tau^3. 
 \end{align*}
 Hence $\|\delta_\tau \bm{U}_h^1\|_{\DG}^2 \leq CK_\mu \tau$. With $\tau$ small enough, say $C\tau(1+1/\mu)<1$, the final result is obtained by Gronwall's and triangle inequalities. 
\end{proof} 

    \section{Error estimate for the pressure }\label{sec:pressure_estimate}
With the results of the previous section, we show the following a priori estimate for the pressure, optimal in space and sub-optimal in time.
\begin{theorem} \label{theorem:pressure_error}
Under the same assumptions of Lemma  \ref{lemma:error_delta_velocity}, we have the following error estimate. 
There exists a constant $\hat{C}_{\gamma,\mu}$ independent of $h$ and $\tau$ but inversely dependent on $\gamma$ and $\mu$ such that, for $1 \leq m \leq N_T$:  
\begin{equation}
    \tau \sum_{n=1}^{m} \| p^n - p_h^n\|^2 \leq \hat{C}_{\gamma,\mu} (\tau + h^{2k}). 
\end{equation}  
Here, $\hat{C}_{\gamma,\mu} = C \mu^{-1} \tilde{C}_{\gamma,\mu} + C_\mu K_\mu$ where $C_\mu$ depends on $e^{\frac{1}{\mu}}$.
\end{theorem}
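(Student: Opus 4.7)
The overall plan is to combine the discrete inf-sup condition for the pair $(\bm X_h,M_{h0})$ with a rearranged form of the velocity error equation \eqref{eq:first_err_eq}, in the spirit of the pressure analysis in \cite{nochetto2005gauge}. First observe that iterating the pressure update \eqref{eq:update_pressure_1} from $p_h^0=0$, together with $\phi_h^n\in M_{h0}$ and the identity $b(\bm v_h^n,1)=0$ (immediate from \eqref{eq:equiv_form_b}), yields $p_h^n\in M_{h0}$ for every $n\geq 0$. Since $\pi_h p^n\in M_{h0}$ as well, the DG inf-sup condition produces a constant $\beta>0$ with
\[
\beta\,\|\pi_h p^n-p_h^n\| \leq \sup_{\bm\theta_h\in\bm X_h\setminus\{\bm 0\}}\frac{b(\bm\theta_h,\pi_h p^n-p_h^n)}{\|\bm\theta_h\|_{\DG}}.
\]
Combined with \eqref{eq:l2_proj_approximation} and the standard bound $|b(\bm\theta_h,p^n-\pi_h p^n)|\leq C h^k|p^n|_{H^k}\|\bm\theta_h\|_{\DG}$, it therefore suffices to control $b(\bm\theta_h,p^n-p_h^n)$ uniformly in $\bm\theta_h$.

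To obtain this control, I will substitute $(\err,\bm\theta_h)=(\bm e_h^n,\bm\theta_h)-\tau b(\bm\theta_h,\phi_h^n)$ (from \eqref{eq:sec_err_eq}) into \eqref{eq:first_err_eq}, and use \eqref{eq:update_pressure_1} in the pointwise form $p_h^{n-1}+\phi_h^n=p_h^n+\delta\mu\,\tilde S_h^n$ with $\tilde S_h^n:=\pi_h(\nabla_h\cdot\bm v_h^n-R_h([\bm v_h^n]))$. After rearrangement,
\begin{align*}
\tau\, b(\bm\theta_h,p^n-p_h^n) &= -(\bm e_h^n-\bm e_h^{n-1},\bm\theta_h)-\tau\mu\,a_\mathcal{D}(\err,\bm\theta_h)-\tau\mu\,a_\mathcal{D}(\Pi_h\bm u^n-\bm u^n,\bm\theta_h)\\
&\quad -\tau\bigl[a_\mathcal{C}(\bm u_h^{n-1};\bm u_h^{n-1},\bm v_h^n,\bm\theta_h)-a_\mathcal{C}(\bm u^n;\bm u^n,\bm u^n,\bm\theta_h)\bigr]\\
&\quad + R_t(\bm\theta_h)+\tau\delta\mu\,b(\bm\theta_h,\tilde S_h^n).
\end{align*}
Because $\Pi_h$ preserves $b(\cdot,q_h)$ (by \eqref{eq:def_pi_h}) and $\bm u^n$ is divergence-free with vanishing jumps, the identity $\tilde S_h^n=\pi_h(\nabla_h\cdot\err-R_h([\err]))$ holds, so that $|b(\bm\theta_h,\tilde S_h^n)|\leq C\|\bm\theta_h\|_{\DG}\|\err\|_{\DG}$ via \eqref{eq:def_b_lift} and \eqref{eq:lift_prop_r}.

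Dividing the above identity by $\tau$, each term is bounded by a constant times $\|\bm\theta_h\|_{\DG}$, and its squared $\tau\sum_{n=1}^m$ sum receives contributions as follows. The difference quotient $(\bm e_h^n-\bm e_h^{n-1})/\tau=\delta_\tau\bm e_h^n$ produces $\|\delta_\tau\bm e_h^n\|$, whose $\ell^2(L^2)$ sum is controlled by Lemma~\ref{lemma:error_delta_velocity} by $(2/\mu)\tilde C_{\gamma,\mu}(\tau+h^{2k})$. The viscous and approximation terms yield $\mu\|\err\|_{\DG}$ and $\mu h^k|\bm u^n|_{H^{k+1}}$, controlled by Lemma~\ref{lemma:first_err_estimate} and \eqref{eq:approximation_prop_2} with contribution of order $C_\mu(1+\mu+\mu^{-1})(\tau+h^{2k})$. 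The $\tilde S_h^n$-term similarly contributes $\mu\|\err\|_{\DG}$. Finally, $R_t(\bm\theta_h)/\tau=((\partial_t\bm u)^n-\Pi_h\delta_\tau\bm u^n,\bm\theta_h)$ splits via a Taylor expansion and the approximation \eqref{eq:approximation_prop_1} into pieces of order $\tau^{1/2}(\int_{t^{n-1}}^{t^n}\|\partial_{tt}\bm u\|^2)^{1/2}$ and $\tau^{-1/2}h^{k+1}(\int_{t^{n-1}}^{t^n}|\partial_t\bm u|^2_{H^{k+1}})^{1/2}$, yielding $O(\tau^2+h^{2k+2})$ after $\tau\sum(\cdot)^2$.

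The main obstacle is the nonlinear difference $a_\mathcal{C}(\bm u_h^{n-1};\bm u_h^{n-1},\bm v_h^n,\bm\theta_h)-a_\mathcal{C}(\bm u^n;\bm u^n,\bm u^n,\bm\theta_h)$: because $\bm\theta_h$ is now an arbitrary element of $\bm X_h$, the antisymmetric cancellations exploited in Theorem~\ref{theorem:improved_estimate_velocity} are unavailable. The plan is to apply the splitting identity \eqref{eq:splitting_technique} with $\bm u=\bm u^n$, $\bm w=\bm u_h^{n-1}$, $\bm v_1=\bm u^n$, $\bm v_2=\bm v_h^n$, and to bound the four resulting pieces via Lemmas~\ref{prep:bounds_nonlinear_terms} and \ref{lemma:bounds_nonlinear_terms_2}, using the decompositions $\bm u^n-\bm v_h^n=-\err+(\bm u^n-\Pi_h\bm u^n)$ and $\bm u^n-\bm u_h^{n-1}=(\bm u^n-\bm u^{n-1})+(\bm u^{n-1}-\Pi_h\bm u^{n-1})-\bm e_h^{n-1}$. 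Under the regularity $\bm u\in L^\infty(0,T;H^{k+1}(\Omega)^d)$ with $k\geq 1$, Sobolev embedding provides the $L^\infty\cap W^{1,3}$ control needed for $\vertiii{\bm u^n}$, while Lemma~\ref{lemma:first_err_estimate} and Theorem~\ref{theorem:improved_estimate_velocity} absorb the $\ell^2$ sums of $\|\err\|_{\DG}$ and $\|\bm e_h^{n-1}\|$. Collecting all contributions and applying the triangle inequality yields
\[
\tau\sum_{n=1}^m\|p^n-p_h^n\|^2 \leq \hat C_{\gamma,\mu}(\tau+h^{2k}),
\]
as claimed.
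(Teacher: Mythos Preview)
Your overall strategy---inf-sup for the pressure space combined with the rearranged velocity error equation---is precisely the paper's, and your treatment of the linear terms (the time-difference, the diffusion, the $\tilde S_h^n$ contribution, and $R_t$) is correct. The gap is in the convection difference. After applying \eqref{eq:splitting_technique} with your choices, the two upwind pieces carrying $\bm u^n$ in the third slot vanish since $[\bm u^n]=\bm 0$, and $\mathcal{C}(\bm u^n-\bm u_h^{n-1},\bm u^n,\bm\theta_h)$ is indeed handled by Lemmas~\ref{prep:bounds_nonlinear_terms}--\ref{lemma:bounds_nonlinear_terms_2} via your decomposition of $\bm u^n-\bm u_h^{n-1}$. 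What remains is $a_\mathcal{C}(\bm u_h^{n-1};\bm u_h^{n-1},\bm u^n-\bm v_h^n,\bm\theta_h)$, and after writing $\bm u^n-\bm v_h^n=(\bm u^n-\Pi_h\bm u^n)-\err$ you are left with
\[
a_\mathcal{C}(\bm u_h^{n-1};\bm u_h^{n-1},\err,\bm\theta_h),
\]
in which all three arguments are discrete. Neither Lemma~\ref{prep:bounds_nonlinear_terms} nor Lemma~\ref{lemma:bounds_nonlinear_terms_2} covers this case: both require the third slot to be a smooth function $\bm v$ or an interpolation error $\bm v-\Pi_h\bm v$. The paper instead invokes the trilinear bound of Lemma~\ref{lemma:bd_eta_1_eta_2_semi},
\[
|a_\mathcal{C}(\bm u_h^{n-1};\bm u_h^{n-1},\err,\bm\theta_h)|\leq C\,\|\bm u_h^{n-1}\|_{L^3(\Omega)}\,\|\err\|_{\DG}\,\|\bm\theta_h\|_{\DG},
\]
and must then show $\|\bm u_h^{n-1}\|_{L^3(\Omega)}\leq C$ uniformly in $n$. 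That uniform bound comes from the pointwise estimate $\|\bm e_h^{n-1}\|^2\leq C_\mu(\tau+h^{2k})$ of Lemma~\ref{lemma:first_err_estimate}, the inverse inequality \eqref{eq:inverse_estimate}, and---crucially---the CFL condition \eqref{eq:cfl_cond_detla}, which together give $\|\bm e_h^{n-1}\|_{L^3}^2\leq Ch^{-d/3}\|\bm e_h^{n-1}\|^2\leq C_\mu$. Your proposal never invokes \eqref{eq:cfl_cond_detla}; without it the product $\|\bm e_h^{n-1}\|_{L^3}\|\err\|_{\DG}$ cannot be absorbed, since Lemma~\ref{lemma:first_err_estimate} and Theorem~\ref{theorem:improved_estimate_velocity} control each factor only in $\ell^2$ in time, not pointwise, and an $\ell^2\times\ell^2$ product is not summable. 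This is in fact the one place in the pressure proof where the CFL hypothesis inherited from Lemma~\ref{lemma:error_delta_velocity} actually enters.
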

 \begin{proof}
Let $\tilde{\bm{X}}_h$ be a subspace of $\bm{X}_h$ satisfying 
\begin{equation}
\tilde{\bm{X}}_h = \{ \bm{\theta}_h \in \bm{X}_h:~ \forall e\in \Gamma_h \cup \partial \Omega,~ [\bm{\theta}_h ]\cdot \bm{n}_e = 0\}. 
\end{equation} 
There exists a  positive constant $\beta^{\ast}$,  independent of $h$, such that \cite{girault2005discontinuous,riviere2008discontinuous} 
\begin{equation}\label{eq:inf_sup_cond}
\inf_{q_h\in M_h} \sup_{\bm{v}_h \in \tilde{\bm{X}}_h} \frac{-b(\bm{v}_h,q_h)}{\|\bm{v}_h\|_{\DG} \| q_h\|} \geq \beta^{\ast}. 
\end{equation} 
As a result of this inf-sup condition, for $n\geq 0$, there exists a unique $\bm{\zeta}^n_h \in \tilde{\bm{X}}_h$ with
\begin{equation}
b(\bm{\zeta}^n_h, p_h^n - \pi_h p^n ) = \|p_h^n - \pi_h p^n\|^2, \quad \|\bm{\zeta}^n_h \|_{\DG} \leq \frac{1}{\beta^{\ast}} \|p_h^n - \pi_h p^n\|.  \label{eq:results_inf_sup}
\end{equation}
We substitute \eqref{eq:sec_err_eq} in \eqref{eq:first_err_eq}. For $\bm{\theta}_h \in \bm{X}_h$, 
\begin{multline}
(\bm{e}_h^{n} - \bm{e}_h^{n-1}, \bm{\theta}_h ) + \tau a_{\mathcal{C}}(\bm{u}_h^{n-1}; \bm{u}_h^{n-1}, \err , \bm{\theta}_h) + \tau R_{\mathcal{C}}(\bm{\theta}_h)\\ + \tau \mu a_\mathcal{D}(\err, \bm{\theta}_h)  =  \tau b(\bm{\theta}_h, p_h^{n-1} + \phi_h^n - p^n) - \tau \mu a_\mathcal{D}(\Pi_h \bm{u}^n - \bm{u}^n ,\bm{\theta}_h) +  R_t(\bm{\theta}_h). \nonumber
\end{multline}
Using \eqref{eq:error_update_pressure}, we obtain for all $\bm{\theta}_h \in \bm{X}_h$,  
\begin{align}
&(\bm{e}_h^{n} - \bm{e}_h^{n-1}, \bm{\theta}_h )  + \tau  a_{\mathcal{C}}(\bm{u}_h^{n-1};  \bm{u}_h^{n-1},\err, \bm{\theta}_h) + \tau R_{\mathcal{C}}(\bm{\theta}_h) + \tau \mu a_\mathcal{D}(\err, \bm{\theta}_h) \nonumber \\ & =  \tau b(\bm{\theta}_h, p_h^{n} - \pi_h p^n)   - \tau b(\bm{\theta}_h, p^n - \pi_h p^n) - \tau \mu a_\mathcal{D}(\Pi_h \bm{u}^n - \bm{u}^n ,\bm{\theta}_h)\nonumber \\ & \quad +\tau \delta \mu b(\bm{\theta}_h, \nabla_h \cdot \err - R_h([\err]))  +  R_t(\bm{\theta}_h).  \nonumber 
\end{align}
Choosing $\bm{\theta}_h = \bm{\zeta}^n_h$ and using \eqref{eq:results_inf_sup} result in 
\begin{align} 
&\tau \|p_h^n - \pi_h p^n \|^2 =
(\bm{e}_h^{n} - \bm{e}_h^{n-1}, \bm{\zeta}_h^n ) + \tau  a_{\mathcal{C}}(\bm{u}_h^{n-1}; \bm{u}_h^{n-1}, \err , \bm{\zeta}_h^n) + \tau R_{\mathcal{C}}(\bm{\zeta}_h^n) \nonumber\\ & + \tau \mu a_\mathcal{D}(\err, \bm{\zeta}_h^n)  + \tau b(\bm{\zeta}_h^n, p^n - \pi_h p^n )+  \tau \mu a_\mathcal{D}(\Pi_h \bm{u}^n - \bm{u}^n ,\bm{\zeta}_h^n)\nonumber\\ &- \tau \delta \mu b(\bm{\zeta}_h^n, \nabla_h \cdot \err - R_h([\err])) - R_t(\bm{\zeta}_h^n).  \label{eq:pressure_err_eq1}
\end{align}
The first term is bounded using Cauchy-Schwarz's inequality, \eqref{eq:results_inf_sup} and Young's inequality. 
\begin{equation}
|(\bm{e}_h^{n} - \bm{e}_h^{n-1}, \bm{\zeta}_h^n ) | \leq  C   \tau \|\delta_\tau \bm{e}_h^n\|^2 + \frac{\tau}{16} \| p_h^n - \pi_h p^n\|^2.
\end{equation}
With Lemma \ref{lemma:bd_eta_1_eta_2_semi} bound \eqref{eq:apendix_lemma2_firstestimate} in the appendix, we obtain: 
\begin{align}
|a_{\mathcal{C}}(\bm{u}_h^{n-1}; \bm{u}_h^{n-1}, \err , \bm{\zeta}_h^n)| \leq C  \|\bm{u}_h^{n-1}\|_{L^3(\Omega)}\|\err\|_{\DG} \|\bm{\zeta}_h^n \|_{\DG}.
\end{align}
With Lemma \ref{lemma:first_err_estimate},  \eqref{eq:approximation_prop_1}, \eqref{eq:inverse_estimate}, the assumption that $\bm{u} \in L^{\infty}(0,T; H^{k+1}(\Omega)^d)$, and the CFL condition \eqref{eq:cfl_cond_detla}, we have
\begin{align*}
&\|\bm{u}_h^{n-1}\|^2_{L^3(\Omega)}  \leq  2 \|\bm{e}_h^{n-1}\|^2_{L^3(\Omega)} + 2\| \Pi_h \bm{u}^{n-1}\|^2_{L^3(\Omega)}\\ & \leq C_\mu \left(1+\frac{1}{\mu} + \mu\right) h^{-d/3} (\tau + h^{2k}) +C\|\bm{u}^{n-1}\|^2_{W^{1,3}(\Omega)}\leq C_\mu \left(1+\frac{1}{\mu} + \mu\right). 
\end{align*}
Hence, using Young's inequality  and \eqref{eq:results_inf_sup}, we have:
\begin{align}
     |a_{\mathcal{C}}(\bm{u}_h^{n-1}; \bm{u}_h^{n-1}, \err , \bm{\zeta}_h^n)| \leq C_\mu\left(1+\frac{1}{\mu} + \mu\right)\|\err\|^2_{\DG}  + \frac{1}{16}  \|p_h^n - \pi_h p^n \|^2.
\end{align}
For the remaining nonlinear terms, we split them as follows: 
\begin{align}
 R_{\mathcal{C}}(\bm{\zeta}_h^n) &=  a_\mathcal{C}(\bm{u}_h^{n-1}; \bm{u}_h^{n-1}, \Pi_h \bm{u}^n - \bm{u}^n,\bm{\zeta}_h^n) +  a_\mathcal{C}(\bm{u}_h^n; \bm{e}_h^{n-1},\bm{u}^n, \bm{\zeta}_h^n ) \nonumber \\ &\quad + a_\mathcal{C}(\bm{u}_h^n; \Pi_h \bm{u}^{n-1} - \bm{u}^{n-1},\bm{u}^n, \bm{\zeta}_h^n ) +  a_\mathcal{C}(\bm{u}^n;\bm{u}^{n-1}-\bm{u}^n,\bm{u}^n,\bm{\zeta}_h^n)\nonumber  =\sum_{i = 1}^4  L_i^n 
\end{align}
We further split $L_1$ in the following way. 
\begin{align*}
L^n_1  &=      \mathcal{C}(\bm{e}_h^{n-1}, \Pi_h \bm{u}^n - \bm{u}^n,\bm{\zeta}_h^n) +  \mathcal{C}(\Pi_h \bm{u}^{n-1}, \Pi_h \bm{u}^n - \bm{u}^n, \bm{\zeta}_h^n) \\
&  - \mathcal{U}(\bm{u}_h^{n-1}; \bm{e}_h^{n-1}, \Pi_h \bm{u}^n - \bm{u}^n,\bm{\zeta}_h^n) 
-  \mathcal{U}(\bm{u}_h^{n-1}; \Pi_h \bm{u}^{n-1}, \Pi_h \bm{u}^n - \bm{u}^n,\bm{\zeta}_h^n).   
\end{align*}
We apply Lemma \ref{prep:bounds_nonlinear_terms}: \eqref{eq:second_estimate_nonlinear_term_C1} for the first term, 
\eqref{eq:second_estimate_nonlinear_term_U2} for the second and fourth terms, and \eqref{eq:second_estimate_nonlinear_term} for the third term.
\begin{equation*}
|L^n_1| \leq C (\| \bm{e}_h^{n-1}\|  + h^{k} \Vert \bm{u}^{n} \Vert_{H^{k+1}(\Omega)}) \| \bm{\zeta}_h^n\|_{\DG}. 
\end{equation*}
The term $L^n_2$ is bounded by \eqref{eq:first_estimate_nonlinear_form}, $L^n_3$ is bounded by  \eqref{eq:third_estimate_nonlinear_term},  and $L^n_4$ is bounded as follows:
\[
L_4^n = \int_\Omega (\bm{u}^{n-1}-\bm{u}^n) \cdot \nabla \bm{u}^n \cdot \bm{\zeta}_h^n  \leq  C\|\bm{u}^n -\bm{u}^{n-1}\||\bm{u}^n|_{W^{1,3}(\Omega)} \|\bm{\zeta}^n_h\|_{\DG}. 
\] 
With the assumption that $\bm{u} \in L^{\infty}(0,T;H^{k+1}(\Omega)^d)$ and \eqref{eq:results_inf_sup}, we obtain: 
\begin{align}
&|R_{\mathcal{C}}(\bm{\zeta}_h^n)| 
 \leq C  (\| \bm{e}_h^{n-1}\|^2  + h^{2k }  )+    C\tau  \int_{t^{n-1}}^{t^{n}} \| \partial_t \bm{u}\|^2 + \frac{1}{16} \| p_h^n - \pi_h p^n  \|^2.
\end{align}
Following similar arguments to bound (6.90) in \cite{inspaper1}, we have 
\[|R_t(\bm{\zeta}_h^n)| \leq \frac{1}{16} \tau \|p_h^n - \pi_h p^n\|^2 + C\tau^2 \int_{t^{n-1}}^{t^n} \|\partial_{tt} \bm{u}\|^2 + C h^{2k+2} \int_{t^{n-1}}^{t^n}\| \partial_t \bm{u}\|^2_{H^{k+1}(\Omega)}. \]
The remaining terms are handled via similar arguments as before. The details are skipped for brevity. We have:
\begin{align*}
 |a_\mathcal{D}(\err, \bm{\zeta}_h^n)|&\leq C\|\err\|_{\DG}\|\bm{\zeta}_h^n\|_{\DG}
\leq C\|\err\|_{\DG} \, \|p_h^n - \pi_h p^n\|,\\
 |b(\bm{\zeta}_h^n,  p^n - \pi_h p^n)| &\leq Ch^k\|\bm{\zeta}_h^n\|_{\DG} | p^n |_{H^k(\Omega)}
\leq Ch^k | p^n |_{H^k(\Omega)}\, \|p_h^n - \pi_h p^n\|,\\
  |a_\mathcal{D}(\Pi_h \bm{u}^n - \bm{u}^n , \bm{\zeta}_h^n) | &\leq C h^{k}| \bm{u}^n|_{H^{k+1}(\Omega)}\|\bm{\zeta}_h^n\|_{\DG}
\leq C h^{k}| \bm{u}^n|_{H^{k+1}(\Omega)} \, \|p_h^n - \pi_h p^n\|,\\
| b(\bm{\zeta}_h^n, \nabla_h \cdot \err - R_h([\err]))| &\leq 
C \|\bm{\zeta}_h^n\|_{\DG} \|\err \|_{\DG} \leq C \|\err \|_{\DG} \, \|p_h^n - \pi_h p^n\|.
\end{align*}
 Making use of Young's inequality for the above bounds with \eqref{eq:results_inf_sup}, substituting the bounds in \eqref{eq:pressure_err_eq1},
and summing the resulting inequality from $n=1$ to $n=m$. We obtain: 
\begin{align*}
  \frac{\tau}{2}  \sum_{n=1}^m \|p_h^n - \pi_h p^n\|^2 
 \leq C_{\mu}\left(\sum_{i=-1}^2 \mu^{i}\right)\tau\sum_{n=1}^m  \|\err\|_{\DG}^2
 + C\tau^2\int_0^T (\|\partial_{tt} \bm{u}\|^2  + \|\partial_t \bm{u}\|^2)\\
  + C\tau \sum_{n=1}^m (\| \bm{e}_h^{n-1} \|^2 + h^{2k}+ \|\delta_\tau \bm{e}_h^n\|^2) 
   + Ch^{2k+2}\int_0^T \|\partial_{t} \bm{u}\|^2_{H^{k+1}(\Omega)}  + C(\mu^2 + 1)h^{2k}.
 \end{align*}
The result is concluded by invoking Lemma \ref{lemma:first_err_estimate}, Lemma \ref{lemma:error_delta_velocity}, the bound on the initial error \eqref{eq:initial_error_eh},  and the triangle inequality.  \end{proof}

    \section{Appendix}
\begin{lemma}\label{lemma:appendix_delta_expression}
[Equality \eqref{eq:bform_delta_expression}]\end{lemma}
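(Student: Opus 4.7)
The plan is to obtain \eqref{eq:bform_delta_expression} by direct algebraic manipulation: combine the lift-operator representation \eqref{eq:property_err_delta} of $b(\delta_\tau \bm{e}_h^{n+1},\cdot)$ at consecutive levels with the polarization identity $2(a-b)a = a^2 - b^2 + (a-b)^2$, applied separately to the $\tilde\sigma/h_e$-weighted edge sum and to the $L^2$-inner product of lifts. The chief subtlety is that \eqref{eq:property_err_delta} is stated only for $n\geq 1$, so at the boundary case $n=1$ (where $\delta_\tau \bm{e}_h^n = \delta_\tau \bm{e}_h^1$ involves $\bm{e}_h^0$) the representation acquires an extra term, and this term is precisely what produces the $\delta_{n,1}$ correction on the right-hand side.

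For $n \geq 2$, I would subtract \eqref{eq:property_err_delta} at index $n$ from the identity at index $n+1$, tested against $q_h = \delta_\tau\phi_h^{n+1}$, to obtain
\begin{align*}
b(\delta_\tau \bm{e}_h^{n+1} - \delta_\tau \bm{e}_h^n, \delta_\tau \phi_h^{n+1}) &= -\tau \sum_{e\in\Gamma_h} \frac{\tilde\sigma}{h_e} \int_e [\delta_\tau \phi_h^{n+1} - \delta_\tau \phi_h^n][\delta_\tau \phi_h^{n+1}] \\
&\quad + \tau \bigl(\bm{G}_h([\delta_\tau \phi_h^{n+1} - \delta_\tau \phi_h^n]), \bm{G}_h([\delta_\tau \phi_h^{n+1}])\bigr).
\end{align*}
Multiplying by $-2$ and invoking the polarization identity edge-wise on the first sum and globally on the second inner product reproduces exactly $\tau(\tilde A_1^n - \tilde A_2^n)$ together with the two residual terms $\tau \sum_{e} \frac{\tilde\sigma}{h_e}\|[\delta_\tau\phi_h^{n+1}-\delta_\tau\phi_h^n]\|_{L^2(e)}^2$ and $-\tau\|\bm{G}_h([\delta_\tau\phi_h^{n+1}-\delta_\tau\phi_h^n])\|^2$, matching \eqref{eq:bform_delta_expression} with $\delta_{n,1}=0$.

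For $n=1$, the plan is identical except that $b(\delta_\tau \bm{e}_h^1, q_h)$ is computed directly from \eqref{eq:div_error_2} at level $1$ using $\phi_h^0 = 0$ (hence $[\phi_h^1] = \tau[\delta_\tau\phi_h^1]$ and $\bm{G}_h([\phi_h^1]) = \tau\bm{G}_h([\delta_\tau\phi_h^1])$), yielding
\[
b(\delta_\tau \bm{e}_h^1, q_h) = -\tau \sum_{e\in\Gamma_h} \frac{\tilde\sigma}{h_e} \int_e [\delta_\tau \phi_h^1][q_h] + \tau\bigl(\bm{G}_h([\delta_\tau \phi_h^1]),\bm{G}_h([q_h])\bigr) - \tau^{-1} b(\bm{e}_h^0, q_h).
\]
This is precisely \eqref{eq:property_err_delta} plus a defect $-\tau^{-1}b(\bm{e}_h^0,q_h)$ caused by $\bm{e}_h^0$ not being zero in general. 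Subtracting from the identity at level $2$, testing with $\delta_\tau\phi_h^2$, multiplying by $-2$, and applying polarization as before yields the same bulk $\tau(\tilde A_1^1-\tilde A_2^1)$ and square-of-difference contributions, while the defect is carried along to produce exactly the term $-\tfrac{2}{\tau}b(\bm{e}_h^0,\delta_\tau\phi_h^2)$.

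No estimate is invoked at any point; the derivation is purely algebraic and the only care needed is accounting for the $\bm{e}_h^0$-term at $n=1$. I therefore do not anticipate any genuine obstacle beyond this bookkeeping.
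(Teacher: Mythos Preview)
Your proposal is correct and follows essentially the same route as the paper's proof: both treat $n\ge 2$ by applying \eqref{eq:property_err_delta} at consecutive levels and the polarization identity $2(a,a-b)=\|a\|^2-\|b\|^2+\|a-b\|^2$, and both handle $n=1$ by expanding $b(\delta_\tau\bm{e}_h^1,q_h)=\tau^{-1}b(\bm{e}_h^1,q_h)-\tau^{-1}b(\bm{e}_h^0,q_h)$ and using \eqref{eq:div_error_2} together with $\phi_h^0=0$ to recover the same structure plus the defect $-\tfrac{2}{\tau}b(\bm{e}_h^0,\delta_\tau\phi_h^2)$. The only difference is cosmetic ordering.
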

\begin{proof}
For $n=1$, with \eqref{eq:property_err_delta}, we have
\begin{align*}
A &= -2b(\delta_\tau \bm{e}_h^2 - \delta_\tau \bm{e}_h^1, \delta_\tau \phi_h^2) = -2b(\delta_\tau \bm{e}_h^2, \delta_\tau \phi_h^2) + 2b( \delta_\tau \bm{e}_h^1, \delta_\tau \phi_h^2) \\
&= 2\tau \sum_{e\in \Gamma_h} \tilde{\sigma} h_e^{-1} \|\delta_{\tau} \phi_h^2\|_{L^2(e)}^2 - 2\tau \|\bm{G}_h([\delta_\tau \phi_h^2])\|^2 + 2b( \delta_\tau \bm{e}_h^1, \delta_\tau \phi_h^2).
\end{align*}
We can write the last term as follows. Since $\phi_h^0 = 0$, $\phi_h^1 =\tau \delta_\tau \phi_h^1$, we write 
\begin{align*}
&2b( \delta_\tau \bm{e}_h^1, \delta_\tau \phi_h^2) = \frac{2}{\tau} b(\bm{e}_h^1, \delta_\tau \phi_h^2) -  \frac{2}{\tau} b(\bm{e}_h^0, \delta_\tau \phi_h^2) \\ 
    &=  -2\tau \sum_{e\in \Gamma_h} \tilde{\sigma} h_e^{-1} \int_e [\delta_\tau \phi_h^1] [\delta_\tau \phi_h^2] + 2 \tau (\bm{G}_h([\delta_\tau \phi_h^1]), \bm{G}_h([\delta_\tau \phi_h^2])) -  \frac{2}{\tau} b(\bm{e}_h^0, \delta_\tau \phi_h^2).
\end{align*}
Thus, we have 
\begin{align*}
&A = \tau \sum_{e\in\Gamma_h}\tilde{\sigma} h_e^{-1} (\|[\delta_\tau \phi_h^2]\|_{L^2(e)}^2  -\|[\delta_\tau \phi_h^1]\|_{L^2(e)}^2 + \|[\delta_\tau \phi_h^2 - \delta_\tau \phi_h^1]\|_{L^2(e)} ^2 ) \\ & -  \tau (\|\bm{G}_h([\delta_\tau \phi_h^2])\|^2 -\|\bm{G}_h([\delta_\tau \phi_h^1])\|^2 + \|\bm{G}_h([\delta_\tau \phi_h^2 - \delta_\tau \phi_h^1])\|^2 ) -  \frac{2}{\tau} b(\bm{e}_h^0, \delta_\tau \phi_h^2) \\
& = \tau (\tilde{A}_1^1 - \tilde{A}_2^1) + \tau ( \sum_{e\in\Gamma_h} \tilde{\sigma} h_e^{-1}\|[\delta_\tau \phi_h^2 - \delta_\tau \phi_h^1]\|_{L^2(e)} ^2\\ &    - \|\bm{G}_h([\delta_\tau \phi_h^2 - \delta_\tau \phi_h^1])\|^2 ) -  \frac{2}{\tau} b(\bm{e}_h^0, \delta_\tau \phi_h^2).  
\end{align*} 
For $n \geq 2$, we use \eqref{eq:property_err_delta} for both terms. 
\begin{align*}
  -2b(\delta \bm{e}_h^{n+1} - \delta_\tau \bm{e}_h^n , \delta_\tau \phi_h^{n+1}) = 2 \tau \sum_{e\in \Gamma_h} \tilde{\sigma} h_e^{-1} \int_e [\delta_\tau \phi_h^{n+1}]([\delta_\tau \phi_h^{n+1}] -[\delta_\tau \phi_h^n]) \\ - 2\tau  (\bm{G}_h([\delta_\tau \phi_h^{n+1}], \bm{G}_h([\delta_\tau \phi_h^{n+1}] - \bm{G}_h([\delta_\tau \phi_h^{n}])).
\end{align*}
Using the fact that:
\[(f,f-g) = \frac{1}{2}\|f\|^2 - \frac12 \|g\|^2 + \frac12 \|f-g\|^2, \]
we obtain the result.
\end{proof}
\begin{lemma} \label{lemma:bound_I1} 
    Let $\bm{u}_h,\bm{\theta}_h,  \bm{w}_h \in \bm{X}_h$ and $\bm{v} \in \bm{X}$. There exists a constant $C$ independent of $h$ such that the following bounds hold for $I =\mathcal{U}(\bm{u}_h; \bm{u}_h, \bm{v},  \bm{w}_h) - \mathcal{U}(\bm{\theta}_h; \bm{u}_h, \bm{v}, \bm{w}_h)$. 
    \begin{align}
        |I| &\leq C \| \bm{u}_h - \bm{\theta}_h \|_{L^3(\Omega)}\left(\sum_{e\in \Gamma_h \cup \partial \Omega}\sigma h_e^{-1} \|[\bm{v}]\|_{L^2(e)}^2\right)^{1/2}\|\bm{w}_h\|_{\DG},  \label{eq:lemma_I_appendix_result1}\\ 
        |I| &\leq C \| \bm{u}_h - \bm{\theta}_h \|\left(\sum_{e\in\Gamma_h \cup \partial \Omega} \sigma h_e^{-1} \|[\bm{v}]\|_{L^{2}(e)}^2 \right)^{1/2}\|\bm{w}_h \|_{L^{\infty}(\Omega)} .\label{eq:lemma_I_appendix_result2}
    \end{align}
\end{lemma}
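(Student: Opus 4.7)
The plan is to track the difference of the two upwind forms face-by-face and exploit a simple sign inequality on the symmetric difference of the inflow boundaries. Since the integrand $\{\bm{u}_h\}\cdot\bm{n}_E (\bm{v}^{\mathrm{int}}-\bm{v}^{\mathrm{ext}})\cdot\bm{w}_h^{\mathrm{int}}$ is identical in both instances of $\mathcal{U}$, the entire difference $I$ comes from faces where $\{\bm{u}_h\}\cdot\bm{n}_E$ and $\{\bm{\theta}_h\}\cdot\bm{n}_E$ have opposite strict signs. For any such face (say, one that lies in $\partial E_{-}^{\bm{u}_h}\setminus\partial E_{-}^{\bm{\theta}_h}$, so $\{\bm{u}_h\}\cdot\bm{n}_E<0\leq\{\bm{\theta}_h\}\cdot\bm{n}_E$), I note that
\[
|\{\bm{u}_h\}\cdot\bm{n}_E|\;\leq\;|\{\bm{u}_h-\bm{\theta}_h\}\cdot\bm{n}_E|,
\]
since subtracting a nonnegative quantity from a negative one can only increase the magnitude. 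The symmetric case gives the same inequality, so on the entire symmetric difference of the two inflow boundaries the weight $|\{\bm{u}_h\}\cdot\bm{n}_E|$ can be replaced by $|\{\bm{u}_h-\bm{\theta}_h\}\cdot\bm{n}_E|$.

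With this replacement in place, the bound becomes
\[
|I| \;\leq\; \sum_{E\in\mesh_h}\sum_{e\subset\partial E}\int_e |\{\bm{u}_h-\bm{\theta}_h\}\cdot\bm{n}_E|\;|\bm{v}^{\mathrm{int}}-\bm{v}^{\mathrm{ext}}|\;|\bm{w}_h^{\mathrm{int}}|.
\]
From here, the two inequalities follow by applying H\"older's inequality on each face with different triples of exponents. For \eqref{eq:lemma_I_appendix_result1} I use the split $(L^3,L^2,L^6)$ and weight the $L^2$ factor by $h_e^{-1/2}$ to produce the energy-type seminorm of $\bm{v}$. Summing in $e$ using discrete H\"older, then applying the discrete trace inequality \eqref{eq:trace_ineq_discrete} to transfer the face norms of $\bm{u}_h-\bm{\theta}_h$ and $\bm{w}_h$ into element norms (absorbing the $h_e^{-1/r}$ factors against $h_e^{1/2}$), and finally the Poincar\'e inequality \eqref{eq:discreter_poincare} to dominate $\|\bm{w}_h\|_{L^6(\Omega)}$ by $\|\bm{w}_h\|_{\DG}$, gives \eqref{eq:lemma_I_appendix_result1}.

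For \eqref{eq:lemma_I_appendix_result2} I use the triple $(L^2,L^2,L^\infty)$, weight by $h_e^{-1/2}$ as before to create $(\sigma h_e^{-1}\|[\bm{v}]\|_{L^2(e)}^2)^{1/2}$, and use the trace estimate \eqref{eq:trace_ineq_discrete} only on $\bm{u}_h-\bm{\theta}_h$ to bound $h_e^{-1/2}\|\bm{u}_h-\bm{\theta}_h\|_{L^2(e)}$ by $h^{-1}\|\bm{u}_h-\bm{\theta}_h\|_{L^2(E)}$; the extra $h^{-1}$ factor is absorbed by the straightforward face-to-element bound $\|\bm{w}_h\|_{L^\infty(e)}\leq C\|\bm{w}_h\|_{L^\infty(\Omega)}$ together with discrete H\"older in the sum over elements.

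The only nontrivial step is the sign observation in the first paragraph, which cleanly packages the difference of the two integration domains into a single integral with weight $|\{\bm{u}_h-\bm{\theta}_h\}\cdot\bm{n}_E|$. Once that is in hand, everything that follows is a routine combination of H\"older, trace, inverse, and Poincar\'e estimates, and the constants are clearly independent of $h$.
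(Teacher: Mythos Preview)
Your approach is correct and essentially identical to the paper's: both rely on the pointwise sign inequality $|\{\bm{u}_h\}\cdot\bm{n}_E|\leq|\{\bm{u}_h-\bm{\theta}_h\}\cdot\bm{n}_E|$ on the symmetric difference of the two inflow sets, then finish with H\"older on each face with exponents $(3,2,6)$ or $(2,2,\infty)$, the discrete trace inequality \eqref{eq:trace_ineq_discrete}, and Poincar\'e \eqref{eq:discreter_poincare}. One correction to your bookkeeping for \eqref{eq:lemma_I_appendix_result2}: the factor $h_e^{-1/2}$ you place on $[\bm{v}]$ must be compensated by $h_e^{+1/2}$ on $\{\bm{u}_h-\bm{\theta}_h\}$, so that after the trace estimate you obtain $h_e^{1/2}\|\{\bm{u}_h-\bm{\theta}_h\}\|_{L^2(e)}\leq C\|\bm{u}_h-\bm{\theta}_h\|_{L^2(E)}$ directly---there is no leftover $h^{-1}$, and the $L^\infty$ face-to-domain bound on $\bm{w}_h$ cannot absorb any power of $h$ since it gains none.
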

\begin{proof}
    The proof of this Lemma closely follows the proof of Proposition 4.10 in \cite{girault2009dg}. The main idea of the proof is to write the expression of $\mathcal{U}$ in terms of the faces' contributions \cite{girault2009dg}. Take $\bm{z}_h \in \bm{X}_h$. Fix a face $e \in \Gamma_h$ adjacent to the elements $E_e^1$ and $E_e^2$ with $\bm{n}_e = \bm{n}_{E_e^1}$.  The contribution of $e$ to the expression $\mathcal{U}(\bm{z}_h; \bm{u}_h, \bm{v}, \bm{w}_h)$ is given by: 
\begin{equation}
\int_e \{ \bm{u}_h \} \cdot  \bm{n}_e [\bm{v}] \cdot  \bm{w}_h^{\bm{z}_h}, 
\end{equation}
where for $e \in \Gamma_h$,  $\bm{w}_h^{\bm{z}_h}$ is defined as follows. 
\begin{equation}
     \bm{w}_h^{\bm{z}_h} \vert_e = \begin{cases}
        \bm{w}_h \vert_{E_e^1},  & \{ \bm{z}_h \}  \cdot \bm{n}_e < 0, \\
        \bm{0},  &  \{\bm{z}_h\} \cdot \bm{n}_e = 0 , \\ 
        \bm{w}_h \vert_{E_e^2},  & \{ \bm{z}_h \}  \cdot \bm{n}_e  >0. \\ 
    \end{cases}
\end{equation}
If $\bm{e} \in \partial \Omega \cap E_e$, then $\bm{n}_e = \bm{n}_{\partial \Omega}$. Then, the contribution of $e$ is given by:
\begin{equation}
    \int_e \bm{u}_h \cdot \bm{n}_e \bm{v} \cdot \bm{w}_h^{\bm{z}_h}, 
\end{equation}
where for $e \in \partial \Omega$, $\bm{w}_h^{\bm{z}_h}$ is defined as follows.
\begin{equation}
    \bm{w}_h^{\bm{z}_h} \vert_e = \begin{cases}
        \bm{w}_h \vert_{E_e},  & \{ \bm{z}_h \}  \cdot \bm{n}_e < 0, \\
        \bm{0},  &  \mathrm{otherwise}.  \\ 
    \end{cases}
\end{equation}
Hence, we have the following expression for $I = \mathcal{U}(\bm{u}_h; \bm{u}_h, \bm{v}, \bm{w}_h) - \mathcal{U}(\bm{\theta}_h; \bm{u}_h, \bm{v}, \bm{w}_h):$ 
\begin{equation}
    I = \sum_{e \in \Gamma_h \cup \partial \Omega} \int_e \{\bm{u}_h\} \cdot \bm{n}_e [\bm{v}] (\bm{w}_h^{\bm{u}_h } - \bm{w}_h^{\bm{\theta}_h}). 
\end{equation}
We now split the domain of integration as follows \cite{girault2009dg}. 
\begin{equation}
    \Gamma_h \cup \partial \Omega = \mathcal{F}_1 \cup \mathcal{F}_2 \cup \mathcal{F}_3, 
\end{equation}
where \begin{align*}
    \mathcal{F}_1 & = \{e: \{\bm{u}_h\}\cdot \bm{n}_e \neq 0 \, \, \mathrm{ and } \, \, \{ \bm{\theta}_h \} \cdot \bm{n}_e \neq 0 \,\,  a.e \,\, \mathrm{ on } \,\, e \},\\ 
    \mathcal{F}_2 & = \{e: \{\bm{u}_h\}\cdot \bm{n}_e \neq 0 \, \, \mathrm{ and } \, \, \{ \bm{\theta}_h \} \cdot \bm{n}_e =  0 \,\,  a.e \,\, \mathrm{ on } \,\, e \}, \\ 
    \mathcal{F}_3 & = (\Gamma_h \cup \partial \Omega ) \backslash (\mathcal{F}_1 \cup \mathcal{F}_2).  
\end{align*}
The contribution of $\mathcal{F}_3$ to $I$ is zero. Consider $\mathcal{F}_1$ and split $e$ into $e_1$ where $\{\bm{u}_h\}\cdot \bm{n}_e$ and $\{\bm{\theta}_h\}\cdot \bm{n}_e$ have the same sign and into $e_2$ where they have opposite signs. On $e_1$, $\bm{w}_h^{ \bm{u}_h} - \bm{w}_h^{\bm{\theta}_h} = \bm{0}$. On $e_2$, $\bm{w}_h^{\bm{u}_h} - \bm{w}_h^{\bm{\theta}_h} = [\bm{w}_h]$ up to the sign. Note that, due to the opposite signs on $e_2$, we have 
\begin{equation}
    |\{ \bm{u}_h\} \cdot \bm{n}_e | \leq |\{\bm{\theta}_h\} \cdot \bm{n}_e  - \{ \bm{u}_h\} \cdot \bm{n}_e |.
\end{equation}
 We apply Holder's inequality, trace estimate \eqref{eq:trace_ineq_discrete}, and \eqref{eq:discreter_poincare}. We obtain:
\begin{align}
&\sum_{e \in \mathcal{F}_1} \left\vert \int_e  \{\bm{u}_h\} \cdot \bm{n}_e [\bm{v}] (\bm{w}_h^{ \bm{u}_h} - \bm{w}_h^{\bm{\theta}_h}) \right\vert \nonumber\\ &  \leq  \sum_{e\in \mathcal{F}_1} \|\{ \bm{\theta}_h\} \cdot \bm{n}_e - \{\bm{u}_h\} \cdot \bm{n}_e\|_{L^{3}(e)}\|[\bm{v}]\|_{L^2(e)}\| [\bm{w}_h] \|_{L^{6}(e)}  \nonumber\\ & 
\leq  \sum_{e\in \mathcal{F}_1} \sum_{i,j=1}^2 Ch^{-1/3}\|\bm{\theta}_h - \bm{u}_h\|_{L^{3}(E^i_e )} \|[\bm{v}]\|_{L^2(e)}h^{-1/6}\| \bm{w}_h \|_{L^{6}(E_e^j)}  \nonumber \\ &
 \leq C \|\bm{\theta}_h - \bm{u}_h \|_{L^3(\Omega)} \left(\sum_{e\in\Gamma_h \cup \partial \Omega} \sigma h_e^{-1} \|[\bm{v}]\|_{L^{2}(e)}^2 \right)^{1/2}\| \bm{w}_h \|_{\DG}. \label{eq:appendix_lemmaI_1}  
\end{align}
An alternative bound reads: 
\begin{align}
&\sum_{e\in\mathcal{F}_1} \left\vert \int_e  \{\bm{u}_h\} \cdot \bm{n}_e [\bm{v}] (\bm{w}_h^{\bm{u}_h} - \bm{w}_h^{\bm{\theta}_h}) \right\vert \nonumber \\ & \leq C \|\bm{w}_h \|_{L^{\infty}(\Omega)} \|\bm{\theta}_h - \bm{u}_h\|\left(\sum_{e\in\Gamma_h \cup \partial \Omega} \sigma h_e^{-1} \|[\bm{v}]\|_{L^{2}(e)}^2 \right)^{1/2}.\label{eq:appendix_lemmaI_2}
\end{align}
The contribution of $\mathcal{F}_2$ to $I$ is bounded similarly by: 
\begin{multline}
    \sum_{e \in \mathcal{F}_2} \left\vert \int_e \{\bm{u}_h\} \cdot \bm{n}_e [\bm{v}] \bm{w}_h^{ \bm{u}_h} \right \vert \\  \leq C  \|\bm{\theta}_h - \bm{u}_h \|_{L^3(\Omega)} \left(\sum_{e\in\Gamma_h \cup \partial \Omega} \sigma h_e^{-1} \|[\bm{v}]\|_{L^{2}(e)}^2 \right)^{1/2} \| \bm{w}_h \|_{\DG}. \label{eq:appendix_lemmaI_3}
\end{multline}
Alternatively, we have 
\begin{multline}
    \sum_{e \in \mathcal{F}_2} \left\vert \int_e \{\bm{u}_h\} \cdot \bm{n}_e [\bm{v}] \bm{w}_h^{\bm{u}_h} \right \vert \\  \leq  \|\bm{\theta}_h - \bm{u}_h \| \left(\sum_{e\in\Gamma_h \cup \partial \Omega} \sigma h_e^{-1} \|[\bm{v}]\|_{L^{2}(e)}^2 \right)^{1/2}\| \bm{w}_h \|_{L^{\infty}(\Omega)}. \label{eq:appendix_lemmaI_4}
\end{multline}
Bounds \eqref{eq:appendix_lemmaI_1} and \eqref{eq:appendix_lemmaI_3} yield  \eqref{eq:lemma_I_appendix_result1}. Bounds \eqref{eq:appendix_lemmaI_2} and \eqref{eq:appendix_lemmaI_4} yield \eqref{eq:lemma_I_appendix_result2}. 
\end{proof}
\begin{lemma}
\label{lemma:bd_eta_1_eta_2_semi}
Let $\bm{u}_h,\bm{\theta}_h, \bm{v}_h$ and $\bm{w}_h \in \bm{X}_h$. There exists a constant $C$ independent of $h$ such that:
\begin{align}
|\mathcal{C}(\bm{u}_h, \bm{v}_h, \bm{w}_h )| + |\mathcal{U}(\bm{\theta}_h; \bm{u}_h, \bm{v}_h, \bm{w}_h ) | & \leq C \| \bm{u}_h \|_{L^3(\Omega)}\|\bm{w}_h\|_{\DG} \|\bm{v}_h \|_{\DG}.  \label{eq:apendix_lemma2_firstestimate}
\end{align}
In addition, let $\bm{v} = \bm{z} + \bm{z}_h$ where $\bm{z} \in \bm{X}$ and $\bm{z}_h \in \bm{X}_h$. We have, 
\begin{align}
&|\mathcal{C}(\bm{u}_h, \bm{v}, \bm{w}_h )|+| \mathcal{U}(\bm{\theta}_h; \bm{u}_h, \bm{v}, \bm{w}_h ) | \leq C\|\bm{u}_h\|\left(\| \nabla_h \bm{w}_h\|_{L^3(\Omega)} + \| \bm{w}_h\|_{L^{\infty}(\Omega)}\right) \|\bm{v}\|_{\DG}\nonumber \\ &  + C \|\bm{u}_h \|_{L^3(\Omega)}\left(\sum_{e\in\Gamma_h} \sigma h_e^{-1} \| [\bm{w}_h]\|_{L^2(e)}^2\right)^{1/2}\|\bm{z}_h\|_{\DG} \nonumber \\ &+ C\|\bm{u}_h\| \|\bm{w}_h\|_{L^{\infty}(\Omega)}(h^{-1} \| \bm{z}\| + \|\nabla_h \bm{z}\|). \label{eq:appendix_lemma3_secondestimate}
\end{align}
\end{lemma}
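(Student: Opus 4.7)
The plan is to handle both bounds by the same underlying strategy: rewrite $\mathcal{C}$ via elementwise integration by parts on the divergence term $\tfrac12\int_E(\nabla\cdot\bm{u}_h)(\bm{v}_h\cdot\bm{w}_h)$ so that no derivative falls on $\bm{u}_h$, then bound $\mathcal{U}$ directly by H\"older on each face. Using the scalar jump identity $[\bm{u}_h\cdot\bm{n}_e(\bm{v}_h\cdot\bm{w}_h)] = [\bm{u}_h]\cdot\bm{n}_e\{\bm{v}_h\cdot\bm{w}_h\} + \{\bm{u}_h\}\cdot\bm{n}_e[\bm{v}_h\cdot\bm{w}_h]$ to collect the boundary contributions, the definition of $\mathcal{C}$ reduces to
\begin{equation*}
\mathcal{C}(\bm{u}_h,\bm{v}_h,\bm{w}_h) = \sum_{E\in\mesh_h}\!\int_E (\bm{u}_h\cdot\nabla\bm{v}_h)\cdot\bm{w}_h \;-\; \tfrac12\!\sum_{E\in\mesh_h}\!\int_E \bm{u}_h\cdot\nabla(\bm{v}_h\cdot\bm{w}_h) \;+\; \tfrac12\!\sum_{e\in\Gamma_h}\!\int_e\{\bm{u}_h\}\cdot\bm{n}_e[\bm{v}_h\cdot\bm{w}_h].
\end{equation*}
The $-\tfrac12\sum_e\int_e[\bm{u}_h]\cdot\bm{n}_e\{\bm{v}_h\cdot\bm{w}_h\}$ term already present in $\mathcal{C}$ cancels exactly the $[\bm{u}_h]\{\bm{v}_h\cdot\bm{w}_h\}$ piece from the facewise reassembly, both on interior and boundary faces (on $\partial\Omega$, $[\bm{u}_h]=\bm{u}_h$ and $\{\bm{v}_h\cdot\bm{w}_h\}=\bm{v}_h\cdot\bm{w}_h$, so the boundary contribution reduces exactly right). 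The same identity holds verbatim with $\bm{v}_h$ replaced by any $\bm{v}\in\bm{X}$ on each element.

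For estimate \eqref{eq:apendix_lemma2_firstestimate}, I will bound each of the three terms in the identity by H\"older with the split $L^3\times L^2\times L^6$, use the discrete Poincar\'e inequality \eqref{eq:discreter_poincare} to absorb $\|\bm{v}_h\|_{L^6}$ and $\|\bm{w}_h\|_{L^6}$ into $\|\cdot\|_{\DG}$, and expand the face term via $[\bm{v}_h\cdot\bm{w}_h]=[\bm{v}_h]\cdot\{\bm{w}_h\}+\{\bm{v}_h\}\cdot[\bm{w}_h]$; the discrete trace inequality \eqref{eq:trace_ineq_discrete} then absorbs the $h_e^{-1/2}$ from the face norms into the jump seminorm of $\|\cdot\|_{\DG}$. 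For $\mathcal{U}$, I use $\partial E_-^{\bm{\theta}_h}\subset\partial E$, apply H\"older facewise with the same $L^3\times L^2\times L^6$ split on $\{\bm{u}_h\}$, $[\bm{v}_h]$, $\bm{w}_h^{\mathrm{int}}$, and combine discrete trace to lift every factor to the element where it is controlled by the broken DG norm.

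For estimate \eqref{eq:appendix_lemma3_secondestimate}, split $\bm{v}=\bm{z}+\bm{z}_h$ by linearity. For the $\bm{z}_h$ piece I reuse the same integration-by-parts identity, but now deploy the H\"older splits $L^2\times L^2\times L^\infty$ for zero-order $\bm{w}_h$ terms and $L^2\times L^6\times L^3$ whenever $\nabla\bm{w}_h$ appears, with Poincar\'e \eqref{eq:discreter_poincare} bounding $\|\bm{z}_h\|_{L^6}\leq C\|\bm{z}_h\|_{\DG}$; the face contribution $\{\bm{z}_h\}\cdot[\bm{w}_h]$ is bounded by the $L^3\times L^6\times L^2$ split with discrete trace \eqref{eq:trace_ineq_discrete} and produces exactly term (II). For the $\bm{z}$ piece I do \textbf{not} integrate by parts: the volume gradient term is bounded by $\|\bm{u}_h\|\|\nabla_h\bm{z}\|\|\bm{w}_h\|_{L^\infty}$; the divergence term is bounded using the inverse estimate \eqref{eq:inverse_estimate_dG} on $\nabla_h\bm{u}_h$, yielding the $h^{-1}\|\bm{z}\|$ contribution in (III); the $[\bm{u}_h]\{\bm{z}\cdot\bm{w}_h\}$ face term combines the discrete trace on $\bm{u}_h$ with the continuous trace \eqref{eq:trace_ineq_continuous} applied to $\bm{z}$, producing $h^{-1}\|\bm{z}\|+\|\nabla_h\bm{z}\|$. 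The $\mathcal{U}$ contribution is handled analogously: split $\bm{v}=\bm{z}+\bm{z}_h$, use $L^2\times L^2\times L^\infty$ H\"older per face, discrete trace for the $\bm{z}_h$ factor (giving (I)), and continuous trace for the $\bm{z}$ factor (giving (III)).

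The main obstacle is the bookkeeping: many volume and face pieces arise from integration by parts and jump decomposition, and each must be attributed to the correct term (I), (II) or (III) of \eqref{eq:appendix_lemma3_secondestimate}. Especially delicate is verifying that the boundary-face contributions ($e\subset\partial\Omega$) cancel in the rearrangement so that the residual face term runs only over $\Gamma_h$, and that the choice of H\"older exponent on each face integral matches the norm permitted on $\bm{u}_h$ (always $L^2$ in (I) and (III), $L^3$ only in (II)) while remaining consistent with the trace inequalities needed to pass from face to element norms.
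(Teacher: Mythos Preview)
Your integration-by-parts identity and the treatment of the first estimate are correct and coincide with the paper's argument (the paper expresses it as $\mathcal{C}(\bm{u}_h,\bm{v}_h,\bm{w}_h)=\sum_E\int_E(\bm{u}_h\cdot\nabla\bm{v}_h)\cdot\bm{w}_h+\tfrac12 b(\bm{u}_h,\bm{v}_h\cdot\bm{w}_h)$ and then invokes \eqref{eq:equiv_form_b}, which is precisely your rewritten form).

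For the second estimate, however, your decision to split $\bm v=\bm z+\bm z_h$ \emph{before} applying the identity creates a gap. In the $\bm z_h$-piece, the volume term $-\tfrac12\sum_E\int_E\bm u_h\cdot(\nabla\bm w_h)^T\bm z_h$ with your $L^2\times L^6\times L^3$ split and Poincar\'e gives $C\|\bm u_h\|\,\|\nabla_h\bm w_h\|_{L^3}\|\bm z_h\|_{\DG}$. To attribute this to the right-hand side you must write $\|\bm z_h\|_{\DG}\le\|\bm v\|_{\DG}+\|\bm z\|_{\DG}$; the first part lands in (I), but the residual $C\|\bm u_h\|\,\|\nabla_h\bm w_h\|_{L^3}\|\bm z\|_{\DG}$ is \emph{not} controlled by (III), since (III) carries $\|\bm w_h\|_{L^\infty}$, not $\|\nabla_h\bm w_h\|_{L^3}$ (and trading one for the other via an inverse estimate costs a factor $h^{-1}$).

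The fix is exactly what the paper does: apply the rewritten identity to the full $\bm v\in\bm X$. Then the same volume term reads $-\tfrac12\sum_E\int_E\bm u_h\cdot(\nabla\bm w_h)^T\bm v$ and is bounded directly by $C\|\bm u_h\|\,\|\nabla_h\bm w_h\|_{L^3}\|\bm v\|_{L^6}\le C\|\bm u_h\|\,\|\nabla_h\bm w_h\|_{L^3}\|\bm v\|_{\DG}$ using Poincar\'e \eqref{eq:discreter_poincare} on $\bm X$, landing cleanly in (I). The splitting $\bm v=\bm z+\bm z_h$ is needed \emph{only} for the face term $\{\bm u_h\}\cdot\bm n_e\,[\bm w_h]\cdot\{\bm v\}$: the $\{\bm z_h\}$ part takes discrete trace and Poincar\'e to give (II), while the $\{\bm z\}$ part takes the continuous trace \eqref{eq:trace_ineq_continuous} to give (III). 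Likewise for $\mathcal U$ there is no need to split: the paper bounds it in one stroke by $C\|\bm w_h\|_{L^\infty}\|\bm u_h\|\,\|\bm v\|_{\DG}$, which already sits in (I).
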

\begin{proof}
We have
\begin{align*}
\mathcal{C}(\bm{u}_h, \bm{v}_h, \bm{w}_h ) = \sum_{E \in \mesh_h} \int_E (\bm{u}_h \cdot \nabla \bm{v}_h)\cdot \bm{w}_h + \frac{1}{2} b(\bm{u}_h, \bm{v}_h \cdot \bm{w}_h).
\end{align*}
The first term is bounded by Cauchy-Schwarz inequality and \eqref{eq:discreter_poincare}. We have
\begin{align}
    \sum_{E \in \mesh_h} \left| \int_E (\bm{u}_h \cdot \nabla \bm{v}_h)\cdot \bm{w}_h \right|  \leq C \|\bm{u}_h\|_{L^3(\Omega)} \| \nabla \bm{v}_h \| \|\bm{w}_h\|_{\DG}. \label{eq:appendix_lemma2_1}
\end{align}
For the second term, we use \eqref{eq:equiv_form_b} and the following identity: 
\begin{equation}
[\bm{v}_h \cdot \bm{w}_h] = [\bm{v}_h ] \cdot \{ \bm{w}_h \} +  [\bm{w}_h ] \cdot \{ \bm{v}_h \}, \quad \forall e \in \Gamma_h 
\end{equation}
We obtain 
\begin{align*}
\frac{1}{2} b(\bm{u}_h, \bm{v}_h \cdot \bm{w}_h )& = -\frac{1}{2} \sum_{E\in \mesh_h} \int_{E} \bm{u}_h \cdot ((\nabla \bm{v}_h)^T \bm{w}_h  + (\nabla \bm{w}_h)^T \bm{v}_h)  \\ & \quad + \frac{1}{2}
\sum_{e\in\Gamma_h} \int_e \{\bm{u}_h\} \cdot \bm{n}_e ([\bm{v}_h] \cdot \{\bm{w}_h\} + [\bm{w}_h] \cdot \{\bm{v}_h\}).
\end{align*}
With Holder's inequality, trace estimate \eqref{eq:trace_ineq_discrete} and \eqref{eq:discreter_poincare}, we have 
\begin{align}
\frac{1}{2}\left| b(\bm{u}_h, \bm{v}_h \cdot \bm{w}_h)\right|   
 \leq C  \| \bm{u}_h\|_{L^3(\Omega)} \| \bm{v}_h \|_{\DG}  \|\bm{w}_h\|_{\DG}. \label{eq:appendix_lemma2_2} 
\end{align}
The upwind term is bounded via similar arguments: 
\begin{align}
|\mathcal{U}(\bm{\theta}_h; \bm{u}_h, \bm{v}_h, \bm{w}_h )| \leq  C \| \bm{u}_h\|_{L^3(\Omega)} \| \bm{v}_h \|_{\DG} \|\bm{w}_h\|_{\DG}.  \label{eq:appendix_lemma2_3}
\end{align}
Bounds \eqref{eq:appendix_lemma2_1}, \eqref{eq:appendix_lemma2_2} and \eqref{eq:appendix_lemma2_3} yield bound \eqref{eq:apendix_lemma2_firstestimate}. 
Let  $\bm{v}_h$ be replaced by $\bm{v} = \bm{z} + \bm{z}_h$ where $\bm{z} \in \bm{X}$ and $\bm{z}_h \in \bm{X}_h$. We replace bound \eqref{eq:appendix_lemma2_1} by: 
\begin{align}
\sum_{E \in \mesh_h} \left\vert \int_E (\bm{u}_h \cdot \nabla \bm{v} ) \cdot \bm{w}_h \right\vert \leq \| \bm{u}_h \|\| \nabla_h \bm{v} \| \|\bm{w}_h\|_{L^\infty(\Omega)}.\label{eq:appendix_lemma2_4}
\end{align}
We also write:  
\begin{align*}
    \frac{1}{2} b(\bm{u}_h, \bm{v} \cdot \bm{w}_h )& = -\frac{1}{2} \sum_{E\in \mesh_h} \int_{E} \bm{u}_h \cdot ((\nabla \bm{v})^T \bm{w}_h  + (\nabla \bm{w}_h)^T \bm{v})  \\ & \quad + \frac{1}{2}
    \sum_{e\in\Gamma_h} \int_e \{\bm{u}_h\} \cdot \bm{n}_e ([\bm{v}] \cdot \{\bm{w}_h\} + [\bm{w}_h] \cdot \{\bm{z}\} + [\bm{w}_h] \cdot \{\bm{z}_h\}).
    \end{align*}
With H\"{o}lder's inequality and trace estimate \eqref{eq:trace_ineq_discrete}, we obtain 
\begin{align}
&\frac{1}{2}|b(\bm{u}_h, \bm{v} \cdot \bm{w}_h)| \leq C \|\bm{u}_h\|\| \nabla_h \bm{v}\| \| \bm{w}_h \|_{L^{\infty}(\Omega)} + C \| \bm{u}_h \| \|\nabla_h \bm{w}_h\|_{L^3(\Omega)} \| \bm{v}\|_{L^6(\Omega)}  \nonumber \\ & \quad + C \| \bm{w}_h \|_{L^{\infty}(\Omega)}\|\bm{u}_h \| \left(\sum_{e\in\Gamma_h} \sigma h_e^{-1}\| [\bm{v}]\|_{L^2(e)}^2\right)^{1/2} \nonumber \\ & \quad + C \| \bm{w}_h \|_{L^{\infty}(\Omega)}\|\bm{u}_h\| (h^{-1} \| \bm{z}\| + \| \nabla_h \bm{z}\|)  \nonumber \\ & \quad + C \|\bm{u}_h \|_{L^3(\Omega)} \left(\sum_{e\in\Gamma_h} \sigma h_e^{-1} \| [\bm{w}_h] \|^2_{L^2(e)}\right)^{1/2} \|\bm{z}_h\|_{L^6(\Omega)}.  \label{eq:appendix_lemma2_5}
\end{align}
An alternative bound for the upwind term reads: 
\begin{align}
    |\mathcal{U}(\bm{\theta}_h; \bm{u}_h, \bm{v}, \bm{w}_h )| \leq C \| \bm{w}_h \|_{L^\infty(\Omega)} \| \bm{u}_h \|\| \bm{v}\|_{\DG}.\label{eq:appendix_lemma2_6}
\end{align}
Bounds \eqref{eq:appendix_lemma2_4}, \eqref{eq:appendix_lemma2_5}, \eqref{eq:appendix_lemma2_6}, and \eqref{eq:discreter_poincare} yield estimate \eqref{eq:appendix_lemma3_secondestimate}. 
\end{proof}
\begin{lemma}\label{lemma:bound_hat_Rt_stability_proof}
    [Bound \eqref{eq:bound_hat_Rt_stability_proof}]
\end{lemma}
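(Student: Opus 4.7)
The goal is to control the two mismatch terms inside
\[
\hat R_t(\bm{\theta}_h) = \bigl((\partial_t \bm{u})^{n+1} - (\partial_t \bm{u})^n,\bm{\theta}_h\bigr) - \bigl(\delta_\tau \Pi_h \bm{u}^{n+1} - \delta_\tau \Pi_h \bm{u}^n,\bm{\theta}_h\bigr),
\]
by adding and subtracting $(\delta_\tau \bm{u}^{n+1} - \delta_\tau \bm{u}^n,\bm{\theta}_h)$. With $\bm{\theta}_h = \delta_\tau \errn$ this gives a clean splitting
\[
\hat R_t(\delta_\tau \errn) = R_1 + R_2,
\]
where $R_1 = ((\partial_t\bm u)^{n+1}-(\partial_t\bm u)^n - \delta_\tau \bm u^{n+1}+\delta_\tau \bm u^n,\,\delta_\tau \errn)$ involves only the exact solution, and $R_2 = (\delta_\tau(I-\Pi_h)\bm u^{n+1} - \delta_\tau(I-\Pi_h)\bm u^n,\,\delta_\tau \errn)$ is a pure approximation-theoretic residual (here I use that $\delta_\tau$ and $\Pi_h$ commute).

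For $R_1$, Taylor's theorem with integral remainder gives
\[
(\partial_t \bm{u})^{n+1} - \delta_\tau \bm{u}^{n+1} = \frac{1}{\tau}\int_{t^n}^{t^{n+1}} (s-t^n)\,\partial_{tt}\bm{u}(s)\,ds,
\]
and similarly on $[t^{n-1},t^n]$. A Cauchy--Schwarz bound in time then yields
$\|(\partial_t \bm{u})^{n+1} - (\partial_t \bm{u})^n - \delta_\tau \bm{u}^{n+1} + \delta_\tau \bm{u}^n\| \leq C\tau^{1/2}\bigl(\int_{t^{n-1}}^{t^{n+1}} \|\partial_{tt}\bm{u}\|^2\bigr)^{1/2}$. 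Pair this with the discrete Poincar\'e inequality \eqref{eq:discreter_poincare} applied to $\delta_\tau \errn$, and use Young's inequality to absorb $\|\delta_\tau \errn\|_{\DG}^2$ into $\epsilon\tau\mu\|\delta_\tau\errn\|_{\DG}^2$, producing the $\|\partial_{tt}\bm u\|^2$ term on the right-hand side of \eqref{eq:bound_hat_Rt_stability_proof}.

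For $R_2$, writing $\bm E(t) = (I - \Pi_h)\bm u(t)$ gives $\bm E^{n+1}-\bm E^n = \int_{t^n}^{t^{n+1}} (I-\Pi_h)\partial_t \bm u(s)\,ds$. The $L^2$ approximation property \eqref{eq:approximation_prop_1} with $s=2$, $r=2$ (valid since $k\geq 1$) yields $\|(I-\Pi_h)\partial_t \bm u(s)\| \leq C h^2 |\partial_t\bm u(s)|_{H^2(\Omega)}$, and hence, by Cauchy--Schwarz in time,
\[
\|\delta_\tau \bm E^{n+1} - \delta_\tau \bm E^n\| \leq \frac{C h^2}{\sqrt{\tau}}\Bigl(\int_{t^{n-1}}^{t^{n+1}}|\partial_t \bm u|_{H^2(\Omega)}^2\Bigr)^{1/2}.
\]
Cauchy--Schwarz, Poincar\'e, and Young's inequality then produce a right-hand side of the form $\epsilon \tau\mu\|\delta_\tau\errn\|_{\DG}^2 + (C/(\epsilon\mu))(h^4/\tau^2)\int|\partial_t\bm u|_{H^2}^2$. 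Finally, the lower CFL bound $c_1 h^2 \leq \tau$ from \eqref{eq:cfl_cond_detla} gives $h^4/\tau^2 \leq c_1^{-2}$, which converts the prefactor into a constant and yields exactly the second residual term in \eqref{eq:bound_hat_Rt_stability_proof}. Adding the bounds on $R_1$ and $R_2$ and relabeling $\epsilon$ completes the proof.

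The only delicate point is step $R_2$: a naive application of the approximation property would produce $h^2$ without the extra $\tau^{-1/2}$, and one must commute $\delta_\tau$ past $\Pi_h$ and recognize the telescoped integral structure to avoid losing powers of $\tau$. Absent the lower CFL bound this step would fail to give the clean right-hand side, which is precisely why the hypothesis $c_1 h^2 \leq \tau$ is built into Lemma~\ref{lemma:stability_time_derivative}.
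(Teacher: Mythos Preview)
Your proof is correct and follows essentially the same route as the paper: both split $\hat R_t$ into a pure time-discretization error (your $R_1$) and an interpolation error (your $R_2$, the paper's $\bm{E}^{n+1}-2\bm{E}^n+\bm{E}^{n-1}$ term), bound each via Taylor expansions, Cauchy--Schwarz, Poincar\'e, and Young, and then invoke the lower CFL bound $c_1 h^2\le\tau$ to convert the $h^4/\tau^2$ prefactor into a constant. The only differences are cosmetic (the paper writes $\bm{E}^n=\Pi_h\bm u^n-\bm u^n$ rather than $(I-\Pi_h)\bm u^n$, and bounds $\|\bm{E}^n-\bm{E}^{n-1}\|^2$ directly instead of $\|\delta_\tau\bm E^{n+1}-\delta_\tau\bm E^n\|$).
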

\begin{proof}
 For $n\geq 0$, recall that  $\bm{E}^n \in \bm{X}$: $\bm{E}^n = \Pi_h \bm{u}^n - \bm{u}^n$. We write: 
\begin{align*}
\hat{R}_t(\delta_\tau \errn) & = \frac{1}{\tau}\left(\tau(\partial_t \bm{u})^{n+1} - \tau (\partial_t \bm{u})^n - \bm{u}^{n+1} + 2\bm{u}^n - \bm{u}^{n-1}, \delta_\tau \errn \right) \\ & \quad  -\frac{1}{\tau}\left(\bm{E}^{n+1} - 2\bm{E}^n + \bm{E}^{n-1}, \delta_\tau \errn\right). 
\end{align*}
With Cauchy-Schwarz's and Young's inequality, we have 
\begin{align*}
& |\hat{R}_t(\delta_\tau \errn)|  \leq \ell \tau \mu \| \delta_\tau \errn\|_{\DG}^2 + \frac{C}{\mu}\tau^{-3}\|\bm{u}^n - \bm{u}^{n+1}+ \tau (\partial_t \bm{u})^{n+1}\|^2 \\
& \quad + \frac{C}{\mu}\tau^{-3} \|\bm{u}^n - \bm{u}^{n-1} - \tau (\partial_t \bm{u})^n\|^2  + \frac{C}{\mu} \tau^{-3} \|\bm{E}^{n+1} - \bm{E}^n \|^2 + \frac{C}{\mu}\tau^{-3}\|\bm{E}^n - \bm{E}^{n-1}\|^2. 
\end{align*}
With Taylor expansions and Cauchy-Schwarz's inequality, we  have  
\begin{align*}
    \|\bm{u}^n - \bm{u}^{n+1}+ \tau (\partial_t \bm{u})^{n+1}\|^2 &\leq \tau^3 \int_{t^n}^{t^{n+1}} \|\partial_{tt} \bm{u}\|^2,  \\ 
    \|\bm{u}^n - \bm{u}^{n-1} - \tau (\partial_t \bm{u})^n\|^2 &\leq \tau^3 \int_{t^{n-1}}^{t^{n}} \|\partial_{tt} \bm{u}\|^2. 
\end{align*}
Similarly, with a Taylor expansion and 
 approximation properties \eqref{eq:approximation_prop_1} - \eqref{eq:approximation_prop_2}, for $n \geq 1$:
\begin{align*}
\| \bm{E}^n - \bm{E}^{n-1} \|^2 \leq \tau \int_{t^{n-1}}^{t^n} \| \partial_t \bm{E}\|^2 \leq C \tau h^4 \int_{t^{n-1}}^{t^n} |\partial_t \bm{u} |^2_{H^2(\Omega)}. 
\end{align*}
With the CFL condition \eqref{eq:cfl_cond_detla}, we know that $h^4 \leq c_2^2 \tau^2$. This implies that for $n \geq 1$: 
\begin{align*}
    \| \bm{E}^n - \bm{E}^{n-1} \|^2 \leq C \tau^3 \int_{t^{n-1}}^{t^n} |\partial_t \bm{u} |^2_{H^2(\Omega)}. 
    \end{align*}
Thus, by using these bounds, we obtain 
\begin{align}
    \hat{R}_t(\delta_\tau\errn) & \leq \ell \tau \mu \| \delta_\tau \errn\|_{\DG}^2 +  \frac{C}{\mu} \int_{t^{n-1}}^{t^{n+1}} \| \partial_{tt} \bm{u }\|^2 + \frac{C}{\mu} \int_{t^{n-1}}^{t^{n+1}}|\partial_t \bm{u}|_{H^{2}(\Omega)}^2. 
\end{align}
\end{proof}
\begin{lemma}\label{lemma:bound_on_I11}[Bound \eqref{eq:boundI11}]
\end{lemma}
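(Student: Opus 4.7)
The plan is to mirror the argument that produced bound \eqref{eq:bd_A_1} for $A_1^n$ in the proof of Theorem~\ref{theorem:improved_estimate_velocity}, with $\bm{U}_h^n-\bm{U}^n$ replaced by $\delta_\tau \bm{U}_h^{n+1}-\delta_\tau \bm{U}^{n+1}$ and with the velocity error $\bm{v}_h^n-\bm{u}^n$ replaced by $\delta_\tau \bm{E}^{n+1}+\delta_\tau\errn$. I would begin by expanding the definition of $\bar{a}_\mathcal{C}$ and combining the volume divergence term with the interior face jump via \eqref{eq:equiv_form_b}, so that $I_{1,1}^n$ decomposes as $I_{1,1}^n=\sum_{i=1}^4 B_i^n$: a gradient volume contribution $B_1^n$, a term $B_2^n$ built from $-\tfrac12 b(\bm{u}_h^{n-1},(\delta_\tau \bm{U}_h^{n+1}-\delta_\tau \bm{U}^{n+1})\cdot(\delta_\tau \bm{E}^{n+1}+\delta_\tau\errn))$, an interior upwind contribution $B_3^n$ over $\partial E_-^{\bm{u}_h^{n-1}}\setminus\partial\Omega$, and a boundary residual $B_4^n$ on $\partial\Omega$. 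In each contribution I would split $\bm{u}_h^{n-1}=\bm{e}_h^{n-1}+\Pi_h\bm{u}^{n-1}$ and proceed by H\"older's inequality.

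For $B_1^n$ and $B_2^n$, the derivation follows \eqref{eq:bd_A_111}--\eqref{eq:bd_A_12}: whenever $\|\delta_\tau \bm{U}_h^{n+1}-\delta_\tau \bm{U}^{n+1}\|_{\mathrm{DG}}$ or its $L^3$-counterpart appears, I gain a factor $Ch\,\|\delta_\tau\bm{\chi}^{n+1}\|$ by combining \eqref{eq:conv_delta_dual} with \eqref{eq:discreter_poincare}, and then estimate $\|\delta_\tau\bm{\chi}^{n+1}\|\le \|\delta_\tau\bm{e}_h^{n+1}\|+Ch^{k+1}|\delta_\tau\bm{u}^{n+1}|_{H^{k+1}(\Omega)}$ via \eqref{eq:bounddeltatauchi}. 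The $L^6$-norm of $\delta_\tau\errn$ is controlled by $\|\delta_\tau\errn\|_{\mathrm{DG}}$ through \eqref{eq:discreter_poincare}, the $L^2$-norm is reduced to $\|\delta_\tau\bm{e}_h^{n+1}\|+C\tau|\delta_\tau\phi_h^{n+1}|_{\mathrm{DG}}$ via \eqref{eq:bounddeltataueh}, and the $\delta_\tau\bm{E}^{n+1}$ contribution is bounded by \eqref{eq:boundEn}--\eqref{eq:boundEngrad}. The pieces with $\Pi_h\bm{u}^{n-1}$ are controlled by \eqref{eq:stability_linf_pih} and \eqref{eq:vertiii}.

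For $B_3^n$ and $B_4^n$, I would follow the derivation of \eqref{eq:bd_a14}: use the discrete and continuous trace inequalities \eqref{eq:trace_ineq_continuous}--\eqref{eq:trace_ineq_discrete} together with \eqref{eq:inverse_estimate} to convert the face integrals into element norms of $\delta_\tau\bm{U}_h^{n+1}-\delta_\tau\bm{U}^{n+1}$ and $\delta_\tau\bm{U}_h^{n+1}$. Since $\delta_\tau\bm{U}^{n+1}\in H_0^1(\Omega)^d$, on $\partial\Omega$ the jump reduces to $\delta_\tau\bm{U}_h^{n+1}$ alone, producing precisely the $\|\delta_\tau\bm{U}_h^{n+1}\|_{\mathrm{DG}}^2$ summand of \eqref{eq:boundI11}. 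Collecting the four contributions, applying Young's inequality with parameter $\epsilon$, and using the regularity assumption $\bm{u}\in L^\infty(0,T;H^{k+1}(\Omega)^d)$ to absorb $|\bm{u}^{n-1}|_{H^{k+1}(\Omega)}$ into the constants produces \eqref{eq:boundI11}.

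The main technical obstacle will be the careful bookkeeping of powers of $h$ and $\tau$: one must balance the extra factor of $h$ gained from \eqref{eq:conv_delta_dual} against the losses $h^{-1}$ or $h^{-d/6}$ incurred when passing from $L^\infty$/$L^3$ norms of discrete functions to $L^2$ norms via inverse estimates, while simultaneously ensuring that the resulting $\|\delta_\tau\bm{e}_h^{n+1}\|$ factor is small enough to be absorbed by the $\epsilon\mu\|\delta_\tau\bm{e}_h^{n+1}\|^2$ term and that the $\tau|\delta_\tau\phi_h^{n+1}|_{\mathrm{DG}}$ factor appears with the prefactor $\tau^2(1+h^2/(\epsilon\mu))$ displayed in \eqref{eq:boundI11}; the splitting \eqref{eq:splitting_technique} is the only one compatible with this scaling.
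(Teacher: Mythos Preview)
Your proposal is correct and follows essentially the same route as the paper: the paper expands $\bar{a}_\mathcal{C}$ into four pieces $S_1^n$--$S_4^n$ (your $B_1^n$--$B_4^n$), splits $\bm{u}_h^{n-1}=\bm{e}_h^{n-1}+\Pi_h\bm{u}^{n-1}$, and invokes precisely \eqref{eq:conv_delta_dual}, \eqref{eq:bounddeltatauchi}, \eqref{eq:boundEn}--\eqref{eq:boundEngrad}, \eqref{eq:bounddeltataueh}, \eqref{eq:stability_linf_pih}, and the trace/inverse estimates as you outline. Two minor remarks: the $b$-term carries sign $+\tfrac12$ rather than $-\tfrac12$ (compare the definition of $\bar{a}_\mathcal{C}$ with \eqref{eq:form_b}), which is immaterial once absolute values are taken; and for the $\delta_\tau\bm{E}^{n+1}$ part of $S_2^n$ the paper does not split $\bm{u}_h^{n-1}$ but instead bounds $\|\bm{u}_h^{n-1}\|$ directly by the stability estimate in Lemma~\ref{lemma:first_err_estimate}, which is the origin of the $(1+1/\mu)$ factor in the last line of \eqref{eq:boundI11}.
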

\begin{proof}
The expression for $I^n_{1,1}$ reads: 
\begin{align}
&I^n_{1,1}  =  \sum_{E \in \mesh_h} \int_E (\bm{u}_h^{n-1} \cdot \nabla(\delta_\tau \bm{U}_h^{n+1} - \delta_\tau \bm{U}^{n+1})) \cdot (\delta_{\tau} \bm{E}^{n+1} + \delta_\tau \errn) \nonumber \\ & + \frac{1}{2} b(\bm{u}_h^{n-1}, (\delta_\tau \bm{U}_h^{n+1} - \delta_\tau \bm{U}^{n+1})\cdot (\delta_\tau \bm{E}^{n+1} +  \delta_\tau \errn)) \nonumber \\ 
&  + \sum_{E \in \mesh_h} \int_{\partial E_{-}^{\bm{u}_h^{n-1}}\backslash \partial \Omega} |\{ \bm{u}_h^{n-1} \} \cdot  \bm{n}_E | ((\delta_\tau \bm{U}_h^{n+1})^{\mathrm{int}} - (\delta_\tau \bm{U}_h^{n+1})^{\mathrm{ext}})\cdot (\delta_\tau \bm{E}^{n+1} + \delta_\tau \errn)^{\mathrm{ext}} \nonumber \\ 
&- \frac{1}{2} \sum_{e \in \partial \Omega} \int_{e} (| \bm{u}_h^{n-1} \cdot \bm{n}_e| -  \bm{u}_h^{n-1} \cdot \bm{n}_e) (\delta_\tau \bm{U}_h^{n+1})\cdot (\delta_\tau \bm{E}^{n+1} + \delta_\tau \errn) \nonumber 
 = \sum_{i=1}^4 S_i^n. 
\end{align}
We handle each term separately. For $S^n_1$, we have 
\begin{align*}
    S^n_1 =   \sum_{E \in \mesh_h} \int_E ((\bm{e}_h^{n-1} + \Pi_h \bm{u}^{n-1}) \cdot  \nabla  (\delta_\tau \bm{U}_h^{n+1} - \delta_\tau \bm{U}^{n+1})) \cdot (\delta_{\tau} \bm{E}^{n+1} + \delta_\tau \errn) . 
\end{align*}
We use Holder's inequality, \eqref{eq:discreter_poincare},  \eqref{eq:regularity_delta}, \eqref{eq:conv_delta_dual},  \eqref{eq:l3_bd_nabla_delta_Uh}, and \eqref{eq:stability_linf_pih}. 
\begin{align}
|S^n_1| & \leq  \|\bm{e}_h^{n-1}\|\|\nabla_h (\delta_\tau \bm{U}_h^{n+1} - \delta_\tau \bm{U}^{n+1})\|_{L^3(\Omega)}\|\delta_\tau \bm{E}^{n+1} + \delta_\tau \errn\|_{L^6(\Omega)} \nonumber \\ & \quad  +  \|\Pi_h \bm{u}^{n-1}\|_{L^{\infty}(\Omega)} \|\nabla_h  (\delta_\tau \bm{U}_h^{n+1} - \delta_\tau \bm{U}^{n+1})\| \| \delta_\tau \bm{E}^{n+1}+ \delta_\tau \errn\| \nonumber \\ & \leq C \|\bm{e}_h^{n-1}\|\|\delta_\tau \bm{\chi}^{n+1}\| (\|\delta_\tau \bm{E}^{n+1}\Vert_{\DG} + \Vert\delta_\tau \errn \|_{\DG}) \nonumber \\ & \quad + C  \vertiii{\bm{u}^{n-1}}h\|\delta_\tau \bm{\chi}^{n+1}\| (\|\delta_\tau \bm{E}^{n+1}\Vert  +\Vert \delta_\tau \errn \|). 
\end{align}
From \eqref{eq:update_velocity_delta} and \eqref{eq:boundformb}, note that 
\begin{align}
\|\delta_\tau \errn\| \leq \| \delta_\tau \bm{e}_h^{n+1} \| +C \tau |\delta_\tau \phi_h^{n+1}|_{\DG}. \label{eq:relating_errn_eh_deltaphi}
\end{align}
Hence,  with \eqref{eq:boundEn}, we obtain 
\begin{align}
& |S^n_1|\leq C \|\bm{e}_h^{n-1}\|\|\delta_\tau \bm{\chi}^{n+1}\| ( h^{k} \vert \delta_\tau \bm{u}^{n+1}\vert_{H^{k+1}(\Omega)}   
+ \Vert \delta_\tau \errn \|_{\DG}) \nonumber \\ &  + C  \vertiii{\bm{u}^{n-1}}h\|\delta_\tau \bm{\chi}^{n+1}\| (  h^{k+1} \vert \delta_\tau \bm{u}^{n+1}\vert_{H^{k+1}(\Omega)}  +\tau  | \delta_\tau \phi_h^{n+1} |_{\DG} + \|\delta_\tau \bm{e}_h^{n+1} \|). 
\end{align}
For $S^n_2$, we have 
\begin{align}
S^n_2 &=  \frac{1}{2} b(\bm{u}_h^{n-1}, (\delta_\tau \bm{U}_h^{n+1} - \delta_\tau \bm{U}^{n+1})\cdot \delta_\tau \bm{E}^{n+1} ) \nonumber \\ 
& + \frac{1}{2} b(\bm{u}_h^{n-1}, (\delta_\tau \bm{U}_h^{n+1} - \delta_\tau \bm{U}^{n+1})\cdot  \delta_\tau \errn)   = S^n_{2,1} + S^n_{2,2}. 
\end{align}
The term $S^n_{2,1}$ reads: 
\begin{align}
S^n_{2,1} & = \frac{1}{2} \sum_{E \in \mesh_h} \int_E (\nabla \cdot \bm{u}_h^{n-1}) (\delta_\tau \bm{U}_h^{n+1} - \delta_\tau \bm{U}^{n+1})\cdot \delta_\tau \bm{E}^{n+1} \nonumber \\ & \quad - \frac{1}{2}\sum_{e \in \Gamma_h \cup \partial \Omega} \int_e \{(\delta_\tau \bm{U}_h^{n+1} - \delta_\tau \bm{U}^{n+1})\cdot \delta_\tau \bm{E}^{n+1}\} [ \bm{u}_h^{n-1}] \cdot \bm{n}_e 
\end{align}
We use Holder's inequality, trace inequalities \eqref{eq:trace_ineq_continuous}-\eqref{eq:trace_ineq_discrete}, and inverse estimates \eqref{eq:inverse_estimate_dG}. We obtain: 
\begin{align}
&|S^n_{2,1}|  \leq C h^{-1}\|\bm{u}_h^{n-1}\| \| \delta_\tau \bm{U}_h^{n+1} - \delta_\tau \bm{U}^{n+1}\| \|\delta_\tau \bm{E}^{n+1}\|_{L^\infty(\Omega)} \nonumber \\ 
& + C\|\delta_\tau \bm{E}^{n+1} \|_{L^\infty(\Omega)}(h^{-1} \| \delta_\tau \bm{U}_h^{n+1} - \delta_\tau \bm{U}^{n+1}\| + \|  \nabla_h(\delta_\tau \bm{U}_h^{n+1} - \delta_\tau \bm{U}^{n+1})\|)\|\bm{u}_h^{n-1}\|.   \nonumber
\end{align}
Similar arguments to \eqref{eq:error_dg_aux} yield: 
\begin{equation}
    \|\delta_\tau \bm{U}_h^{n+1} - \delta_\tau \bm{U}^{n+1}\| \leq Ch^2\|\delta_\tau \bm{\chi}^{n+1}\| \nonumber
\end{equation}
With \eqref{eq:conv_delta_dual}, the above bound, and the stability result in Lemma \ref{lemma:first_err_estimate}, we obtain 
\begin{multline}
|S^n_{2,1}|\leq C (1+1/\mu)^{1/2} h \|\delta_\tau \bm{\chi}^{n+1}\|\|\delta_\tau \bm{E}^{n+1} \|_{L^{\infty}(\Omega)} \\ \leq C (1+1/\mu)^{1/2} h \|\delta_\tau \bm{\chi}^{n+1}\| \|\delta_\tau \bm{u}^{n+1} \|_{H^2(\Omega)}. 
\end{multline}
The expression for $S_{2,2}$ is: 
\begin{align}
S^n_{2,2} &= \frac{1}{2} \sum_{E \in \mesh_h} \int_E (\nabla \cdot \bm{u}_h^{n-1}) (\delta_\tau \bm{U}_h^{n+1} - \delta_\tau \bm{U}^{n+1})\cdot \delta_\tau \errn \nonumber \\ & \quad  - \frac{1}{2}\sum_{e \in \Gamma_h \cup \partial \Omega} \int_e \{(\delta_\tau \bm{U}_h^{n+1} - \delta_\tau \bm{U}^{n+1})\cdot \delta_\tau \errn \} [ \bm{u}_h^{n-1}] \cdot \bm{n}_e. 
 \end{align}
With Holder's, trace estimates \eqref{eq:trace_ineq_continuous}-\eqref{eq:trace_ineq_discrete}, and inverse inequalities \eqref{eq:inverse_estimate}, we obtain 
\begin{align}
    &|S^n_{2,2}| \leq C(h^{-1} \| \bm{e}_h^{n-1}\|+\|\nabla_h \Pi_h \bm{u}^{n-1}\|) \| \delta_{\tau} \bm{U}_h^{n+1} - \delta_\tau \bm{U}^{n+1}\|_{L^3(\Omega)} \| \delta_\tau \errn\|_{L^6(\Omega)} \nonumber \\ &   + C(h^{-d/6}\|\bm{e}_h^{n-1}\| + \|\Pi_h \bm{u}^{n-1}\|_{L^3(\Omega)})(h^{-1}\|\delta_{\tau} \bm{U}_h^{n+1} - \delta_\tau \bm{U}^{n+1}\|\nonumber \\ & \quad   + \| \nabla_h (\delta_{\tau} \bm{U}_h^{n+1} - \delta_\tau \bm{U}^{n+1})\|) \| \delta_\tau \errn\|_{L^6(\Omega)} \nonumber 
\end{align}
With Poincare's inequality, we have 
\[
\| \delta_{\tau} \bm{U}_h^{n+1} - \delta_\tau \bm{U}^{n+1}\|_{L^3(\Omega)} 
\leq C \| \delta_{\tau} \bm{U}_h^{n+1} - \delta_\tau \bm{U}^{n+1}\|_{\DG}
\leq C h \| \delta_\tau \bm{\chi}^{n+1}\|  
\]
Hence, with \eqref{eq:discreter_poincare} and \eqref{eq:conv_delta_dual}, we have 
\begin{align}
| S^n_{2,2} | & \leq C \|\bm{e}_h^{n-1}\|\| \delta_\tau \bm{\chi}^{n+1}\| \|\delta_\tau \errn \|_{\DG} \nonumber\\  & \quad + C (|\bm{u}^{n-1}|_{H^1(\Omega)} + \Vert \bm{u}^{n-1} \Vert_{W^{1,3}(\Omega)}) h \|\delta_\tau \bm{\chi}^{n+1}\|\|\delta_\tau \errn\|_{\DG}. 
\end{align}
With the assumption that $\bm{u} \in L^{\infty}(0,T;H^{k+1}(\Omega)^d)$, we obtain a bound for $S^n_2$: 
\begin{multline}
    |S^n_2| \leq C (\|\bm{e}_h^{n-1}\| +  h )\|\delta_\tau \bm{\chi}^{n+1}\|\|\delta_\tau \errn \|_{\DG} \\ + C\left(1+1/\mu\right)^{1/2} h \| \delta_{\tau} \bm{\chi}^{n+1}\|  \|\delta_\tau \bm{u}^{n+1}\|_{H^2(\Omega)} .
\end{multline}
For $S^n_3$, we use the definition \eqref{eq:inflow_boundary} and write $\bm{u}_h^{n-1} = \bm{e}_h^{n-1} + \Pi_h \bm{u}^{n-1}$. We also use that $[\delta_\tau \bm{U}^{n+1}] = \bm{0}$ a.e on $e \in \Gamma_h \cup \partial \Omega$.   
Hence, we bound $S^n_{3}$ as follows. 
 \begin{align}
& |S^n_{3}| \leq C \|\bm{e}_h^{n-1}\|_{L^3(\Omega)} \Vert \delta_\tau \bm{U}_h^{n+1} - \delta_\tau \bm{U}^{n+1} \Vert_{\DG} \|\delta_{\tau } \errn\|_{L^6(\Omega)}   \nonumber \\ &  + C \|\delta_\tau \bm{E}^{n+1}\|_{L^{\infty}(\Omega)} \|\bm{e}_h^{n-1}\|\|\delta_\tau \bm{U}_h^{n+1}\|_{\DG}   \nonumber \\ & + C\|\Pi_h \bm{u}^{n-1} \|_{L^\infty(\Omega)} \| \delta_\tau \bm{U}_h^{n+1} \|_{\DG} (\| \delta_\tau \bm{E}^{n+1}\| + h\|\nabla_h (\delta_\tau \bm{E}^{n+1})\|+ \| \delta_\tau \errn\|). 
\end{align}
With \eqref{eq:boundEn}, \eqref{eq:boundEngrad}, \eqref{eq:relating_errn_eh_deltaphi}, \eqref{eq:inverse_estimate},  \eqref{eq:discreter_poincare}, \eqref{eq:stability_linf_pih}, \eqref{eq:approximation_prop_1}-\eqref{eq:approximation_prop_2},  and \eqref{eq:conv_delta_dual}, we obtain  
\begin{align}
&|S^n_3|  \leq C \|\bm{e}_h^{n-1} \| \| \delta_\tau \bm{\chi}^{n+1} \| \|\delta_\tau \errn\|_{\DG} +C \Vert \delta_\tau \bm{u}^{n+1} \Vert_{H^2(\Omega)} \|\bm{e}_h^{n-1}\|\|\delta_\tau \bm{U}_h^{n+1}\|_{\DG} \nonumber \\ &  + C h^{k+1} \vertiii{\bm{u}^{n-1}} \|\delta_{\tau } \bm{U}_h^{n+1} \|_{\DG} |\delta_\tau \bm{u}^{n+1} |_{H^{k+1}(\Omega)}\nonumber \\ & + C \vertiii{\bm{u}^{n-1}} \| \delta_\tau \bm{U}_h^{n+1}\|_{\DG}( \| \delta_\tau \bm{e}_h^{n+1} \| + \tau |\delta_\tau \phi_h^{n+1}|_{\DG}).
\end{align}
The term $S^n_4$ is treated in a similar way to $S^n_3$ and the above bound also holds for $S^n_4$. We now combine the above bounds, use the assumption that $\bm{u} \in L^{\infty}(0,T; H^{k+1}(\Omega))$  and obtain: 
\begin{align}
& |I^n_{1,1}| \leq C  \|\bm{e}_h^{n-1}\|\| \delta_\tau \bm{\chi}^{n+1} \|( h^k \vert \delta_\tau \bm{u}^{n+1}\vert_{H^{k+1}(\Omega)}  + \| \delta_\tau \errn \|_{\DG}) \nonumber \\
& + C h \| \delta_\tau \bm{\chi}^{n+1}\|( h^{k+1} \vert \delta_\tau \bm{u}^{n+1}\vert_{H^{k+1}(\Omega)}   
 +  \| \delta_\tau \bm{e}_h^{n+1}\|) \nonumber \\ 
& +C h\| \delta_\tau \bm{\chi}^{n+1}\| (\| \delta_\tau \errn\|_{\DG} + (1+ 1/\mu)^{1/2}
\Vert \delta_\tau\bm{u}^{n+1}\Vert_{H^2(\Omega)} + \tau |\delta_\tau \phi_h^{n+1}|_{\DG})  \nonumber \\&+ C \|\delta_\tau \bm{U}_h^{n+1}\|_{\DG}(\|\bm{e}_h^{n-1}\| \Vert \delta_\tau\bm{u}^{n+1}\Vert_{H^2(\Omega)}  + \| \delta_\tau \bm{e}_h^{n+1}\| \nonumber \\ & \quad  + \tau |\delta_\tau \phi_h^{n+1}|_{\DG} + h^{k+1}| \delta_\tau \bm{u}^{n+1}|_{H^{k+1}(\Omega)}). \nonumber
    \end{align}
Bound \eqref{eq:boundI11} is obtained by using \eqref{eq:bounddeltatauchi} and applying Young's inequality in the above bound . 
\end{proof}
    \bibliographystyle{spmpsci}      
    \bibliography{references}   

    \end{document}